\author{Thomas John Baird}
\title{ Moduli spaces of flat SU(2)-bundles over nonorientable surfaces}
\newtheorem{thm}{Theorem}[section]
\newtheorem{cor}[thm]{Corollary}
\newtheorem{lem}[thm]{Lemma}
\newtheorem{prop}[thm]{Proposition}
\theoremstyle{definition}
\newtheorem{define}{Definition}
\newtheorem{rmk}{Remark}
\newcommand{\lie}[1]{\mathfrak{#1}}
\newcommand{\im}{\mathrm{im}}
\newcommand{\id}{\mathbbmss{1}}
\newcommand{\Z}{\mathbb{Z}}
\newcommand{\R}{\mathbb{R}}
\newcommand{\C}{\mathbb{C}}
\newcommand{\Q}{\mathbb{Q}}
\newcommand{\co}{:}
\newcommand{\ti}[1]{\tilde{#1}}
\newcommand{\gau}{\mathcal{G}}
\begin{document}


\maketitle

\begin{abstract}
We study the topology of the moduli space of flat $SU(2)$-bundles over a \emph{non}orientable surface $\Sigma$.  This moduli space may be identified with the space of homomorphisms $Hom(\pi_1(\Sigma),SU(2))$ modulo conjugation by $SU(2)$. In particular, we compute the (rational) equivariant cohomology ring of $Hom(\pi_1(\Sigma),SU(2))$ and use this to compute the ordinary cohomology groups of the quotient $Hom(\pi_1(\Sigma),SU(2))/SU(2)$. A key property is that the conjugation action is equivariantly formal.
\end{abstract}








\section{Introduction}\label{resout}

Let $G$ be a compact, simply connected Lie group and let $\Sigma$ be a compact surface without boundary. Denote by $\mathcal{A}$ the space of connections on the trivial $G$-bundle over $\Sigma$. The \textbf{moduli stack of flat $G$-bundles over $\Sigma$} is the topological quotient stack $[\mathcal{A}_{flat} / \mathcal{G}]$, where $\mathcal{A}_{flat} \subset \mathcal{A}$ is the subspace of flat connections and $\mathcal{G} \cong C^{\infty}(\Sigma, G)$ is the gauge group. We call the orbit space $\mathcal{A}_{flat} / \mathcal{G}$ the \textbf{coarse moduli space of flat $G$-bundles over $\Sigma$}. In this paper we compute the cohomology of $[\mathcal{A}_{flat} / \mathcal{G}]$ and $\mathcal{A}_{flat} / \mathcal{G}$ with rational coefficients in the case $G=SU(2)$.

For orientable $\Sigma$, the most successful approach to studying the topology of $[\mathcal{A}_{flat} / \mathcal{G}]$ was initiated by Atiyah-Bott \cite{ab2}. The Yang-Mills functional, $YM:\mathcal{A} \rightarrow \R$, achieves its minimum on $\mathcal{A}_{flat}$, and by using Morse theory they are able to prove that the induced map:

\begin{equation}\label{grab}
H^*_{\mathcal{G}}(\mathcal{A}) \rightarrow H^*_{\mathcal{G}}(\mathcal{A}_{flat})
\end{equation}
is surjective (unless otherwise stated, coefficients are assumed to be rational). Since $\mathcal{A}$ is a contractible space, $H^*_{\mathcal{G}}(\mathcal{A}) = H^*(B \mathcal{G})$ which is a freely generated (super)commutative ring providing a convenient set of generators for $H_{\mathcal{G}}(\mathcal{A}_{flat})$. Most subsequent work has revolved around trying to understand the relations between generators or, equivalently, the kernel of (\ref{grab}).

Yang-Mills Morse theory has also been used to study the topology of $[\mathcal{A}_{flat}/\mathcal{G}]$ for nonorientable $\Sigma$. Morse inequalities and the number of connected components were obtained by Ho-Liu \cite{hl} \cite{hl2} and the stable homotopy under the rank of $G$ was determined by Ramras \cite{ram}. Nevertheless, the Morse theory approach is less effective for nonorientable $\Sigma$. In \S \ref{morgau} we prove

\begin{thm}\label{ale}
For nonorientable $\Sigma$, the map
\begin{equation}
H_{\gau}(\mathcal{A}) \rightarrow H_{\gau}(\mathcal{A}_{flat})
\end{equation}
is injective. 
\end{thm}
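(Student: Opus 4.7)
The plan is to run the Yang-Mills Morse theory of Atiyah-Bott in the nonorientable setting. The Yang-Mills functional $YM\co\mathcal{A}\to\R$ is $\gau$-invariant, and its critical sets give a $\gau$-equivariant Morse-Bott stratification $\mathcal{A}=\bigsqcup_\mu X_\mu$ with $\mathcal{A}_{flat}=X_0$ the minimum. Filtering by the sublevel sets $U_{\le c}$, the restriction in the theorem factors through each transition $H^*_\gau(U_{\le c_\mu})\to H^*_\gau(U_{<c_\mu})$, so injectivity of the composite reduces to injectivity at each step.

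At every positive critical level there is a Thom-Gysin long exact sequence
\begin{equation*}
\cdots\to H^{*-\lambda_\mu}_\gau(X_\mu;\mathcal{L}_\mu)\xrightarrow{(i_\mu)_!} H^*_\gau(U_{\le c_\mu})\to H^*_\gau(U_{<c_\mu})\to\cdots
\end{equation*}
where $\lambda_\mu$ is the Morse index and $\mathcal{L}_\mu$ is the orientation local system of the negative normal bundle $\nu_\mu^-$. The pushforward $(i_\mu)_!$ is cup product with the (twisted) equivariant Euler class $e(\nu_\mu^-)\in H^{\lambda_\mu}_\gau(X_\mu;\mathcal{L}_\mu)$, so injectivity at $\mu$ is equivalent to the vanishing of this Euler class. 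In the orientable setting the negative normal bundles are canonically complex, $\mathcal{L}_\mu$ is trivial, and $e(\nu_\mu^-)$ is a nonzero top Chern class; this makes $(i_\mu)_!$ injective, yielding the Atiyah-Bott surjectivity in (\ref{grab}).

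The key step — and the main obstacle — is to show that for nonorientable $\Sigma$ and $G=SU(2)$, the Euler class $e(\nu_\mu^-)$ vanishes for every $\mu>0$. I would use Ho-Liu's description of the Yang-Mills strata on a nonorientable surface, in which each $X_\mu$ corresponds to an unstable reduction of the associated rank-2 bundle along a torus subgroup. The nontrivial class $w_1(\Sigma)$ should obstruct the complex structure on $\nu_\mu^-$ that is present in the orientable case, leaving instead either a nowhere-vanishing equivariant section of $\nu_\mu^-$ or an involution acting by $-1$ on the fibers of $\mathcal{L}_\mu$; either option forces the twisted equivariant Euler class to be zero (the second, rationally). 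Carrying out this local-system calculation stratum by stratum is where the bulk of the work lies. Once done, the theorem follows by iterating the Thom-Gysin argument from $\mathcal{A}_{flat}$ up to $\mathcal{A}$.
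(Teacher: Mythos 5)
There is a genuine gap at the core of your plan. By exactness of the Thom--Gysin sequence, injectivity of the restriction $H^*_{\gau}(U_{\le c_\mu})\rightarrow H^*_{\gau}(U_{<c_\mu})$ is equivalent to the vanishing of the pushforward $(i_\mu)_!$ itself; what cup product with $e(\nu_\mu^-)$ computes is only the composite $i_\mu^*\circ (i_\mu)_!$. When $e(\nu_\mu^-)$ is a nonzero divisor this gives injectivity of $(i_\mu)_!$ and hence the Atiyah--Bott surjectivity in (\ref{grab}), but in the direction you need the implication fails: $e(\nu_\mu^-)=0$ only says that the image of $(i_\mu)_!$ restricts to zero on the stratum, not that $(i_\mu)_!=0$, so your claimed equivalence (``injectivity at $\mu$ is equivalent to the vanishing of this Euler class'') is false, and the stratum-by-stratum reduction does not yield Theorem \ref{ale} even if the Euler classes do vanish. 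Moreover, the vanishing itself is left as a hoped-for computation (``should obstruct\dots'', ``either option forces\dots''), i.e.\ the step you identify as the bulk of the work is precisely what is missing; and there are further unaddressed points (infinitely many Yang--Mills strata and the passage to the limit, and the fact that over nonorientable $\Sigma$ the negative bundles need not be orientable, so the twisted-coefficient bookkeeping has to be done honestly). To salvage a Morse-theoretic proof you would have to show directly that every Gysin map $(i_\mu)_!$ vanishes, a much stronger statement than $e(\nu_\mu^-)=0$.

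The paper's proof is entirely different and avoids Yang--Mills Morse theory --- indeed the point of Theorem \ref{ale} is that the Atiyah--Bott mechanism is unavailable here. One removes a M\"obius band neighbourhood of the loop representing $a_0$ to obtain a subsurface $\Sigma^o_n\subset\Sigma_n$ homotopy equivalent to a wedge of $n$ circles, whose inclusion induces an isomorphism on rational cohomology and hence $H(B\gau^o)\cong H(B\gau)$ by the Atiyah--Bott argument; since $\mathcal{A}^{flat}_{\Sigma^o}$ is equivariantly modelled on $Hom(F_n,G)=G^n$, the map in question becomes, via holonomy, the map $\rho^*\colon H_G(G^n)\rightarrow H_G(X_n(+\id))$ induced by forgetting the zeroth coordinate. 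Its injectivity is exactly Proposition \ref{somemanyprops}: $\rho$ is a strong cohomological $\Z_2$-Galois cover, so $H(G^n)\cong H(X_n(+\id))^{\Z_2}\subset H(X_n(+\id))$ (with Remark \ref{MIA} covering general compact simply connected $G$). So the intended route is finite-dimensional and elementary, resting on the $\Z_2$-cover structure of $X_n(\epsilon)\rightarrow G^n$ rather than on any property of the Yang--Mills stratification.
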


Thus for nonorientable $\Sigma$ the Atiyah-Bott strategy does not work. Fortunately, the stack $[\mathcal{A}_{flat}/\gau]$ has other nice properties that compensate for the failure of Yang-Mills Morse theory.

These properties are best understood using a finite dimensional model for the moduli stack. Let $\pi = \pi_1(\Sigma)$ denote the fundamental group of the surface and let $Hom(\pi, G)$ denote the set of group homomorphisms from $\pi$ to $G$. This set carries the structure of a finite dimensional real algebraic variety and is acted upon by $G$ via conjugation. Holonomy describes a morphism of spaces $hol: \mathcal{A}_{flat} \rightarrow Hom(\pi, G)$ which determines an isomorphism of topological stacks:

$$ [\mathcal{A}_{flat}/ \mathcal{G}] \cong [Hom(\pi, G)/G].$$
In particular, we have an isomorphism of equivariant cohomology rings $H_{\mathcal{G}}(\mathcal{A}_{flat}) \cong H_G(Hom(\pi, G))$.

\subsection{Summary of results}

From this point onwards, set $G= SU(2)$ and $T \subset G$ the maximal torus of diagonal matrices, so $T \cong U(1)$.

Recall that every compact nonorientable surface without boundary is isomorphic to a connected sum of real projective planes: $$ \Sigma_n := (\R P^2)^{\#n+1}$$
The fundamental group $\pi_1(\Sigma_n)$ is  has presentation:

$$ \pi_1(\Sigma_n) = \{ a_0,..., a_n| \prod_{i=0}^n a_i^2 = \id\}.$$
A homomorphism in $Hom(\pi_1(\Sigma_n), G)$ is determined by where it sends the generators, so we may identify $$Hom(\pi_1(\Sigma_n),G) \cong X_n(\id),$$ where

$$ X_n(\id) := \{ (g_0,...,g_n) \in G^{n+1} |  \prod_{i=0}^n g_i^2 = \id\}.$$
Indeed, for $\mathcal{C} \subset G$ a conjugacy class, we will consider more generally varieties 

\begin{equation}\label{inchty}
X_n(\mathcal{C}) := \{(g_0,...,g_n) \in G^{n+1} |  \prod_{i=0}^n g_i^2 \in \mathcal{C}\}.
\end{equation}
The group $G$ acts on $X_n(\mathcal{C})$ via conjugation.   
The quotient stack $[X_n(\mathcal{C})/G]$ also has a gauge theoretic interpretation as the moduli space of flat connections over a punctured $\Sigma_n$ with restricted holonomy around the puncture. 

Except for the central elements $\pm \id$, all conjugacy classes are isomorphic as $G$-spaces to the homogeneous space $G/T \cong S^2$. We prove in \S \ref{Regular and singular} that as $G$-spaces $$ X_n(\mathcal{C}) \cong G/T \times X_n( (-\id)^n)$$ under the diagonal action, and consequently we focus our attention on the two special cases $X_n(\pm \id)$.

One of the main results of this paper is:
\begin{thm}\label{formalityresult}
Under the restricted action by $T \subset G$, $X_n(\pm \id)$ is equivariantly formal for all $n \in \Z_{\geq 0} $. In particular, as a graded vector space $H_T^*(X_n(\pm \id)) \cong H^*(X_n(\pm \id)) \otimes H^*(BT)$, and we get an injection:

\begin{equation}\label{firstup}
 i^*: H_T^*(X_n(\pm \id)) \hookrightarrow H_T^*(X_n(\pm \id)^T)
\end{equation}
where $i: X_n(\pm \id)^T \hookrightarrow X_n( \pm \id)$ denotes the inclusion of the $T$-fixed points.
\end{thm}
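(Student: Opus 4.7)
The plan is to induct on $n$. The base case $n=0$ is immediate: $X_0(\id) = \{\pm\id\}$ is two $T$-fixed points (trivially equivariantly formal), and $X_0(-\id) = \{g \in SU(2) : g^2 = -\id\}$ is the conjugacy class of order-$4$ elements in $SU(2)$, which is $G$-equivariantly isomorphic to $G/T \cong S^2$; the restricted $T$-action is the standard rotation on $S^2$, whose equivariant formality is classical.

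For the inductive step, I would construct a $T$-invariant Morse-Bott function $f : X_n(\pm\id) \to \R$ whose critical submanifolds admit a description in terms of the lower-rank spaces $X_m(\pm\id)$ with $m<n$, or products of such with the homogeneous space $G/T$ arising from the quoted splitting $X_m(\mathcal{C}) \cong G/T \times X_m((-\id)^m)$ for regular conjugacy classes $\mathcal{C}$. A natural first candidate is a trace-in-one-coordinate function such as $f(g_0,\ldots,g_n) = -\tfrac{1}{2}\mathrm{Re}\,\mathrm{tr}(g_n)$, which is automatically $T$-invariant under conjugation; Lagrange-multiplier analysis on the constraint $\prod g_i^2 = \pm\id$ should localize the critical set to strata where $g_n$ is central (leaving the other coordinates to lie in some $X_{n-1}(\pm\id)$) or where $g_n$ is of a specific regular form (contributing a $G/T$-factor). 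Given such a description, one invokes Kirwan's equivariant perfection criterion---essentially $T$-orientability of the negative normal bundles at the critical sets---which, together with the inductive equivariant formality of the critical loci and of $G/T$, propagates equivariant formality to $X_n(\pm\id)$.

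The main obstacle I foresee is that the constraint map $s(g_0,\ldots,g_n) = \prod g_i^2$ is singular precisely at the central values $\pm\id$---which is also why the splitting holds only for regular $\mathcal{C}$---so $X_n(\pm\id)$ is not a smooth manifold and ordinary Morse-Bott theory does not apply directly. A reasonable workaround is to first establish equivariant formality for the smooth regular fibers $X_n(\mathcal{C})$ (where the splitting reduces the question to $X_n((-\id)^n)$ together with the already-formal factor $G/T$), and then pass to $X_n(\pm\id)$ by a degeneration argument as $\mathcal{C}$ approaches a central value, or alternatively by a stratified Morse-theoretic argument carried out directly on $X_n(\pm\id)$. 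Once equivariant formality has been established, the injection (\ref{firstup}) is the usual consequence of Borel's localization theorem applied to the compact circle $T$ acting on the compact space $X_n(\pm\id)$.
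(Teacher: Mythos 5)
Your base cases are fine, and your closing remark that formality yields the injection \eqref{firstup} via Borel localization matches the paper. The gap is in the inductive step, which is exactly where the content lies. The function you propose (trace of one coordinate) does appear in the paper (Lemma \ref{iminyork}), but its critical locus on the smooth variety $X_n^r = X_n((-\id)^n)$ consists of the two whole fibres over the trace values $\pm 2$, and each of these is a copy of the \emph{singular} variety $X_{n-1}^s = X_{n-1}((-\id)^{n})$; there is no regular stratum contributing a $G/T$-factor. So the critical sets are not submanifolds, the function is not Morse--Bott, and Kirwan's perfection criterion (negative normal bundles, equivariant Euler classes) does not apply as stated. (Also note that only one of the two central fibres is singular: by Proposition \ref{singu} the unique singular value of $\Box_n$ is $(-\id)^{n+1}$, so $X_n((-\id)^n)$ is smooth.) The paper moreover shows (Proposition \ref{lettieri} and the remark following it) that $H_2(X_n^r;\Z)$ has $2$-torsion for $n\geq 2$ while the fixed point set is torsion-free, which already rules out the most natural Morse-theoretic route; any Morse-type argument must contend with this. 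Your proposed workarounds do not close the gap: reducing to the regular fibres is circular, since $X_n(\mathcal{C}) \cong G/T \times X_n^r$ contains precisely the unproven factor $X_n^r$; and a degeneration of $\mathcal{C}$ to the singular central value genuinely changes the topology (the Betti numbers of $X_n^s$ and $X_n^r$ differ, Theorem \ref{onepoly}), so nothing is offered to show formality survives the limit. The stratified-Morse alternative is not developed at all.

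For comparison, the paper avoids Morse theory entirely and reduces Theorem \ref{formalityresult} to a count of total Betti numbers: it proves $\dim H(X_n(\pm\id)) = 2^{n+1}$ by induction, using (i) Durfee's rug-function theorem plus Poincar\'e--Lefschetz duality to identify $H^*(G^{n+1},X_n^r)$ with $H_*(X_n^s)$, (ii) the long exact sequence of the pair $(X_n^r,\phi^{-1}(2))$ for your same trace function, used only topologically, (iii) the identification $H(X_n(\epsilon))^{\Z_2}\cong H(G^n)$ from the cohomological Galois cover $\rho: X_n(\epsilon)\to G^n$ (Proposition \ref{somemanyprops}), and crucially (iv) Borel's inequality $\dim H(X)\geq \dim H(X^T)$ (Proposition \ref{forcrit}) to force the exact sequences to split (Lemma \ref{ineqqq}). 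Equivariant formality is then exactly the equality case $\dim H(X_n(\pm\id)) = \dim H(X_n(\pm\id)^T) = 2^{n+1}$ of that same inequality. If you want to salvage your outline, aim for this dimension count rather than equivariant perfection: you do not need a perfect $T$-invariant Morse function, only the total Betti number of $X_n(\pm\id)$ and of its fixed point set $T^n\times\Z_2$.
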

In fact, $T$-equivariant formality is equivalent to $G$-equivariant formality for a $G$-space, so it is also true that

$$ H_G^*(X_n(\pm \id)) \cong H^*(X_n(\pm \id)) \otimes H^*(BG)$$
as an $H^*(BG)$-module and we we have a natural isomorphism of rings $H_G^*(X_n(\pm \id)) \cong H_T^*(X_n(\pm \id))^W$, where $W\cong \Z_2$ is the Weyl group. We choose to work with the restricted $T$-action in order to exploit the localization theorem of Borel, which relates a $T$-space to its fixed point set.

Theorem \ref{formalityresult} is the crucial result that makes the remaining computations tractable. It is somewhat surprising, because the analogous statement is not true for orientable surfaces, where in many cases the action is actually free.

We prove Theorem \ref{formalityresult} by using an induction argument on $n$ to compute $H^*(X_n(\pm \id))$, and then using the criterion that a (reasonable) compact $T$-space $X$ is equivariantly formal if and only if $\dim H^*(X) = \dim H^*(X^T)$ (see Proposition \ref{forcrit}).

The variety $X_n(\pm\id)$ possesses an action by the centre $ Z(SU(2)) \cong \Z_2$ by multiplying (say) the 0th factor of an (n+1)-tuple. This makes sense because elements of $Z(SU(2))$ square to $\id$ and so leave the defining relation (\ref{inchty}) invariant. Our study of the localization map uncovers the following interesting cohomological identity, which is an isomorphism of graded algebras:

\begin{equation}\label{reduc}
H_T^*( X_{n+1}(\epsilon)) \cong H_T^*(X_n(-\epsilon) \times_{\Z_2} X_1(-\id)) \text{,    $\epsilon = \pm \id$}
\end{equation}
where the right side is the orbit space by the aforementioned $\Z_2$-action acting diagonally. Notice that applying (\ref{reduc}) inductively we may express the cohomology of $X_{n}(\epsilon)$ in terms of the $n=1$ case (i.e. the Klein bottle). Equation (\ref{reduc}) does not come from a homeomorphism of spaces, because the cohomology isomorphism does not hold integrally, and we have so far been unsuccessful in producing (\ref{reduc}) via a map of spaces. In future work we intend to produce evidence that (\ref{reduc}) holds when $SU(2)$ is replaced by an arbitrary compact connected Lie group, under suitable genericity conditions.

Another interesting result is the presence of a bigrading on $H_T^*(X_n(\pm \id))$.
Recall that for the $T$-fixed point set, we have a canonical isomorphism of rings $$H^*_T( X_n(\pm \id)^T) \cong H^*(X_n(\pm \id)^T)\otimes H^*(BT)$$ so $H_T^*( X_n(\pm \id)^T)$ inherits a bigrading in a natural way. In \S \ref{local}, we compute the image of the localization map (\ref{firstup}) and discover that the localization map is compatible with this bigrading, i.e.

\begin{cor}\label{bigradient}
For $n \geq 0$ and $\epsilon = \pm \id$, the equivariant cohomology of the representation variety $H_T^*(X_n(\epsilon))$ possesses a bigrading for which $$i^*: H_T^*(X_n(\epsilon)) \rightarrow H^*(X_n(\epsilon)^T) \otimes H^*(BT)$$ is a morphism of bigraded algebras. Furthermore, the bigrading descends to one on ordinary cohomology $H^*(X_n(\epsilon))$.
\end{cor}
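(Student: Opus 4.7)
The plan is to transport a natural bigrading from the fixed-point side across the injection of Theorem~\ref{formalityresult}, and then to verify that the ideal cutting out ordinary cohomology from equivariant cohomology is itself bigraded.

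First, I would equip the target $H_T^*(X_n(\epsilon)^T) \cong H^*(X_n(\epsilon)^T) \otimes H^*(BT)$ with the tensor-product bigrading that places $H^p(X_n(\epsilon)^T) \otimes H^q(BT)$ in bidegree $(p,q)$. This bigrading is multiplicative because products factor separately through each tensor factor, and the total degree $p+q$ recovers the usual cohomological degree.

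The central step is to show that the image of the localization map $i^*$ is a bi-homogeneous subring of $H^*(X_n(\epsilon)^T) \otimes H^*(BT)$. This is exactly where the explicit work of \S\ref{local} feeds in: there a concrete set of ring generators for $i^*(H_T^*(X_n(\epsilon)))$ is exhibited, and by direct inspection each such generator turns out to be bi-homogeneous with respect to the tensor bigrading. Since the image is the subring generated by these elements and the bigrading is multiplicative, the image is automatically a bigraded subring. The bigrading then pulls back along the injection $i^*$ of Theorem~\ref{formalityresult} to endow $H_T^*(X_n(\epsilon))$ with a bigrading for which $i^*$ is, by construction, a morphism of bigraded algebras.

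For the descent to ordinary cohomology, I would invoke equivariant formality to identify $H^*(X_n(\epsilon)) \cong H_T^*(X_n(\epsilon))/J$, where $J$ is the ideal generated by the image of $H^{>0}(BT)$ under the structure map $H^*(BT) \to H_T^*(X_n(\epsilon))$. Composing with $i^*$, this structure map lands in $1 \otimes H^*(BT) \subset H^*(X_n(\epsilon)^T) \otimes H^*(BT)$, so the generators of $J$ live in bidegrees $(0,q)$ with $q>0$. Hence $J$ is a bigraded ideal, and the quotient $H^*(X_n(\epsilon))$ inherits a bigrading from $H_T^*(X_n(\epsilon))$.

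The main obstacle is the bi-homogeneity check for the generators of the image of $i^*$. Once the explicit generators produced in \S\ref{local} are available, however, this reduces to a finite computation rather than a structural argument, and the rest of the corollary then follows formally by transport of structure and by the fact that elements of $H^*(BT)$ are concentrated in the second factor of the bigrading.
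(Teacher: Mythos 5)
Your proposal is correct and follows essentially the same route as the paper: the paper computes the image of $i^*$ explicitly in \S\ref{local} (Propositions \ref{razersnik3}, \ref{razersnik}, \ref{razersnik2}) as a visibly bi-homogeneous subspace of $H^*(X_n(\epsilon)^T)\otimes H^*(BT)$, pulls the bigrading back along the injective localization map, and descends to ordinary cohomology by noting that the kernel $c_1 H_T^*(X_n(\epsilon))$ of the map to $H^*(X_n(\epsilon))$ (Lemma \ref{LHirsh}) is generated by an element of bidegree $(0,2)$ and hence is a bigraded ideal.
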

Under this bigrading, (\ref{reduc}) holds as bigraded rings. In a future paper we intend to show that the second grading can also be obtained by representing $H(X_n(\epsilon))$ as a module for the $Ext$-algebra of a contructible sheaf complex over $SU(2)$.

In \S \ref{chapter7} we compute the ordinary, rational cohomology of the orbit spaces $X_n(\mathcal{C})/G$ and the compactly supported cohomology of its smooth locus. The technique we use is borrowed from Cappell-Lee-Miller \cite{clm}, and can be applied to any reasonable equivariantly formal $SU(2)$-space.  We are able to almost, but not completely, determine the cup product structure, which we reduce to a Hochschild extension problem.




The outline of the paper is as follows. In \S \ref{locusoffixedpoints} we study the $T$-fixed point sets of $X_n(\pm \id)$, emphasizing properties that will remain true for the full variety. In \S \ref{Cohomological orbit spaces} we introduce the notion of a cohomological Galois cover and of a cohomological orbit space. The heart of the paper is \S \ref{chapter5}. In \S \ref{almostfinshed} we demonstrate that $X_n(\pm \id)$ is a 2-fold cohomological Galois cover over $G^n$. In \S \ref{bettys} we compute the Betti numbers of $X_n(\pm \id)$, proving Theorem \ref{formalityresult} and thus determining the Poincar\'e series of $H^*_{\mathcal{G}}(\mathcal{A}_{flat})$. In \S \ref{rockthecasbah} we consider torsion in the cohomology. In \S \ref{morgau} we explore the implications for the Morse theory of the Yang-Mills functional. In \S \ref{relationships1} and \S \ref{relationships2} we explain the factorization formula (\ref{reduc}), postponing the proof until \S \ref{chapter6}. This in particular determines the ring structure of $H^*_{\mathcal{G}}(\mathcal{A}_{flat}) $ In \S \ref{chapter7} we compute the ordinary, rational cohomology of the orbit spaces $X_n(\mathcal{C})/G$ and in particular compute $H^*(\mathcal{A}_{flat}/\gau)$.

\emph{Notational conventions}: Unless otherwise stated, cohomology is singular with $\Q$ coefficients. Unless there is risk of confusion, we omit the superscript $*$ denoting cohomology, i.e. $H(.) = H^*(.)$. We use $G$ to denote $SU(2)$ and $T$ to denote the maximal torus of diagonal matrices in $SU(2)$. 

\emph{Funding}: This research was supported in part by an NSERC doctoral scholarship.

\emph{Acknowledgments}: This paper was motivated by questions of Raoul Bott, Kenji Fukaya and Lisa
Jeffrey about the Betti numbers of moduli spaces of flat connections over nonorientable
surfaces. It is based on the author's doctoral thesis conducted under the supervision of Lisa Jeffrey and Paul Selick and I thank them for all their guidance and support. Thanks also to Fred Cohen, Tom Goodwillie, Ian Hambleton, Nan-Kuo Ho, Geoff Lynch, Eckhard Meinrenken, Sue Tolman and Masrour Zoghi for helpful discussions and to Melissa Liu, Johan Martens and Dan Ramras for commenting on earlier drafts.

\section{Locus of T-fixed points}\label{locusoffixedpoints}

The fixed point loci for the maximal torus action on $X_n(\pm \id)$ will play an important role in what follows so we take time to describe it in some detail.

Recall from \S \ref{resout} that $X_n(+\id) \cong Hom(\pi_1(\Sigma_n), G)$. A maximal torus $T \subset G = SU(2)$ is maximal abelian, so it follows that a homomorphism $\phi$ is fixed by $T$ if and only if $\im( \phi) \subset T$, thus  
$$ X_n(+\id)^T \cong Hom(\pi_1(\Sigma_n), G)^T = Hom(\pi_1(\Sigma_n),T).$$
Any homomorphism from $\pi_1(\Sigma_n)$ to an abelian group must factor throught the abelianization $ \pi_1(\Sigma_n) / [ \pi_1(\Sigma_n), \pi_1(\Sigma_n)] \cong H_1( \Sigma_n, \Z) \cong \Z^n \oplus \Z_2$ and we obtain

$$ X_n(+\id)^T = Hom( H_1( \Sigma_n; \Z), T) \cong T^n \times \Z_2.$$
Since $X_n(+\id)^T$ is a group, it acts on itself by left multiplication. The identity component, $X_n(+\id)^{T}_0$ acts trivially on cohomology, so we obtain an action by the quotient $$ X_n(+\id)^T / X_n(+\id)^{T}_0 \cong \Z_2$$ on the cohomology ring $H^*(X_n(+\id)^T)$.

It is an easy exercise to show that $X_n(-\id)^T$ is a $X_n(+\id)^T$-torsor (i.e. is acted on freely and transitively by $X_n(+\id)^T$) so it is also diffeomorphic to $T^n \times \Z_2$ and inherits a $\Z_2$-action on cohomology.

\begin{lem}\label{twotoris}
For $\epsilon = \pm \id$, the fixed point loci $X_n(\epsilon)^T$ is diffeomorphic to $ T^n \times \Z_2$ and comes equipped with a component switching involution determined up to isotopy, inducing a $\Z_2$-action on cohomology.
\end{lem}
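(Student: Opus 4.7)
The plan is to leverage the group and torsor structures on the fixed-point loci already set up in the preceding paragraphs, then to exhibit a canonical component-switching involution using the central $Z(G)\cong\Z_2$ action.

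First I would consolidate the diffeomorphism claim. For $\epsilon=+\id$ the identification $X_n(+\id)^T=\Hom(\pi_1(\Sigma_n),T)=\Hom(H_1(\Sigma_n;\Z),T)\cong T^n\times\Z_2$ is already established just above. For $\epsilon=-\id$ I would verify the torsor statement explicitly: for $(g_i),(h_i')\in X_n(-\id)^T$ the componentwise ``difference'' $(h_i'g_i^{-1})\in G^{n+1}$ actually lies in $X_n(+\id)^T$ because $T$ is abelian and $\prod_i(h_i'g_i^{-1})^2=\prod_i{h_i'}^2\cdot\prod_i g_i^{-2}=(-\id)(-\id)^{-1}=\id$, so the componentwise multiplication action of $X_n(+\id)^T$ on $X_n(-\id)^T$ is free and transitive. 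Non-emptiness of $X_n(-\id)^T$ follows from taking $(g_0,\id,\dots,\id)$ with $g_0\in T$ any square root of $-\id$. Hence $X_n(-\id)^T\cong T^n\times\Z_2$ as a manifold as well.

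To produce the involution I would use the central $\Z_2=Z(G)$ action of the introduction, namely $(g_0,g_1,\dots,g_n)\mapsto(-g_0,g_1,\dots,g_n)$, which preserves $T$ and hence restricts to $X_n(\epsilon)^T$. To check that it swaps the two components I would introduce the ``total product'' map $p\colon X_n(\epsilon)^T\to T$, $p(g_0,\dots,g_n)=\prod_i g_i$. Since $T$ is abelian, $p^2=\epsilon$, so $p$ takes values in a finite subgroup of $T$. A direct computation using the identifications of the previous paragraph shows that $p$ separates the two components: two tuples lie in the same component iff their componentwise quotient is in the identity component of $X_n(+\id)^T$, iff their $p$-values agree. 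Since the central involution sends $p$ to $-p$, it switches components.

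Finally I address the ``determined up to isotopy'' clause. Any component-switching self-map of $X_n(\epsilon)^T$ of the form left-translation by a non-identity-component element of $X_n(+\id)^T$ (transported to $X_n(-\id)^T$ via the torsor structure) differs from the central involution by translation by some element of the connected torus $X_n(+\id)^T_0\cong T^n$, and every such translation is isotopic to the identity via translation along a path in $T^n$. This forces the induced endomorphism on $H^*(X_n(\epsilon)^T)$ to be independent of the choice, yielding the asserted $\Z_2$-action on cohomology. The only step I expect to require genuine care is the verification that the central $\Z_2$-action really permutes the two components rather than preserving each one; the product map $p$ is the cleanest invariant I see for pinning this down.
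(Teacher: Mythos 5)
Your argument is correct and is essentially the paper's own route: the $\Hom(H_1(\Sigma_n;\Z),T)$ identification for $\epsilon=+\id$, the torsor argument for $\epsilon=-\id$ (left as an ``easy exercise'' in the paper, which you write out), and the same involution given by multiplying the zeroth factor by $-\id$, with isotopy-independence via translation along the connected identity component. Your total product map $p(g_0,\dots,g_n)=\prod_i g_i$ is just an explicit trivialization of the double cover $\rho_T\colon X_n(\epsilon)^T\to T^n$ that the paper uses in Lemma \ref{deckd} to see that the central action switches the two components.
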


It will be useful to describe this involution more explicitly. For $\epsilon = \pm \id$, the set $X_n(\epsilon)^T \subset G^{n+1}$ satisfies:
\begin{equation}\label{fixgh}
X_n(\epsilon)^T = \{ (t_0,..., t_n) \in T^{n+1} | \prod_{i=0}^n t_i^2 = \epsilon \}.
\end{equation}

\begin{lem}\label{deckd}
For $\epsilon = \pm \id$, the projection map  $\rho_T: X_n(\epsilon)^T \rightarrow T^n$ sending $(t_0,...,t_n)$ to $(t_1,...,t_n)$ is a trivial 2-fold covering map. The deck tranformation group $\Z_2$ acts by multiplying the zeroth factor by $-\id$. 
\end{lem}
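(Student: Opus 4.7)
The plan is to reduce the statement to the elementary fact that the squaring map $T \to T$ is a surjective $2$-to-$1$ covering homomorphism, and then to trivialize $\rho_T$ by writing down an explicit global section.

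First I would characterize the fiber of $\rho_T$ over a point $(t_1,\ldots,t_n) \in T^n$ using the defining equation (\ref{fixgh}): it consists of all $t_0 \in T$ satisfying $t_0^2 = \epsilon\, \prod_{i=1}^n t_i^{-2}$. Since $T \cong U(1)$ and the squaring endomorphism of $T$ is surjective with kernel $\{\pm\id\}$, this fiber has exactly two elements, and they differ precisely by multiplication by $-\id$. This step simultaneously shows that $\rho_T$ is a $2$-fold covering (it is clearly a local homeomorphism, being the restriction of a linear projection $T^{n+1} \to T^n$ to a smooth submanifold on which the fiber count is constant) and identifies the potential deck involution as $(t_0,t_1,\ldots,t_n) \mapsto (-t_0,t_1,\ldots,t_n)$.

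The second step is to produce a continuous section, which will establish triviality. Choose once and for all an element $\omega \in T$ with $\omega^2 = \epsilon$; such an $\omega$ exists because $T$ is divisible ($\omega = \id$ works for $\epsilon = +\id$, and $\omega = \mathrm{diag}(i,-i)$ works for $\epsilon = -\id$). Define
$$ s(t_1,\ldots,t_n) := \bigl(\omega\, \textstyle\prod_{i=1}^n t_i^{-1},\, t_1,\ldots,t_n\bigr). $$
A direct check gives $\prod_{i=0}^n s(t_1,\ldots,t_n)_i^2 = \omega^2 = \epsilon$, so $s$ lands in $X_n(\epsilon)^T$, is continuous, and satisfies $\rho_T \circ s = \mathrm{id}_{T^n}$. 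The map $(t_1,\ldots,t_n,\pm) \mapsto (-\id)^{\pm} \cdot s(t_1,\ldots,t_n)$ (multiplying the zeroth factor) is then a homeomorphism $T^n \times \Z_2 \to X_n(\epsilon)^T$ commuting with the projections, exhibiting the trivialization and confirming the description of the deck transformation.

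There is no serious obstacle here; the only point needing care is the existence of $\omega$ in the case $\epsilon = -\id$, which could in principle have failed on topological grounds, but is resolved by the fact that $-\id$ does admit a square root within the maximal torus of $SU(2)$. Once $\omega$ is in hand, everything follows from the algebraic identity $\omega^2 \prod t_i^{-2} \prod t_i^2 = \omega^2$.
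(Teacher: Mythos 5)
Your proof is correct and is exactly the elementary verification the paper leaves as ``straightforward'': identify the fibre over $(t_1,\ldots,t_n)$ as the two square roots in $T$ of $\epsilon\prod_{i=1}^n t_i^{-2}$, which differ by $-\id$, and trivialize via the explicit global section built from a fixed square root $\omega$ of $\epsilon$ in $T$. The only slightly loose phrase is the claim that $\rho_T$ is ``clearly a local homeomorphism,'' but your section argument already supplies this: $X_n(\epsilon)^T$ is the disjoint union of the graphs of $s$ and $(-\id)\cdot s$, so no extra justification is needed.
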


\begin{proof}
Straightforward.
\end{proof}

Of course there is nothing special about the $0$th coordinate.  Multiplying any other factor by $-\id$ defines an isotopic transformation and induces the same $\Z_2$-action on cohomology.

Decomposing $H(X_n(\epsilon)^T)$ in eigenspaces for the action detemines a $\Z_2$-grading, and it follows easily from Lemma \ref{deckd} that we obtain an isomorphism of $\Z_2\oplus \Z$-graded rings:

\begin{equation*}
H(X_n(\epsilon)^T) \cong \Q \Z_2 \otimes H(T^n)
\end{equation*}
where $\Q \Z_2$ is the group ring of $\Z_2$.
If we use subscripts $\pm $ to distinguish the $\pm 1$ eigenspaces of the action, we obtain
\begin{equation}\label{nearlydoneyeah2}
H(X_n(\epsilon)^{T})_+ \cong H(X_n(\epsilon)^{T})_- \cong H(T^n)
\end{equation}
as $\Z$-graded vector spaces. We will use (\ref{nearlydoneyeah2}) later to describe the localization map in equivariant cohomology. 

Let $X_m(\epsilon_1)^T \times_{\Z_2} X_n(\epsilon_2)^T$ denote the quotient of  $X_m(\epsilon_1)^T \times X_n(\epsilon_2)^T$ by the diagonal $\Z_2$ action.

\begin{lem}\label{popeurban}
Let $\epsilon_i \in \{ \pm \id \}$ for $i = 1,2$.
We have a diffeomorphism 
\begin{equation}\label{montana}
  X_m(\epsilon_1)^T \times_{\Z_2} X_n(\epsilon_2)^T \cong X_{m+n}(\epsilon_1 \epsilon_2)^T.
\end{equation}
\end{lem}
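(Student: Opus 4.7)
The plan is to construct an explicit diffeomorphism by multiplying the distinguished ``zeroth'' factors together. Define
$$\phi \co X_m(\epsilon_1)^T \times X_n(\epsilon_2)^T \to X_{m+n}(\epsilon_1 \epsilon_2)^T$$
by
$$\phi\bigl((s_0, \dots, s_m), (t_0, \dots, t_n)\bigr) = (s_0 t_0,\, s_1, \dots, s_m,\, t_1, \dots, t_n).$$
The image lands in the correct space because $(s_0 t_0)^2 \prod_{i=1}^m s_i^2 \prod_{j=1}^n t_j^2 = \bigl(\prod_{i=0}^m s_i^2\bigr)\bigl(\prod_{j=0}^n t_j^2\bigr) = \epsilon_1 \epsilon_2$, using that $T$ is abelian. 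The diagonal $\Z_2$-action (from Lemma \ref{deckd}) multiplies both $s_0$ and $t_0$ by $-\id$, so $s_0 t_0$ is invariant while all other coordinates are fixed, meaning $\phi$ descends to a map $\bar \phi$ from the quotient $X_m(\epsilon_1)^T \times_{\Z_2} X_n(\epsilon_2)^T$.

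Next I would verify that $\bar\phi$ is bijective. Given $(u_0, u_1, \dots, u_{m+n}) \in X_{m+n}(\epsilon_1 \epsilon_2)^T$, necessarily we set $s_i = u_i$ for $1 \le i \le m$ and $t_j = u_{m+j}$ for $1 \le j \le n$. The remaining constraints $s_0^2 = \epsilon_1 \prod_{i=1}^m u_i^{-2}$ and $s_0 t_0 = u_0$ determine $s_0$ up to a sign (and then $t_0 = u_0 s_0^{-1}$ is forced). The hypothesis $\prod_{k=0}^{m+n} u_k^2 = \epsilon_1 \epsilon_2$ ensures that the induced $t_0$ satisfies $\prod_{j=0}^n t_j^2 = \epsilon_2$, so both preimages really do lie in $X_m(\epsilon_1)^T \times X_n(\epsilon_2)^T$. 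The two lifts are exchanged by the diagonal $\Z_2$-action, so they represent a single point of the quotient.

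Finally, $\phi$ is visibly smooth, and since the diagonal $\Z_2$-action is free (it changes the value of $s_0$), the quotient is a smooth manifold of the same dimension as $X_{m+n}(\epsilon_1 \epsilon_2)^T$. Thus $\bar\phi$ is a smooth bijection between manifolds of equal dimension whose derivative is clearly of full rank away from any critical behaviour; more directly, a smooth local inverse to $\phi$ can be written down after choosing local square-root sections of $T \to T$, $t \mapsto t^2$. This makes $\bar\phi$ a diffeomorphism.

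I do not expect a genuine obstacle here: the only mild subtlety is that the formula for $\phi$ is not symmetric in the two factors (it singles out the $0$th coordinate of each), but this is harmless because, as remarked after Lemma \ref{deckd}, the choice of coordinate determines the $\Z_2$-action only up to isotopy, so the resulting diffeomorphism class of the quotient is well-defined.
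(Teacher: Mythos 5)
Your proof is correct and follows essentially the same route as the paper: the paper's own argument is precisely the explicit map $\phi\bigl((\pm g_0,g_1,\dots,g_m)\times(\pm h_0,\dots,h_n)\bigr)=(g_0h_0,g_1,\dots,g_m,h_1,\dots,h_n)$, which is your $\bar\phi$. Your added verifications of well-definedness, bijectivity via the two sign-related lifts, and smoothness using Lemma \ref{deckd} (local square-root sections) simply fill in details the paper leaves as ``straightforward.''
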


\begin{proof}
It follows easily from Lemma \ref{deckd} that both sides of (\ref{montana}) are diffeomorphic to $T^{m+n} \times \Z_2$. Explicitly, the map $$\phi: X_m(\epsilon_1)^T \times_{\Z_2} X_n(\epsilon_2)^T \rightarrow X_{m+n}(\epsilon_1 \epsilon_2)^T$$
defined by $\phi( (\pm g_0, g_1,..., g_m) \times (\pm h_0, ..., h_n)) = (g_0h_0, g_1,...,g_m, h_1,...,h_n)$ produces a diffeomorphism.
\end{proof}

The $\Z_2$-action on $X_n(\epsilon)^T$ extends to the full space $X_n(\epsilon)$ in an obvious way. The projection map $ X_n(\epsilon) \rightarrow G^n$ is preserved by the $\Z_2$-action but is no longer a covering map. Nevertheless, in \S \ref{almostfinshed} we will show that it behaves like a Galois cover at the level of cohomology.

\section{Cohomological orbit spaces}\label{Cohomological orbit spaces}

In this section we introduce the notion of a cohomological orbit space. This is a variation on the notion of a cohomological principal bundle, from \cite{b1} and \cite{b2} .

\begin{define}\label{cohofib}
Given topological spaces $X$ and $Y$, we say that a continuous map $f\co X \rightarrow Y$ is a \textbf{cohomological orbit space} for the cohomology theory $H$ if there exists a group $\Gamma$ and a $\Gamma$ space $\ti{X}$ fitting into a commutative diagram

\begin{equation}\begin{CD}
\xymatrix{ \ti{X} \ar[r] \ar[d]^{\pi} & X \ar[d]^f \\
           \ti{X}/\Gamma \ar[r] & Y }
\end{CD}\end{equation}
such that the horizontal arrows induce isomophisms in H-cohomology.
\end{define}
In the special case that $\ti{X} \rightarrow \ti{X}/\Gamma$ is a finite Galois cover (i.e. a principal bundle with finite structure group), we say that $ X \rightarrow Y$ is a \textbf{finite cohomological Galois cover} for $H$.

\begin{define}\label{princ}
Let $ f\co X \rightarrow Y $ be a continuous map between topological
spaces $X$ and $Y$ where $X$ is a paracompact Hausdorff space, and let $ \Gamma $ be a finite group
acting on the right of $X$.
We say $(f\co X  \rightarrow Y, \Gamma )$ is a \textbf{strong cohomological
orbit space} for the cohomology theory H if:
\\
i) $f$ is a closed surjection
\\
ii) $f$ descends through the quotient to a map $h$,
\\
\begin{equation*}\begin{CD}
\xymatrix{ X \ar[d]^{\pi} \ar[r]^{id_X}& X \ar[d]^f \\
       X/\Gamma \ar[r]^h & Y}
\end{CD}\end{equation*}
\\
iii) $H(h^{-1}(y)) \cong H(pt)$ for all $y \in Y$
\end{define}

The terminology above is somewhat modified from \cite{b1} because new examples were found in \cite{b2} and in the present paper which require a broader definition. The next proposition is a rewording and slight generalization of Proposition 2.3 from \cite{b1}, and is proven using the same method. 

Let $H(X;F)$ denote sheaf cohomology of the constant sheaf $F_X$, where $F$ is a field (in all applications we have in mind, sheaf cohomology is isomorphic to singular cohomology).
\begin{prop}\label{strongweak}
For sheaf cohomology $H(.;F)$, any strong cohomological orbit space is also a cohomological orbit space.
\end{prop}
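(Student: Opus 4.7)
The plan is to take the candidate $\tilde{X} = X$ equipped with the given right $\Gamma$-action, so that the square of Definition~\ref{cohofib} becomes the identity $X \to X$ along the top (trivially an isomorphism in cohomology) and the descended map $h\co X/\Gamma \to Y$ along the bottom. The whole proposition therefore reduces to showing that $h^*\co H^*(Y;F) \to H^*(X/\Gamma;F)$ is an isomorphism.

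First I would verify the topological preliminaries. The quotient $X/\Gamma$ is paracompact Hausdorff because $X$ is and $\Gamma$ is finite. The map $h$ is surjective since $f = h\circ\pi$ is. It is also closed: if $C\subseteq X/\Gamma$ is closed, then $\pi^{-1}(C)$ is closed in $X$, and by hypothesis (i)
\[ h(C) \;=\; h\bigl(\pi(\pi^{-1}(C))\bigr) \;=\; f(\pi^{-1}(C)) \]
is closed in $Y$. Thus $h$ is a closed surjection between paracompact Hausdorff spaces.

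Next I would feed this into the Leray spectral sequence
\[ E_2^{p,q} \;=\; H^p\bigl(Y;\, R^q h_* F_{X/\Gamma}\bigr) \;\Longrightarrow\; H^{p+q}(X/\Gamma; F). \]
For a closed map between paracompact Hausdorff spaces, a standard sheaf-theoretic argument (Godement/Bredon) identifies the stalks of the higher direct images with the cohomology of the fibre:
\[ (R^q h_* F_{X/\Gamma})_y \;\cong\; H^q\bigl(h^{-1}(y);\,F\bigr). \]
Hypothesis (iii) then forces this stalk to equal $F$ when $q=0$ and to vanish for $q>0$. Consequently $R^q h_* F_{X/\Gamma} = 0$ for $q>0$, and the adjunction unit $F_Y \to h_* F_{X/\Gamma}$ is a stalk-wise isomorphism, hence an isomorphism of sheaves. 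The spectral sequence collapses to the edge column and yields the required isomorphism $h^*\co H^*(Y;F) \stackrel{\sim}{\to} H^*(X/\Gamma;F)$.

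The main obstacle is precisely the stalk identification $(R^q h_* F_{X/\Gamma})_y \cong H^q(h^{-1}(y);F)$: for a merely closed (rather than properly proper) map this is delicate, and one must exploit paracompactness together with the existence of a cofinal system of open neighbourhoods of each fibre coming from closedness of $h$. Once that identification is granted, the rest of the argument is a routine spectral-sequence collapse, essentially following the pattern of Proposition~2.3 of \cite{b1}.
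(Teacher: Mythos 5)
Your proposal is correct and follows essentially the same route the paper intends: the paper defers to Proposition 2.3 of \cite{b1}, whose method is exactly this reduction to showing $h^*$ is an isomorphism via the Leray spectral sequence (Vietoris--Begle type argument), using closedness of $h$ for cofinality of the neighbourhoods $h^{-1}(U)$ and tautness of the closed fibres in the paracompact quotient to identify the stalks of $R^q h_* F$ with $H^q(h^{-1}(y);F)$, which vanish for $q>0$ by hypothesis (iii). The only caveat is the incidental claim that $Y$ is paracompact Hausdorff, which is neither guaranteed nor needed, since all the delicate sheaf-theoretic work happens on $X/\Gamma$.
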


Our main interest in cohomological orbit spaces derives from the following standard fact about finite group actions (Bredon \cite{br} II 19.2).

\begin{thm}\label{cov} Let $X$ be a topological space, let $\Gamma$ be a finite group acting on $X$ and let $\pi\co X \rightarrow X/\Gamma$ denote the quotient map onto the orbit space $X/\Gamma$. The induced map $\pi^*$ restricts to an isomorphism

\begin{equation*}
\pi^*\co H(X/\Gamma; \Q) \rightarrow H(X;\Q)^{\Gamma}
\end{equation*}
where $H(X; \Q)^{\Gamma}$ denotes the ring of $\Gamma$ invariants.
\end{thm}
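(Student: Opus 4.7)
The plan is to split $\pi^*$ by constructing a transfer map, exploiting the fact that $|\Gamma|$ is invertible in $\mathbb{Q}$. First I would verify that the image of $\pi^*$ lies in the invariant ring: for each $g \in \Gamma$ the orbit map $g\co X \to X$ satisfies $\pi \circ g = \pi$, so $g^* \circ \pi^* = \pi^*$, placing $\mathrm{im}(\pi^*)$ inside $H(X;\mathbb{Q})^{\Gamma}$.

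The heart of the argument I would give sheaf-theoretically, consistent with the sheaf cohomology setup in Proposition \ref{strongweak}. Let $\mathbb{Q}_X$ and $\mathbb{Q}_{X/\Gamma}$ be the constant sheaves. The pushforward $\pi_*\mathbb{Q}_X$ on $X/\Gamma$ carries a natural $\Gamma$-action, and I would first show $(\pi_*\mathbb{Q}_X)^{\Gamma} \cong \mathbb{Q}_{X/\Gamma}$ as sheaves. This reduces to the stalkwise claim that the $\Gamma$-invariants of the permutation module $\mathbb{Q}[\pi^{-1}(y)]$ form a one-dimensional subspace, which follows from the transitivity of $\Gamma$ on each fiber.

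Next, since $\pi$ is a closed map with finite fibers, the higher direct images vanish ($R^q\pi_*\mathbb{Q}_X = 0$ for $q > 0$, because the stalks compute cohomology of a finite discrete fiber), so the Leray spectral sequence collapses and yields a $\Gamma$-equivariant isomorphism $H(X/\Gamma;\pi_*\mathbb{Q}_X) \cong H(X;\mathbb{Q})$. Moreover, because $|\Gamma|$ is invertible in $\mathbb{Q}$, the averaging idempotent $\tfrac{1}{|\Gamma|}\sum_{g\in\Gamma}g$ realizes $\Gamma$-invariants as an exact functor on sheaves of $\mathbb{Q}$-vector spaces, so it commutes with cohomology. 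Concatenating the identifications:
\[
H(X/\Gamma;\mathbb{Q}) \;\cong\; H(X/\Gamma;(\pi_*\mathbb{Q}_X)^{\Gamma}) \;\cong\; H(X/\Gamma;\pi_*\mathbb{Q}_X)^{\Gamma} \;\cong\; H(X;\mathbb{Q})^{\Gamma}.
\]

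The main technical point, which I expect to be the principal obstacle to writing a fully rigorous proof, is verifying that the composite isomorphism actually agrees with the map induced by $\pi$ itself; this amounts to tracing naturality through each of the three identifications above (the sheaf isomorphism, the averaging projector, and the Leray edge map). A chain-level alternative would define a transfer $\tau$ by summing singular simplices over their preimages and check $\tau \circ \pi^* = |\Gamma| \cdot \mathrm{id}$ together with $\pi^* \circ \tau = \sum_{g \in \Gamma} g^*$; this is more concrete but requires care with lifting singular simplices across the ramification locus, so the sheaf-theoretic route seems preferable given the conventions already established in the excerpt.
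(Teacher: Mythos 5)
The paper does not prove this statement at all: it quotes it as a standard fact and cites Bredon \cite{br} II 19.2, and the proof given there is essentially the sheaf-theoretic transfer/averaging argument you outline (the unit map $\Q_{X/\Gamma} \rightarrow (\pi_*\Q_X)^{\Gamma}$ being an isomorphism by transitivity on fibers, exactness of taking $\Gamma$-invariants in characteristic zero, vanishing of $R^q\pi_*\Q_X$ for the closed finite-to-one orbit map, and naturality of the Leray edge homomorphism to identify the composite with $\pi^*$). Your argument is therefore correct and matches the cited standard proof; the only caveats are point-set ones --- you need the points of a fiber to be separable (e.g. $X$ Hausdorff) to identify the stalk of $\pi_*\Q_X$ with $\Q[\pi^{-1}(y)]$ and to get the vanishing of the higher direct images, and you need sheaf cohomology to agree with the singular cohomology of the paper's conventions --- all of which hold for the compact varieties to which the theorem is actually applied.
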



As an immediate consequence we deduce:

\begin{cor}\label{invt}
Let $f\co X \rightarrow Y$ be a cohomological orbit space for $H(.;\Q)$ with finite structure group $\Gamma$. Then $f^*\co H(Y;\Q) \cong H(X;\Q)^{\Gamma}$.
\end{cor}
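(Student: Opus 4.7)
The plan is to assemble the corollary by chaining together Theorem \ref{cov} with the two cohomology isomorphisms furnished by Definition \ref{cohofib}. Unpacking the definition, we obtain a $\Gamma$-space $\tilde{X}$ together with a commutative square whose horizontal maps $\tilde{X} \to X$ and $\tilde{X}/\Gamma \to Y$ are isomorphisms on $H(\,\cdot\,;\Q)$. Since $\Gamma$ is finite, Theorem \ref{cov} applies directly to the quotient map $\pi\co \tilde{X} \to \tilde{X}/\Gamma$ and yields
$$\pi^*\co H(\tilde{X}/\Gamma;\Q) \xrightarrow{\cong} H(\tilde{X};\Q)^{\Gamma}.$$

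Next, I would transport structures across the two horizontal cohomology isomorphisms. The top arrow gives $H(X;\Q) \cong H(\tilde{X};\Q)$; pulling back the $\Gamma$-action on the right equips $H(X;\Q)$ with a $\Gamma$-action (this is the action implicit in the notation $H(X;\Q)^{\Gamma}$) and produces $H(X;\Q)^{\Gamma} \cong H(\tilde{X};\Q)^{\Gamma}$. The bottom arrow gives $H(Y;\Q) \cong H(\tilde{X}/\Gamma;\Q)$. Composing these three isomorphisms yields a ring isomorphism $H(Y;\Q) \to H(X;\Q)^{\Gamma}$, and commutativity of the defining square identifies this composite with $f^*$, completing the argument.

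The main thing to check, rather than a serious obstacle, is the bookkeeping around the $\Gamma$-action: Definition \ref{cohofib} does not posit a genuine $\Gamma$-action on $X$, so the invariant ring $H(X;\Q)^{\Gamma}$ must be read as the invariants under the action inherited via the isomorphism $H(X;\Q) \cong H(\tilde{X};\Q)$. In the examples that motivate the corollary, such as the $\Z_2$-action on $X_n(\pm \id)$ by multiplication by $-\id$ on the zeroth factor, the action does lift to a topological action on $X$ and the two readings agree, so no ambiguity arises in practice.
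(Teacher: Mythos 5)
Your proposal is correct and follows essentially the same route the paper intends: the corollary is stated there as an immediate consequence of Theorem \ref{cov}, obtained by applying it to $\pi\co \ti{X}\rightarrow \ti{X}/\Gamma$ and transporting along the two horizontal cohomology isomorphisms in the defining square, exactly as you do. Your remark about reading $H(X;\Q)^{\Gamma}$ via the transported $\Gamma$-action (which agrees with the genuine action in the strong/applied cases) is the right bookkeeping and matches the paper's usage.
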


We remark that in Corollary $\ref{invt}$, $\Q$ could be replaced by a field $F$ with characteristic relatively prime to the order of $\Gamma$.

\section{Cohomology of $X_n(\epsilon)$}\label{chapter5}

In this section, we will compute the Poincar\'e polynomial for the spaces $X_{n}(\pm \id)$. We remind the reader that $G$ denotes the Lie group $SU(2)$, $\lie{g} = \lie{su}(2)$ its Lie algebra and $T \subset G$ the maximal torus of diagonal matrices in $SU(2)$.

\subsection{Regular and singular values of $\Box_n$}\label{Regular and singular}

Define the map  $\Box_n : G^{n+1} \rightarrow G$ by $$\Box_n((g_0,...,g_n)) = \prod_{i=0}^n g_i^2.$$ Clearly we have $ X_n(\epsilon) = \Box_n^{-1}( \epsilon)$ for $\epsilon = \pm \id$.

\begin{prop}\label{singu}
The only singular value of $\Box_{n}: SU(2)^{n+1} \rightarrow SU(2)$ is $(-\id)^{n+1}$.
\end{prop}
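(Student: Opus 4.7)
The plan is to exploit the decomposition $\Box_n = m \circ (s \times \cdots \times s)$, where $s(g) = g^2$ is the squaring map on $G = SU(2)$ and $m \co G^{n+1} \to G$ is the iterated multiplication. The strategy is to first pin down the critical set of $s$, then observe that $m$ is a submersion in each factor separately, and combine these facts to deduce that $\Box_n$ can fail to be submersive only at points where every coordinate is a critical point of $s$.

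The first step is to compute $ds$. Using left translation to identify tangent spaces with $\lie g = \lie{su}(2)$, the expansion $s(g e^{tX}) = g^2 \cdot e^{t(\mathrm{Ad}(g^{-1})X + X)} + O(t^2)$ shows that $ds_g$ is represented by the operator $\mathrm{Ad}(g^{-1}) + \id$ on $\lie g$. Since the eigenvalues of $\mathrm{Ad}(g)$ on $\lie{su}(2)$ are $\{1, e^{2i\theta}, e^{-2i\theta}\}$ when $g$ has eigenvalues $e^{\pm i\theta}$, this operator fails to be invertible exactly when $g^2 = -\id$. Thus the critical set of $s$ is the conjugacy class of $\mathrm{diag}(i,-i)$ and its only critical value is $-\id$.

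Next I would observe that $m$ is a submersion in each coordinate separately: with all $h_j$ for $j \neq i$ fixed, the map $h_i \mapsto h_0 \cdots h_n$ is a composition of left and right translations in $G$, hence a diffeomorphism. By the chain rule, the image of $d\Box_n$ at $(g_0, \ldots, g_n)$ therefore contains the image under this invertible partial of the image of $ds_{g_i}$, for every $i$. So the moment some $g_i$ satisfies $g_i^2 \neq -\id$, the corresponding $i$-th contribution is already all of $T_{\Box_n(g_0,\ldots,g_n)}G$, and $d\Box_n$ is surjective. Contrapositively, at any critical point of $\Box_n$ we have $g_i^2 = -\id$ for every $i$, forcing $\Box_n(g_0,\ldots,g_n) = \prod g_i^2 = (-\id)^{n+1}$.

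To finish, I would exhibit an honest critical point of $\Box_n$ so that $(-\id)^{n+1}$ is actually attained as a singular value: taking $g_0 = \cdots = g_n = \mathrm{diag}(i,-i)$, the centrality of $-\id = g_i^2$ trivialises the adjoint twists in the chain rule, and the image of $d\Box_n$ collapses to the single one-dimensional subspace $(\mathrm{Ad}(g_0^{-1}) + \id)\lie g \subsetneq \lie g$. The only real ingredient is the identification of the critical set of the squaring map, which is a short eigenvalue calculation; the remainder is just the chain rule together with the submersivity of group multiplication, so I do not expect a genuine obstacle.
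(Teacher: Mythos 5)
Your proposal is correct and takes essentially the same approach as the paper: both arguments reduce to the fact that the differential of the squaring map is $\id + \mathrm{Ad}(g)$ (up to a choice of trivialization), which is singular precisely when $g^2 = -\id$, so that $d\Box_n$ is surjective as soon as a single coordinate satisfies $g_i^2 \neq -\id$, and both finish by exhibiting the critical point $(J,\dots,J)$ with $J^2 = -\id$. The only differences are cosmetic: the paper writes the tangent map of $\Box_n$ directly in a right trivialization and detects singularity of $\id + \mathrm{Ad}(g)$ via the rotation-by-$180$-degrees picture in $SO(3)$, whereas you factor $\Box_n$ through the squaring and multiplication maps and use the eigenvalues of $\mathrm{Ad}(g)$ on $\lie{su}(2)$.
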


\begin{proof}
Using right invariant vector
fields, identify
\begin{equation}\label{trivi}
TG^{n+1} \cong G^{n+1} \times \lie{g}^{n+1}.
\end{equation}
Under this identification, the tangent map $T\Box_n$ is:

\begin{equation*}
T\Box_{n,(g_0,...,g_n)}(\xi_0,...,\xi_n) = \sum_{k=0}^{n}
Ad_{(g_0^2...g_{k-1}^2)}( \xi_k + Ad_{g_k}(\xi_k))
\end{equation*}
where $ g_i \in G$ and $\xi_i \in \lie{g}$.
In particular $(g_0,...,g_n)$ is a regular point for $\Box_n$ as long
as $Id_{\lie{g}} + Ad_{g_i}$ is nonsingular for at least one $i \in \{0,...,n\}$. The adjoint action of G on $\lie{g}$ acts through $SO(\lie{g})\cong SO(3)$, so $Id_{\lie{g}} + Ad_{g}$ is singular precisely when $Ad_g$ is rotation by 180 degrees, which happens if and only if $g^2= - \id$, so the only possible singular value is $ (-\id)^{n+1}$. An example of a singular point in the preimage of $(-\id)^{n+1}$ is $(J,...J)$ for any $J \in G$ satisfying $J^2 = -\id$. 
\end{proof}

We will denote:
\begin{equation*}
X_n^s \co= X_{n}((-\id)^{n+1})
\end{equation*}
where s stands for singular.  Because
the set of regular values $G \setminus (-\id)^{n+1}$ is connected, it follows by the product neighbourhood theorem (see Milnor \cite{mi}) that
all the remaining fibres of $\Box_n$ are diffeomorphic. Indeed, $\Box_n$ must restrict to a trivial fibre bundle over the contractible set $G \setminus (-\id)^{n+1}$. We set

\begin{equation}\label{xnr}
X_n^r \co= X_{n}((-\id)^n)
\end{equation}
where $r$ stands for regular.

\begin{lem}
For $\mathcal{C} \subset G=SU(2)$ a conjugacy class other than $\{ \pm \id\}$, we have an isomorphism of $G$-spaces:
\begin{equation}\label{proddduct}
X_n(\mathcal{C}) \cong G/T \times X_n^r
\end{equation}
where the action on the right is the diagonal action. In particular $H_G( X_n(\mathcal{C})) \cong H_T(X_n^r)$. 
\end{lem}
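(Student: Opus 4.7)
The plan is to present $X_n(\mathcal{C})$ as a homogeneous fiber bundle over $\mathcal{C} \cong G/T$ and then trivialize it using a $T$-equivariant version of the product neighbourhood theorem. First, choose a base point $c_0 \in \mathcal{C} \cap T$, which exists since every element of $SU(2)$ is conjugate into $T$. Because $\mathcal{C}$ is not central, the stabilizer of $c_0$ under conjugation is exactly $T$. The restriction $\Box_n : X_n(\mathcal{C}) \to \mathcal{C}$ is $G$-equivariant, and since $\mathcal{C}$ is a single $G$-orbit with stabilizer $T$, we obtain a canonical $G$-equivariant diffeomorphism
$$X_n(\mathcal{C}) \cong G \times_T \Box_n^{-1}(c_0),$$
with $T$ acting on the fiber by restriction of the conjugation action.

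The second and main step is to identify the fiber $T$-equivariantly with $X_n^r$. Let $U := T \setminus \{(-\id)^{n+1}\}$; since $T \cong U(1)$, $U$ is a connected, contractible arc consisting entirely of regular values of $\Box_n$ by Proposition \ref{singu}. Both $c_0$ and $(-\id)^n$ lie in $U$: the former because $\mathcal{C}$ is non-central, the latter because $(-\id)^{n}(-\id)^{n+1} = -\id \neq \id$. The restriction $\Box_n : \Box_n^{-1}(U) \to U$ is a $T$-equivariant submersion in which $T$ acts by conjugation on the source and trivially on the target (since $U \subset T$ and $T$ is abelian). Averaging any Riemannian metric on $G^{n+1}$ over the compact group $T$ produces a $T$-invariant one, whose orthogonal complement to $\ker(d\Box_n)$ furnishes a $T$-invariant Ehresmann connection. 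Parallel transport along any smooth path in $U$ from $(-\id)^n$ to $c_0$ then yields a $T$-equivariant diffeomorphism $X_n^r \xrightarrow{\sim} \Box_n^{-1}(c_0)$. This is the one nontrivial step and is the main obstacle, in that it requires a geometric (rather than purely algebraic) construction to promote the abstract diffeomorphism between regular fibers supplied by Ehresmann's theorem to a $T$-equivariant one.

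Finally, since $(-\id)^n$ is central, $X_n^r$ is $G$-invariant in $G^{n+1}$, so the $T$-action on $X_n^r$ extends to the full conjugation action of $G$. The standard associated-bundle identity
$$G \times_T X_n^r \xrightarrow{\sim} G/T \times X_n^r, \qquad [g, x] \mapsto ([g], g \cdot x),$$
is $G$-equivariant for the diagonal action on the right hand side (left translation on $G/T$, conjugation on $X_n^r$), and composing with the previous two steps yields the desired isomorphism $X_n(\mathcal{C}) \cong G/T \times X_n^r$ of $G$-spaces. The cohomological consequence follows from the standard identity $H_G(G/T \times Y) \cong H_T(Y)$ for any $G$-space $Y$, which is immediate from the Borel construction $EG \times_G (G/T \times Y) \simeq EG \times_T Y$. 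Everything after the fiber identification is formal.
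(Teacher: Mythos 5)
Your proof is correct, and it follows the paper's overall skeleton: both arguments first use $G$-equivariance of $\Box_n$ and the identification $\mathcal{C}\cong G/T$ to write $X_n(\mathcal{C})\cong G\times_T F_{c_0}$, identify $G/T\times X_n^r$ with the bundle induced from the $T$-space $X_n^r$, and thereby reduce everything to showing $F_{c_0}\cong X_n^r$ as $T$-spaces. Where you genuinely diverge is in that key fiber identification. The paper observes that restricting $\Box_n$ over $T\setminus\{(-\id)^{n+1}\}$ gives a deformation from $X_n^r$ to $F_{c_0}$ in the sense of Palais--Stewart \cite{ps} and then invokes their rigidity theorem for compact group actions on compact manifolds to conclude the two $T$-spaces are isomorphic; this is short but rests on a citation. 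You instead build the equivariant identification by hand: average a metric to get a $T$-invariant Ehresmann connection for the proper submersion $\Box_n^{-1}(U)\to U$ (legitimate, since $U$ consists of regular values by Proposition \ref{singu}, properness follows from compactness of $G^{n+1}$, and $T$ acts trivially on $U\subset T$ so horizontal lifts of a fixed path are permuted by the action and parallel transport commutes with $T$). This buys a self-contained, elementary argument that makes the ``rigidity'' explicit as equivariant parallel transport, at the cost of a somewhat longer verification; the paper's route is quicker and generalizes to situations where one only has a continuous family of actions rather than a submersion with invariant connection. The remaining steps (non-central conjugacy classes have stabilizer exactly $T$, $(-\id)^n$ central so $X_n^r$ is $G$-invariant, $G\times_T Y\cong G/T\times Y$ when the $T$-action extends to $G$, and $H_G(G/T\times Y)\cong H_T(Y)$) are handled correctly.
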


\begin{proof}
The map $\Box_n : X_n(\mathcal{C}) \rightarrow \mathcal{C}$ is $G$-equivariant fibre bundle and the conjugacy class $\mathcal{C}$ is isomorphic as a $G$-space to $G/T$. It follows for general reasons that if $c \in \mathcal{C}$ is fixed by $T$, and $F_c$ is the fibre over $c$, then we have an isomorphism of $G$-spaces:
$$ X_n(\mathcal{C}) \cong  G \times_T F_c$$
where $G \times_T F_c$ is the $G$-space induced from the $T$-space $F_c$. Similarly $G/T \times X_n^r$ is induced from the $T$-space $X_n^r$. So it suffices to show that $X_n^r \cong F_c$ as $T$-spaces. 

Restricting $\Box_n: G^{n+1} \rightarrow G$ to the preimage of $ T \setminus (-\id)^{n+1}$ defines a deformation from $X_n^r$ to $F_c$ in the sense of Palais-Stewart \cite{ps}. By the rigidity of compact group actions on compact manifolds, that $X_n^r$ and $F_c$ are isomorphic $T$-spaces. 
\end{proof}

It may be helpful to consider some examples for small $n$.  The simplest examples are 

\begin{equation}\label{easy1}
X_0^r \cong S^0
\end{equation}
\begin{equation}\label{easy2}
X_0^s \cong S^2
\end{equation}
where $G$ acts trivially on $S^0$ and by the usual action on $S^2 \cong \C P^1$. 
The next example requires a little work. Identify $S^2$ with the adjoint orbit of $$  \left[ \begin{array}{cc}
i & 0  \\
0 & -i  \end{array} \right] \in \lie{su}(2)$$ and identify $S^1 \cong \R / 2 \pi \Z$. Consider the map $f: S^2 \times S^1 \rightarrow SU(2)$ defined by $ f( X, t) = exp(tX)$.  This map is $G$-equivariant, where $G$ acts trivially on the $S^1$ factor and by the adjoint action on the $S^2$ factor.

\begin{prop}\label{hardyharhar}
Define $ F \co S^2 \times S^1 \rightarrow SU(2)^2$ by $F(X,t) = (f(X,t),f(X,-t +\pi/2))$. Then $F$ restricts to a $SU(2)$-equivariant diffeomorphism
\begin{equation*}
X_1^r \cong S^2 \times S^1
\end{equation*}
where we view $X_1^r$ as a subset of $SU(2)^2$.
\end{prop}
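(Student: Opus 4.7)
The plan is to verify in turn that $F$ (i) lands in $X_1^r$, (ii) is $SU(2)$-equivariant, (iii) is a bijection, and (iv) has everywhere injective differential. Identify $\lie{su}(2)$ with the pure imaginary quaternions $\mathrm{Im}(\mathbb{H})$, so that $X \in S^2$ satisfies $X^2 = -\id$ and $f(X,t) = \cos t + \sin t \cdot X$. Using $\cos(\pi/2 - t) = \sin t$ and $\sin(\pi/2 - t) = \cos t$,
\[ F(X,t) = (\cos t + \sin t \cdot X,\; \sin t + \cos t \cdot X), \]
and a direct computation gives $f(X,t)^2 f(X, -t+\pi/2)^2 = \exp(2tX)\exp((\pi-2t)X) = \exp(\pi X) = -\id$, verifying (i). Equivariance (ii) is immediate from $\mathrm{Ad}_u \circ \exp = \exp \circ \mathrm{Ad}_u$ applied to both components.

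The heart of the argument is surjectivity. The key structural observation is that any $(g_0,g_1) \in X_1^r$ must consist of commuting elements. Rewriting the defining equation as $g_1^2 = -g_0^{-2}$ and using that $-\id$ is central, $g_0^2$ and $g_1^2$ have identical centralizers; when $g_0^2 \notin \{\pm\id\}$ this common centralizer is the unique maximal torus $T' \ni g_0, g_1$, and when $g_0^2 = \pm\id$ one of $g_0,g_1$ is central, so commutativity holds trivially. Consequently we may write $g_0 = \cos a + \sin a \cdot X'$ and $g_1 = \cos b + \sin b \cdot X'$ for a common $X' \in S^2$, and $g_0^2 g_1^2 = -\id$ collapses to $\cos 2(a+b) = -1$, i.e.\ $a + b \equiv \pi/2 \pmod{\pi}$. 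When $a+b \equiv \pi/2 \pmod{2\pi}$, set $(X,t) = (X', a)$; the complementary case $a+b \equiv 3\pi/2 \pmod{2\pi}$ is absorbed by the reparameterization $(X', a, b) \mapsto (-X', -a, -b)$, which leaves $(g_0, g_1)$ invariant and shifts $a+b$ by $\pi$.

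Injectivity is a direct computation: equating scalar and imaginary parts in $F(X,t) = F(X',t')$ yields $(\cos t, \sin t) = (\cos t', \sin t')$, hence $t \equiv t' \pmod{2\pi}$, after which the second coordinate forces $X = X'$. An analogous ambient-derivative calculation shows $dF_{(X,t)}(V,\tau) = 0$ forces $\tau = 0$ (pair the scalar parts of the two components and use $\sin^2 t + \cos^2 t = 1$) and then $V = 0$. Since $S^2 \times S^1$ and $X_1^r = \Box_1^{-1}(-\id)$ (a regular level set by Proposition \ref{singu}) are compact smooth $3$-manifolds, a smooth bijection with injective differential is a diffeomorphism. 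The main obstacle is the commutativity fact underlying (iii); it is not transparent from the defining equation, but once unlocked the problem reduces to straightforward quaternion arithmetic on a common torus.
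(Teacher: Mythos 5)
Your proof is correct and follows essentially the same route as the paper's: a direct verification that $F$ is a smooth, equivariant, bijective immersion into the regular level set $X_1^r$, hence a diffeomorphism. The paper dismisses injectivity and surjectivity as "straightforward to verify"; your observation that the defining relation $g_0^2g_1^2=-\id$ forces $g_0$ and $g_1$ to lie in a common maximal torus is exactly the detail needed to make the surjectivity claim honest, and the rest of your checks match the paper's sketch.
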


\begin{proof}
Equivariance is immediate. We have $exp(tX)^2 exp((-t+\pi/2)X)^2 = exp( \pi X) = -\id$, so $f$ maps into $X_1^r$. The tangent map $df_{(X,t)}$ is injective for $t \notin \pi \Z$, so it follows that $F$ is an immersion. Injectivity and surjectivity are straightforward to verify.
\end{proof}

These examples, though helpful for illustrating the general formulas of Theorem \ref{onepoly}, are perhaps deceptively simple. We show in Proposition \ref{lettieri} that all remaining examples have torsion in integral homology.

\subsection{$X_n(\epsilon) \rightarrow SU(2)^n$ is a cohomological Galois cover}\label{almostfinshed}

For $g\in G = SU(2)$, define $\sqrt{g} \co= \{ h \in G | h^2 =g \}$.

\begin{lem}\label{S2S0}
If $g \in G$, then up to diffeomorphism,

\begin{equation*}
\sqrt{g}\cong \begin{cases}
S^2 &\text{ $g = -\id$}\\
S^0 &\text{ otherwise }\\
\end{cases}.
\end{equation*}
\end{lem}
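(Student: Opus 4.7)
The plan is to analyze $\sqrt{g}$ by observing that any $h$ with $h^2 = g$ must commute with $g$, so $\sqrt{g} \subseteq Z_G(g)$. This reduces everything to a case analysis on the centralizer, which in $SU(2)$ is either a maximal torus (for regular $g$) or all of $G$ (for $g = \pm \id$).

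First I would handle the generic case $g \notin \{\pm \id\}$. Here $g$ is regular, so $Z_G(g) = T_g$, the unique maximal torus containing $g$, and $T_g \cong U(1)$. The squaring map $U(1) \to U(1)$ is a connected double cover, so $\sqrt{g}$ consists of exactly two elements, giving $\sqrt{g} \cong S^0$. The case $g = \id$ is handled by direct eigenvalue considerations: if $h \in SU(2)$ satisfies $h^2 = \id$, then its eigenvalues $\lambda, \bar\lambda$ satisfy $\lambda^2 = 1$ with $\lambda \bar\lambda = 1$, forcing $\lambda = \pm 1$ with both signs equal, hence $h = \pm \id$ and again $\sqrt{\id} \cong S^0$.

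The remaining, more interesting case is $g = -\id$, where the centralizer argument is vacuous since $Z_G(-\id) = G$. Instead I would use the characterization that $h^2 = -\id$ is equivalent to $h$ having eigenvalues $\pm i$, i.e.\ to $\mathrm{tr}(h) = 0$. All such elements are conjugate to the fixed element $J = \mathrm{diag}(i,-i)$, so $\sqrt{-\id}$ is precisely the conjugacy class of $J$. Since the stabilizer of $J$ under conjugation is $T$, this conjugacy class is the homogeneous space $G/T \cong S^2$, which supplies both the set-theoretic identification and the smooth structure.

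The argument involves no real obstacle; the only point needing a moment's attention is to confirm that the bijections are in fact diffeomorphisms, which is immediate in the zero-dimensional cases and follows from the standard homogeneous space structure in the $g = -\id$ case.
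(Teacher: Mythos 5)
Your proof is correct, and since the paper dismisses this lemma with the single word ``Straightforward,'' your argument simply supplies the standard details: square roots lie in the centralizer, which handles the regular case, and $\sqrt{-\id}$ is the trace-zero conjugacy class $G/T \cong S^2$. No gaps, and no divergence from the paper's (implicit) intent.
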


\begin{proof}
Straightforward.
\end{proof}

Let $Z(G) = \{ \pm \id\}$ denote the centre of $G$.

\begin{lem}\label{couscous}
For $g \in G$, $Z(G) \cong \Z_2$ acts on $\sqrt{g}$ by right multiplication and $$H(\sqrt{g} / Z(G); \Q) \cong H(pt;\Q).$$
\end{lem}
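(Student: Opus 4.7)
The plan is to verify the claim case by case using the description of $\sqrt{g}$ from Lemma \ref{S2S0}. In the generic case $g \neq -\id$, we have $\sqrt{g} \cong S^0$, explicitly the pair $\{h, -h\}$ for any chosen square root $h$. Right multiplication by $-\id$ exchanges these two points, so $\sqrt{g}/Z(G)$ is a single point and its rational cohomology is trivially that of a point.

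For the exceptional case $g = -\id$, identify $SU(2)$ with the unit quaternions; then $\sqrt{-\id}$ is exactly the unit sphere in the $3$-dimensional space of purely imaginary quaternions, which is an $S^2$. Right multiplication by $-\id \in Z(G)$ acts by $h \mapsto -h$, i.e.\ as the antipodal involution. Hence $\sqrt{-\id}/Z(G) \cong \R P^2$.

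To conclude I would invoke Theorem \ref{cov} (the $\Z_2$-action is a finite group action on a paracompact Hausdorff space), giving
\[
H(\sqrt{-\id}/Z(G); \Q) \cong H(S^2; \Q)^{\Z_2}.
\]
The antipodal map on $S^n$ has degree $(-1)^{n+1}$, so on $H^2(S^2;\Q)$ it acts by $-1$ while acting trivially on $H^0$. The invariant ring is therefore $\Q$ concentrated in degree zero, matching $H(pt;\Q)$, and the lemma is proved.

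There is no real obstacle here: the only mildly non-obvious point is recognizing the involution on $\sqrt{-\id} \cong S^2$ as the antipodal map, which is immediate from the quaternionic model. Everything else is a direct application of Lemma \ref{S2S0} together with Theorem \ref{cov}.
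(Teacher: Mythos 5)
Your proof is correct and follows essentially the same route as the paper: identify $\sqrt{g}/Z(G)$ as a point for $g \neq -\id$ and as $\R P^2$ (via the antipodal involution on $\sqrt{-\id} \cong S^2$) for $g = -\id$, then observe both are rationally acyclic. The paper simply quotes the standard fact $H(\R P^2;\Q) \cong H(pt;\Q)$ where you derive it from Theorem \ref{cov} and the degree of the antipodal map, which is a harmless elaboration of the same argument.
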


\begin{proof}
Certainly the action is well defined because $h^2 = (-h)^2$, for $h \in G$.  From Lemma \ref{S2S0}, it follows that,

\begin{equation*}
\sqrt{g}/Z(G)=\begin{cases}
\R P^2 &\text{ $g = -\id$}\\
pt &\text{ otherwise }\\
\end{cases}
\end{equation*}
both of which have the cohomology of a point rationally.
\end{proof}

\begin{prop}\label{somemanyprops}
For $\epsilon = \pm \id$, let $\rho : X_n(\epsilon) \rightarrow G^n$ denote projection onto the last $n$ factors, $$\rho( (g_0,...,g_n)) = (g_1,...,g_n).$$ Let $Z(G) \cong \Z_2$ act on $X_n(\epsilon)$ by multiplying the $0$th factor. Then $( \rho: X_n(\epsilon) \rightarrow G^n, \Z_2)$ forms a strong cohomological Galois cover (Definition \ref{princ})  for rational cohomology. In particular the invariant subring $H(X_n(\epsilon))_+ = H(X_n(\epsilon))^{\Z_2} \cong H(G^n)$.
\end{prop}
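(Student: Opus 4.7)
The plan is to verify directly the three conditions of Definition \ref{princ} for strong cohomological orbit spaces, then invoke Proposition \ref{strongweak} to upgrade this to a cohomological orbit space, and finally apply Corollary \ref{invt} to obtain the statement about the invariant subring.

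First I would check conditions (i) and (ii), which are essentially formal. For (i), $X_n(\epsilon)$ is a closed subset of the compact space $G^{n+1}$, so it is compact; hence its image under the continuous projection $\rho$ to the Hausdorff space $G^n$ is automatically closed, and the continuous map is automatically closed. Surjectivity holds because for any $(g_1,\dots,g_n) \in G^n$ we may set $c \co= \epsilon \cdot (g_1^2 \cdots g_n^2)^{-1}$ and, since the squaring map $G \to G$ on $SU(2)$ is surjective (by Lemma \ref{S2S0}, $\sqrt{c}$ is always nonempty), pick any $g_0 \in \sqrt{c}$ to produce a preimage. For (ii), the $\Z_2$-action multiplies only the $0$th coordinate by $\pm\id$, so it preserves the remaining coordinates; thus $\rho$ factors through $X_n(\epsilon)/\Z_2$ to yield a continuous $h$.

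The substance of the proof is condition (iii), the identification of the fibres of $h$. Fix $y = (g_1,\dots,g_n) \in G^n$ and set $c_y \co= \epsilon \cdot (g_1^2 \cdots g_n^2)^{-1}$. Then
\begin{equation*}
\rho^{-1}(y) = \{(g_0, g_1, \dots, g_n) : g_0^2 = c_y\} \cong \sqrt{c_y},
\end{equation*}
and the $\Z_2$-action on $\rho^{-1}(y)$ corresponds precisely to the right multiplication action of $Z(G) = \{\pm\id\}$ on $\sqrt{c_y}$ from Lemma \ref{couscous}. Hence $h^{-1}(y) \cong \sqrt{c_y}/Z(G)$, which by Lemma \ref{couscous} has the rational cohomology of a point (it is either $\R P^2$ or a single point depending on whether $c_y = -\id$ or not). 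This establishes condition (iii).

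Putting the three pieces together shows $(\rho, \Z_2)$ is a strong cohomological Galois cover for rational sheaf cohomology; Proposition \ref{strongweak} then promotes it to a cohomological orbit space, and Corollary \ref{invt} yields the identification $H(G^n;\Q) \cong H(X_n(\epsilon);\Q)^{\Z_2} = H(X_n(\epsilon))_+$. I do not anticipate a serious obstacle: the only subtle step is recognising the fibre as a quotient of $\sqrt{c_y}$, and this has been isolated into Lemmas \ref{S2S0} and \ref{couscous} precisely so that the present proposition reduces to bookkeeping.
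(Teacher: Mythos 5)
Your proposal is correct and follows essentially the same route as the paper: closedness from compactness, the $\Z_2$-invariance of $\rho$, and the key identification of the fibre over $(g_1,\dots,g_n)$ with $\sqrt{\epsilon\,\Box_{n-1}(g_1,\dots,g_n)^{-1}}$ so that Lemma \ref{couscous} gives $H(h^{-1}(y))\cong H(pt)$, followed by Proposition \ref{strongweak} and Corollary \ref{invt}. The only difference is that you spell out surjectivity of $\rho$ (via nonemptiness of square roots), which the paper leaves implicit.
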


\begin{proof}
The map $\rho$ is closed because $X_n(\epsilon)$ is compact. The $\Z_2$ action clearly leaves $\rho$ invariant. It only remains to study the fibres of $\rho$.  

Recall that $\Box_n(g_0,...,g_n) = \prod g_i^2$. For $ (g_1,...,g_n) \in G^{n}$,

\begin{align*}
\rho^{-1}((g_1,...,g_n)) = \{ (g_0,...,g_n) \in G^{n+1} | \Box_n(g_0,...,g_n) = \epsilon \} \cong \\
 \{ g_0 \in G | (g_0)^2 = \epsilon \Box_{n-1}(g_1,...,g_n)^{-1} \} = \sqrt{ \epsilon \Box_{n-1}(g_1,...,g_n)^{-1}}
\end{align*}
so by Lemma \ref{couscous}, $H(\rho^{-1}((g_1,...,g_n)) / \Z_2) \cong H(pt)$.
\end{proof}

\begin{rmk}\label{MIA}
In \cite{b2} chapter 8, a deeper explanation for Proposition \ref{somemanyprops} is provided. It is shown that in the case of a general compact, simply connected group $K$, the projection map is a ``cohomological covering map" with deck transformation group $Tor_1^{\Z}(\Z_2, Z(K))$ which fails to be Galois.
\end{rmk}

\subsection{Betti numbers of $X_{n}^r$ and $X_n^s$}\label{bettys}

In this section we prove

\begin{thm}\label{onepoly}
For rational cohomology, the Poincar\'e polynomials for the representation varieties $X_n^r$ and $X_n^s$ are
\begin{equation}\label{numbe1}
P_t(X_n^r) = P_t( X_n((-\id)^{n}) = (1+t^3)^n + (t+t^2)^n
\end{equation}
and
\begin{equation}\label{numbe2}
P_t(X_n^s) = P_t( X_n((-\id)^{n+1})= (1+t^3)^n + t^2(t +t^2)^n.
\end{equation}
\end{thm}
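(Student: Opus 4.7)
The plan is to prove both identities by simultaneous induction on $n$. For the base case $n=0$, the identifications $X_0^r \cong S^0$ and $X_0^s \cong S^2$ given in (\ref{easy1})--(\ref{easy2}) yield $P_t(X_0^r)=2$ and $P_t(X_0^s)=1+t^2$, agreeing with the claimed formulas.

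For the inductive step, I would study the projection $p\colon X_n(\epsilon)\to G$ onto the first coordinate, whose fibre over $g_0$ is $X_{n-1}(\epsilon g_0^{-2})$. By Proposition \ref{singu}, this fibre is diffeomorphic to the regular fibre $X_{n-1}^r$ except when $g_0$ lies on the wall $W := \{g_0\in G : g_0^2 = \epsilon(-\id)^n\}$, over which the fibre becomes the singular $X_{n-1}^s$. The wall is $W\cong S^0$ when $\epsilon=(-\id)^n$ (computing $X_n^r$) and $W\cong S^2$ when $\epsilon=(-\id)^{n+1}$ (computing $X_n^s$). Cover $G$ by a tubular neighbourhood $U$ of $W$ and the complement $V=G\setminus W$. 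Then $p^{-1}(V)\cong V\times X_{n-1}^r$ (since $g_0\mapsto \epsilon g_0^{-2}$ sends $V$ into the contractible set $G\setminus\{(-\id)^n\}\cong\R^3$, over which $\Box_{n-1}$ is a trivial fibration), $p^{-1}(U\cap V)\cong (U\cap V)\times X_{n-1}^r$ by the same argument, and $p^{-1}(U)$ deformation retracts onto the trivial bundle $p^{-1}(W)\cong W\times X_{n-1}^s$. The Mayer--Vietoris long exact sequence for $X_n(\epsilon)=p^{-1}(U)\cup p^{-1}(V)$ then relates the Poincar\'e polynomials of these pieces to $P_t(X_n(\epsilon))$, all ingredients being given by the inductive hypothesis.

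To control the Mayer--Vietoris connecting map I would exploit the $\Z_2$-action from Proposition \ref{somemanyprops}. The projection $p$ is $\Z_2$-equivariant (the induced action on $G$ being $g_0\mapsto -g_0$), so the entire Mayer--Vietoris sequence splits into $\pm 1$ eigenspaces. By the same proposition, the invariant summand $H^*(X_n(\epsilon))_+ \cong H^*(G^n)$ is already known and contributes exactly $(1+t^3)^n$, leaving only the anti-invariant part to compute. On each Mayer--Vietoris piece the $\Z_2$-action is explicit: it swaps the two components of $p^{-1}(U)$ in the $X_n^r$ case, acts antipodally on the $S^2$-wall in the $X_n^s$ case, and interchanges the two global sheets of the trivial double cover $p^{-1}(V)\to V$ in both cases. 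A direct bookkeeping in the anti-invariant eigenspace should then deliver the remaining contributions $(t+t^2)^n$ and $t^2(t+t^2)^n$ respectively, closing the induction.

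The hard part will be correctly tracking the signs of the $\Z_2$-action on top cohomology classes --- particularly on the fibres $X_{n-1}^s$ inductively, and on the $S^2$'s arising in the punctured tubular neighbourhood $p^{-1}(U\cap V)$ --- since these signs dictate which generators pair off through the connecting homomorphism in each eigenspace. (A naive application of Mayer--Vietoris, additively on Poincar\'e polynomials, already fails in the $n=1$ case.) Once the signs are set up --- for instance by combining the orientation-preserving character of the deck translation $g_0\mapsto -g_0$ on $G$ with the degree $-1$ of the antipodal map on $H^2(S^2)$ --- the anti-invariant Mayer--Vietoris reduces to a small linear algebra problem whose solution is the claimed Poincar\'e polynomial.
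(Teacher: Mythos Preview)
Your overall strategy---induct on $n$, study the projection onto the first coordinate, and split everything into $\Z_2$-eigenspaces using Proposition~\ref{somemanyprops}---is sound and parallels the paper's for the $X_n^r$ step. But the paper's argument diverges from yours at the two places you leave open, and it is worth seeing how.

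First, for $X_n^r$ the paper also uses the projection to $G$, but rather than a Mayer--Vietoris cover it takes the long exact sequence of the pair $(X_n^r,\phi^{-1}(2))$, where $\phi = Tr\circ\pi_0$ has its extreme fibres equal to $X_{n-1}^s$ (your $p^{-1}(W)$). Durfee's rug function result and Poincar\'e duality give the exact sequence (\ref{hohum}). The crucial point is that the paper \emph{never computes the boundary map}. Instead it observes that (\ref{hohum}) bounds $\dim H(X_n^r)\le 2^{n+1}$, while Borel's inequality (Proposition~\ref{forcrit}) gives $\dim H(X_n^r)\ge \dim H((X_n^r)^T)=2^{n+1}$ from the explicit fixed-point set of Lemma~\ref{twotoris}. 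Equality forces short-exactness, and the Poincar\'e polynomial drops out. This is Lemma~\ref{ineqqq}, and it entirely replaces your ``sign bookkeeping''. Your approach would instead have to track the specialization map $H^*(X_{n-1}^s)\to H^*(X_{n-1}^r)$ through the induction, which is strictly more data than the Poincar\'e polynomials you are assuming; without that, the anti-invariant Mayer--Vietoris is underdetermined (as you already noticed at $n=1$).

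Second, for $X_n^s$ the paper does \emph{not} use the projection to $G$ at all. It instead uses the long exact sequence of the pair $(G^{n+1},X_n^r)$, together with the identification $H^k(G^{n+1},X_n^r)\cong H_{3n+3-k}(X_n^s)$ coming from Poincar\'e duality and the fact that $G^{n+1}\setminus X_n^r$ retracts onto $X_n^s$ (equation (\ref{step1})). The image of $H^*(G^{n+1})\to H^*(X_n^r)$ is exactly $H^*(X_n^r)^{\Z_2}$ by Lemma~\ref{tauinv}, so this sequence is completely understood once $P_t(X_n^r)$ is known. Thus the paper's induction runs $X_{k-1}^s \Rightarrow X_k^r \Rightarrow X_k^s$, using two different geometric inputs, whereas yours tries to do both via the same Mayer--Vietoris template.

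In short: your plan could be made to work, but carrying it through requires controlling the anti-invariant restriction maps inductively, not just the Betti numbers. The paper sidesteps this entirely with the fixed-point dimension bound---that is the idea you are missing.
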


The cup product structure will be determined in \S \ref{local}.

\begin{rmk}
Though we work throughout with rational coefficients, Theorem \ref{onepoly} remains true in odd characteristic. 
\end{rmk}

\begin{rmk}
By Proposition \ref{somemanyprops}, $H^*(X_n^r)_+ \cong H^*(X_n^s)_+ \cong H^*(G^n)$ and $P_t(G^n) \cong (1+t^3)^n$.  It follows that the first and second terms in (\ref{numbe1}) and (\ref{numbe2}) are the Poincar\'e polynomial for the +1 and -1 eigenspaces of the $\Z_2$ action, respectively.
\end{rmk}

\begin{cor}\label{torefer}
For $\epsilon \in \{\pm \id\}$ the $T$-space $X_{n}(\epsilon)$ is equivariantly formal over rational coefficients.
\end{cor}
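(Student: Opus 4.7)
The plan is to apply the general equivariant formality criterion mentioned in Proposition \ref{forcrit}: for a reasonable compact $T$-space $X$, equivariant formality is equivalent to the equality $\dim H^*(X;\Q) = \dim H^*(X^T;\Q)$. So the corollary reduces to a Betti number count on both sides, and essentially all the work has already been carried out in the preceding material.

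First I would compute the total dimension of $H^*(X_n(\epsilon);\Q)$ by setting $t=1$ in the Poincar\'e polynomials of Theorem \ref{onepoly}. For $X_n^r$ this gives $(1+1)^n + (1+1)^n = 2^{n+1}$, and for $X_n^s$ it gives $(1+1)^n + 1\cdot(1+1)^n = 2^{n+1}$. Since for each parity of $n$ the two spaces $X_n(+\id)$ and $X_n(-\id)$ are respectively identified with $X_n^r$ and $X_n^s$ (as $(-\id)^n$ equals $+\id$ or $-\id$ according to the parity of $n$), in every case one obtains $\dim H^*(X_n(\epsilon);\Q) = 2^{n+1}$.

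Next I would compute the Betti numbers of the fixed locus. By Lemma \ref{twotoris}, $X_n(\epsilon)^T \cong T^n \times \Z_2$, so
\begin{equation*}
\dim H^*(X_n(\epsilon)^T;\Q) = 2 \cdot \dim H^*(T^n;\Q) = 2 \cdot 2^n = 2^{n+1}.
\end{equation*}
Thus the two dimensions agree.

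The final step is to invoke Proposition \ref{forcrit}: $X_n(\epsilon)$ is a compact (real algebraic) $T$-space, and the matching of total Betti numbers gives $T$-equivariant formality, i.e. $H_T^*(X_n(\epsilon);\Q) \cong H^*(X_n(\epsilon);\Q) \otimes H^*(BT;\Q)$ as $H^*(BT)$-modules. There is no real obstacle at this stage; the substantive content was already packaged into Theorem \ref{onepoly} and Lemma \ref{twotoris}. The only small bookkeeping point to be careful about is the bijection between $\epsilon \in \{\pm \id\}$ and the labels $r, s$, which depends on the parity of $n$ but in both parities yields the same numerical conclusion.
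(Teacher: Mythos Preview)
Your proposal is correct and follows essentially the same approach as the paper: invoke Proposition \ref{forcrit}, compute $\dim H^*(X_n(\epsilon))$ from Theorem \ref{onepoly} and $\dim H^*(X_n(\epsilon)^T)$ from Lemma \ref{twotoris}, and observe that both equal $2^{n+1}$. The only difference is that you spell out the parity bookkeeping between $\epsilon$ and the $r,s$ labels, which the paper leaves implicit.
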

\begin{proof}
Using Proposition \ref{forcrit}, we need only show that $$ \dim H( X_n(\epsilon)) = \dim H(X_n(\epsilon)^T).$$ From Lemma \ref{twotoris} and Theorem \ref{onepoly} we find that both sides equal $2^{n+1}$.
\end{proof}

We used above the following basic result of about torus actions due to Borel:

\begin{prop}\label{forcrit}( see \cite{bor} IV 5.5)
Let $T$ be a compact torus and let $X$ be a compact Hausdorff $T$-space. Then $$\dim H^*(X) \geq \dim H^*(X^T)$$ with equality if and only if $X$ is $T$-equivariantly formal.
\end{prop}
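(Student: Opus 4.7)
The plan is to exploit the Borel construction fibration $X \hookrightarrow X_T := ET \times_T X \to BT$, and play its Leray--Serre spectral sequence against the localization theorem for $T$-actions. Since $T$ acts trivially on $H^*(BT)$-coefficients (as $BT$ is simply connected), the spectral sequence has $E_2^{p,q} = H^p(BT) \otimes H^q(X)$ and converges to $H_T^*(X)$. By definition, $X$ is $T$-equivariantly formal when this spectral sequence degenerates at $E_2$, equivalently when $H_T^*(X)$ is a free $H^*(BT)$-module of rank $\dim H^*(X)$.

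Let $R = H^*(BT)$ and let $K$ be its field of fractions. First I would establish the upper bound: from the spectral sequence, $H_T^*(X)$ admits a filtration whose associated graded is a subquotient of $R \otimes H^*(X)$, so as an $R$-module it has at most $\dim H^*(X)$ generators, and therefore
\[
\operatorname{rank}_R H_T^*(X) \;\leq\; \dim H^*(X),
\]
with equality iff the sequence collapses and $H_T^*(X)$ is free over $R$, i.e.\ iff $X$ is equivariantly formal.

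Next I would produce the matching lower bound via Borel's localization theorem: for a compact Hausdorff (finite-dimensional) $T$-space, the restriction map $H_T^*(X) \to H_T^*(X^T)$ becomes an isomorphism after tensoring with $K$. Since $T$ acts trivially on $X^T$, one has $H_T^*(X^T) \cong H^*(X^T) \otimes R$, which is free of rank $\dim H^*(X^T)$. Hence
\[
\operatorname{rank}_R H_T^*(X) \;=\; \dim H^*(X^T).
\]

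Combining the two bounds gives $\dim H^*(X^T) \leq \dim H^*(X)$, with equality forcing the spectral sequence to collapse \emph{and} $H_T^*(X)$ to be torsion-free (a finitely generated $R$-module whose rank matches its total dimension over $\Q$ in the relevant sense must be free, since $R$ is a polynomial ring and the associated graded already has the right size); conversely, equivariant formality directly yields freeness of rank $\dim H^*(X)$, which by localization equals $\dim H^*(X^T)$. The main subtlety I anticipate is the clean passage from ``collapse of the spectral sequence'' to ``$H_T^*(X)$ is free as an $R$-module,'' since filtration collapse only gives the right associated graded; the standard fix is to observe that a finitely generated graded $R$-module whose associated graded under a good filtration is free of the predicted rank must itself be free, which here follows because any set of classes in $H_T^*(X)$ lifting a basis of $H^*(X)$ generates freely by a Nakayama-type argument against $R_+$.
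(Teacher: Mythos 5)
The paper offers no proof of Proposition \ref{forcrit}: it is quoted from Borel's Seminar on Transformation Groups (reference IV.5.5), and the only in-text comment is the remark after Lemma \ref{ineqqq} that the statement is really one about \v{C}ech cohomology with rational coefficients. Your argument is essentially the classical proof of that cited result --- playing the spectral sequence of the Borel fibration $X_T \to BT$ against the localization theorem --- and it is correct in substance, with two points to tighten. First, in ``equality iff the sequence collapses'' you must rule out a nonzero differential whose image is $R$-torsion, since such a differential would not decrease the rank over the fraction field $K$; this is handled by taking $r$ minimal with $d_r \neq 0$, so that $E_r = E_2 = R \otimes H^*(X)$ is free over $R$, whence $\operatorname{im} d_r$ is a nonzero torsion-free submodule and the $K$-rank of $E_{r+1}$, hence of $E_\infty$ and of $H_T^*(X)$, drops strictly below $\dim H^*(X)$. (For the stated equivalence only rank additivity along the finite filtration in each degree is needed; your Nakayama-type remark about freeness is right, but it serves only to reconcile the two standard definitions of equivariant formality.) Second, since $X$ is merely compact Hausdorff, the Serre spectral sequence in singular cohomology is not quite the right tool: one should use the Leray spectral sequence of $X_T \to BT$ in sheaf (\v{C}ech) cohomology, whose $E_2$-term is still $H^*(BT) \otimes H^*(X)$ because $BT$ is simply connected, and the localization isomorphism after tensoring with $K$ likewise requires this setting together with $\dim_{\Q} H^*(X) < \infty$ (if $H^*(X)$ is infinite-dimensional the inequality is vacuous). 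This is exactly the caveat the paper flags, and in its applications \v{C}ech and singular cohomology agree, so your proof recovers the proposition in the form actually used.
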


The proof of Theorem \ref{onepoly} is by induction, using the long exact sequences (\ref{les2}) and (\ref{hohum}).

The trace map $Tr: G \rightarrow \R$,
$$Tr( \left[ \begin{array}{cc}
a & b  \\
-\bar{b} & \bar{a}  \end{array} \right] ) = a + \bar{a}$$ describes the height function for an imbedding of $G \cong S^3$ in $\R^4$. The image of $Tr$ is $[-2,2]$ and its only singular points are``poles" $\{ \pm \id\}$ where it achieves
its extreme values $\{ 2, -2\}$. It follows from Proposition \ref{singu} that the function
$f\co= (-1)^n Tr \circ \Box_n \co G^{n+1} \rightarrow \R$ has singular loci
$X_n^r$ and $X_n^s$ where $f$ achieves its maximum and minimum values respectively. 

The real algebraic map $f$ is by definition a \textbf{rug function} for its minimizing variety $X_n^s \subset G^n$.
By a result of Durfee \cite{du}, the inclusion $$X_n^s
\hookrightarrow f^{-1}([-2,2))= G^{n+1} - X_n^r$$ is a homotopy equivalence. We
deduce using Poincar\'e duality that:

\begin{equation}\label{step1}
H^k( G^{n+1}, X_n^r) \cong H_{3n+3-k}(X_n^s)
\end{equation}
We will use (\ref{step1}) in combination with the long exact sequence of the pair $i\co X_n^r \hookrightarrow G^{n+1} $:

\begin{equation}\label{les2}
...\rightarrow  H^*(G^{n+1}, X_n^r) \rightarrow H^*(G^{n+1}) \rightarrow^{i^*}
H^*(X_n^r) \rightarrow ...
\end{equation}
to relate $H(X_n^s)$ and $H(X_n^r)$. We now describe the image of $i^*$.

Let $\rho'\co G^{n+1} \rightarrow G^n$ denote projection onto all but the zeroth factor and let $\rho\co X_{n}^r \rightarrow G^n$ denote
the restriction of $\rho'$ to $X_{n}^r$.  We have a
commutative diagram:
\begin{equation}\begin{CD}\label{comm}
\xymatrix{X_{n}^r \ar[d]^{\rho} \ar[r]^i& G^{n+1} \ar[d]^{\rho'}\\
G^n  \ar@{=}[r] &   G^n}
\end{CD}\end{equation}
where $\rho: X_n^r \rightarrow G^n$ is a cohomological Galois cover by Proposition \ref{somemanyprops}.

\begin{lem}\label{tauinv}
The image of the map   $$i^*\co H(G^{n+1}) \rightarrow H(X_n^r)$$ appearing in (\ref{les2}) is the $\Z_2$-invariant subring $H(X_n^r)^{\Z_2} \cong H(G^n)$.
\end{lem}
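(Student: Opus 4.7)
The plan is to establish the two inclusions separately. For $H(X_n^r)^{\Z_2} \subseteq \im(i^*)$, I would invoke the commutative diagram (\ref{comm}): by Proposition \ref{somemanyprops} and Corollary \ref{invt}, $\rho^*$ is an isomorphism $H(G^n) \cong H(X_n^r)^{\Z_2}$, while the diagram gives the factorization $\rho^* = i^* \circ (\rho')^*$. Hence every invariant class in $H(X_n^r)$ is already in the image of $i^*$.

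For the reverse inclusion $\im(i^*) \subseteq H(X_n^r)^{\Z_2}$, I would extend the $\Z_2$-action on $X_n^r$ to all of $G^{n+1}$ by the same formula (multiplication of the zeroth factor by $-\id$), which makes $i$ equivariant. Consequently $i^*$ carries $\Z_2$-invariants to $\Z_2$-invariants, so it suffices to observe that the induced $\Z_2$-action on $H(G^{n+1})$ is trivial. This is the one small observation that does not come for free from the setup, and it will be the key step: the nontrivial element acts by left multiplication by $-\id$ on the zeroth $G$-factor, and since $G = SU(2)$ is path-connected, this map is homotopic to the identity, so it acts trivially on $H(G^{n+1})$. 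Thus every class of $H(G^{n+1})$ is $\Z_2$-invariant, and its image under $i^*$ lands in $H(X_n^r)^{\Z_2}$.

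Combining the two inclusions gives $\im(i^*) = H(X_n^r)^{\Z_2}$, and the identification with $H(G^n)$ is just the isomorphism $\rho^*$ from the first step. The main obstacle is essentially trivial once spotted — extending the deck transformation from $X_n^r$ to $G^{n+1}$ and using path-connectedness of $G$ — but without this observation one might try to compute $i^*$ directly by cross-sections or Künneth formulas, which would be more work.
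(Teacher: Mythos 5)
Your proposal is correct and follows essentially the same route as the paper: the reverse inclusion via $\im(i^*)\supseteq \im(i^*\circ(\rho')^*)=\im(\rho^*)=H(X_n^r)^{\Z_2}$ (Proposition \ref{somemanyprops} plus Theorem \ref{cov}/Corollary \ref{invt}), and the forward inclusion by extending the $\Z_2$-action to $G^{n+1}$, where it is homotopic to the identity (the paper says ``isotopically trivial''), so $i^*$ lands in the invariants.
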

\begin{proof}
The $\Z_2$-action on $X_n^r$ extends in an obvious way to $G^{n+1}$ where it is isotopically trivial.
Consequently, the image of $i^*$ lies in $H(X_n^r)^{\Z_2}$.  On the other hand, because $\rho' \co G^{n+1} \rightarrow G^n$ induces an injection in cohomology, $\im (i^*) \supseteq \im (\rho' \circ i)^* = im(\rho^*) = H(X_n^r)^{\Z_2}$, where the last equality follows from Theorem \ref{cov}, completing the proof.
\end{proof}

We now describe the other long exact sequence we will need for the induction argument of Theorem \ref{onepoly} .
For $n \geq 1$, consider the real algebraic map,
\begin{equation*}
\phi\co X_{n}^r \rightarrow [-2,2]
\end{equation*}
defined by $\phi(g_0,...,g_n) = Tr( g_0)$.

\begin{lem}\label{iminyork}
$\phi$ has two singular values, \{2,-2\}, over which $\phi$ has fibres isomorphic to $X_{n-1}^s$.
\end{lem}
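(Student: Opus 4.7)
The plan has three steps: identify the fibres over $\pm 2$ by direct substitution, observe that $\pm 2$ are automatically critical values since $Tr$ has critical points there, and then verify that $\phi$ is a submersion everywhere else. The main work is in the third step.

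For the fibre identification, I would simply note that $Tr^{-1}(\pm 2) = \{\pm \id\}$, so any point in $\phi^{-1}(\pm 2)$ has $g_0 = \pm \id$, hence $g_0^2 = \id$. The defining constraint $\prod_{i=0}^n g_i^2 = (-\id)^n$ for $X_n^r$ then reduces to $\prod_{i=1}^n g_i^2 = (-\id)^n = (-\id)^{(n-1)+1}$, which is exactly the relation cutting out $X_{n-1}^s = X_{n-1}((-\id)^n)$. So $\phi^{-1}(\pm 2) \cong X_{n-1}^s$. That $\pm 2$ are singular values of $\phi$ is immediate: writing $\phi = Tr \circ \mathrm{pr}_0$, the differential of $Tr$ vanishes at $\pm \id$, so $d\phi$ vanishes identically on $\phi^{-1}(\pm 2)$.

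The real content is showing that for $t \in (-2,2)$ the map $\phi$ is a submersion. I would reuse the formula for $T\Box_n$ from Proposition \ref{singu}. Tangent vectors to $X_n^r$ at $(g_0,\ldots,g_n)$ are the tuples $(\xi_0,\ldots,\xi_n) \in \lie{g}^{n+1}$ satisfying
\begin{equation*}
\sum_{k=0}^{n} \mathrm{Ad}_{g_0^2 \cdots g_{k-1}^2}\bigl((I + \mathrm{Ad}_{g_k})\xi_k\bigr) = 0.
\end{equation*}
Since $Tr$ is a submersion away from $\pm \id$, I can pick $\eta_0 \in \lie{g}$ with $dTr_{g_0}(\eta_0) \neq 0$; it then suffices to extend $\xi_0 = \eta_0$ to a tangent vector of $X_n^r$, which reduces to solving
\begin{equation*}
\sum_{k=1}^{n} \mathrm{Ad}_{g_0^2 \cdots g_{k-1}^2}\bigl((I+\mathrm{Ad}_{g_k})\xi_k\bigr) = -(I+\mathrm{Ad}_{g_0})\eta_0
\end{equation*}
for the remaining $\xi_k$. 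By the observation in the proof of Proposition \ref{singu}, $I+\mathrm{Ad}_{g_k}$ is invertible on $\lie{g}$ precisely when $g_k^2 \neq -\id$, so if such a $k \in \{1,\ldots,n\}$ exists we can solve the equation by choosing one $\xi_k$ and setting the others to zero.

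The one thing to check—this is really the only potential obstacle—is that the bad case "$g_k^2 = -\id$ for all $k \geq 1$" cannot occur over $t \in (-2,2)$. But in that case $\prod_{i=1}^n g_i^2 = (-\id)^n$, which combined with the defining relation for $X_n^r$ forces $g_0^2 = \id$, i.e., $g_0 = \pm \id$, contradicting $Tr(g_0) \in (-2,2)$. This closes the argument and shows that $\phi$ has no critical points outside $\phi^{-1}(\{\pm 2\})$.
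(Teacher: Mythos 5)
Your proof is correct, but it takes a genuinely different route from the paper. The paper exploits the fact that $X_n^r$ is the fibre product of $f\co G \rightarrow G$, $f(g) = (-1)^n g^{-2}$, with $\Box_{n-1}\co G^n \rightarrow G$: by base change ("regular values pull back to regular values"), the singular values of the projection $\pi_0$ onto the zeroth factor are contained in $f^{-1}(sing(\Box_{n-1})) = f^{-1}((-\id)^n) = \{\pm\id\}$, using Proposition \ref{singu} only through its statement about singular values, and the fibres of $\phi$ over $\pm 2$ are read off from the same square as $\Box_{n-1}^{-1}((-1)^n\id) = X_{n-1}^s$. You instead rerun the differential computation behind Proposition \ref{singu}: you describe $T X_n^r$ as the kernel of $T\Box_n$, lift a direction $\eta_0$ with $dTr_{g_0}(\eta_0)\neq 0$ to a tangent vector of $X_n^r$ by inverting some $I + Ad_{g_k}$ with $k \geq 1$, and rule out the degenerate case ($g_k^2 = -\id$ for all $k\geq 1$) by observing it would force $g_0 = \pm\id$. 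The paper's argument is shorter and more structural, treating regularity of $\Box_{n-1}$ as a black box and getting the fibre identification for free from the pullback square; yours is more hands-on, making explicit exactly where surjectivity of $d\phi$ could fail and why that locus sits over $\pm 2$, at the cost of redoing the linear algebra. Your explicit checks of the fibre identification ($g_0^2 = \id$ reduces the relation to that of $X_{n-1}((-\id)^n) = X_{n-1}^s$) and of $\pm 2$ actually being singular values (since $dTr$ vanishes at $\pm\id$) are both sound and match what the paper asserts.
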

\begin{proof}
The map $\phi = Tr \circ \pi_0$ where $\pi_0 \co X_n^r \rightarrow G$ is projection onto the zeroth factor. Because the singular values of $Tr : G \rightarrow \R$ are $\{\pm 2\}$, and $Tr^{-1}(\pm 2) = \pm \id$, it will be sufficient to show that  $sing(\pi_0)$ is a subset of $\{\pm \id\}$, where we denote by $sing(\pi_0)$ the singular value set of $\pi_0$.

$X_n^r$ fits into the pullback diagram:

\begin{equation*}\begin{CD}
\xymatrix{  X_n^r \ar[r]^{\rho} \ar[d]^{\pi_0} &  G^n \ar[d]^{\Box_{n-1}} \\
            G  \ar[r]^f &   G }
\end{CD}\end{equation*}
where $f \co G \rightarrow G$ sends $f(g) = (-1)^n g^{-2}$.  By the constant rank theorem, regular values pullback to regular values, so
\begin{equation*}
sing(\pi_0) \subseteq f^{-1}(sing(\Box_{n-1})) = f^{-1}( (-\id)^n ) = \pm \id
\end{equation*}
where we input Proposition \ref{singu} to get the middle inequality. The fibres of $\phi$ over $\pm \id$ are identified via the pull back diagram with $\Box_{n-1}^{-1} ( (-1)^n \id) = X_{n-1}^s$, completing the proof.
\end{proof}
We will consider the long exact sequence in cohomology for the pair $(X_n^r, \phi^{-1}(2))$.

\begin{equation*}
...\rightarrow H^*(X_n^r, \phi^{-1}(2)) \rightarrow H^*(X_n^r) \rightarrow H^*(\phi^{-1}(2)) \rightarrow ... .
\end{equation*}
Certainly $H^*(\phi^{-1}(2)) \cong H^*(X_{n-1}^s)$.  Furthermore, $\phi$ forms a rug function for $\phi^{-1}(-2)$ in the sense of \cite{du}, so the inclusion $\phi^{-1}(-2) \hookrightarrow \phi^{-1}([-2,2)) = X_n^r - \phi^{-1}(2)$ is a homotopy equivalence. By Poincar\'e duality,

\begin{equation}\label{threerrrs}
H^d(X_n^r,\phi^{-1}(2)) \cong H_{3n-d}(X_{n-1}^s)
\end{equation}
so we get an exact sequence for every degree $d$:

\begin{equation}\label{hohum}
H_{3n-d}(X_{n-1}^s) \rightarrow H^{d}(X_n^r) \rightarrow H^d(X_{n-1}^s).
\end{equation}

The following Lemma will form part of the induction step in the proof of Theorem \ref{onepoly}:

\begin{lem}\label{ineqqq}
Suppose that $\dim H(X_{n-1}^s) = 2^{n}$. Then the exact sequences (\ref{hohum}) extend to short exact sequences

\begin{equation*}
0 \rightarrow H_{3n-d}(X_{n-1}^s) \rightarrow H^{d}(X_n^r) \rightarrow H^d(X_{n-1}^s) \rightarrow 0
\end{equation*}
for all d.
\end{lem}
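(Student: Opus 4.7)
The plan is to pin down the total Betti number of $X_n^r$ by a squeeze, using the long exact sequence (\ref{hohum}) to bound $\dim H(X_n^r)$ from above and Borel's localization inequality (Proposition \ref{forcrit}) to bound it from below; dimensional equality then forces the sequence to split.

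For the upper bound, I invoke the standard fact that in any bounded long exact sequence of finite-dimensional vector spaces $\ldots \to A_d \to B_d \to C_d \to A_{d+1} \to \ldots$, exactness at each term yields
\begin{equation*}
\sum_d \dim B_d \;=\; \sum_d \dim A_d + \sum_d \dim C_d \;-\; 2\sum_d \dim \mathrm{im}(C_d \to A_{d+1}),
\end{equation*}
so $\sum_d \dim B_d \leq \sum_d \dim A_d + \sum_d \dim C_d$, with equality if and only if every connecting map vanishes. Applied to the long exact sequence of the pair $(X_n^r, \phi^{-1}(2))$, together with the duality identification $H^d(X_n^r, \phi^{-1}(2)) \cong H_{3n-d}(X_{n-1}^s)$ from (\ref{threerrrs}) and the hypothesis $\dim H(X_{n-1}^s) = 2^n$, this gives
\begin{equation*}
\dim H(X_n^r) \;\leq\; \dim H(X_n^r,\phi^{-1}(2)) + \dim H(X_{n-1}^s) \;=\; 2 \cdot 2^n \;=\; 2^{n+1},
\end{equation*}
with equality precisely when (\ref{hohum}) extends to short exact sequences in every degree --- which is exactly the conclusion we want.

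For the matching lower bound, I apply Proposition \ref{forcrit} to the restriction of the $G$-action on $X_n^r$ to the maximal torus $T$, yielding $\dim H(X_n^r) \geq \dim H(X_n^{r,T})$. Since $(-\id)^n \in \{\pm \id\}$, Lemma \ref{twotoris} identifies $X_n^{r,T} \cong T^n \times \Z_2$, whose total Betti number is $2 \cdot 2^n = 2^{n+1}$. Combining the two bounds forces $\dim H(X_n^r) = 2^{n+1}$, so by the equality case above every connecting map in (\ref{hohum}) vanishes, giving the desired short exact sequences in each degree.

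The only nontrivial observation is that Borel's inequality, fed by the elementary fixed-point calculation in Lemma \ref{twotoris}, supplies exactly the lower bound needed to close the squeeze. No direct analysis of the connecting maps or of the restriction $H^d(X_n^r) \to H^d(X_{n-1}^s)$ is required, and the proof becomes purely numerical once these ingredients are assembled.
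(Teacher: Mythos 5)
Your proof is correct and follows essentially the same route as the paper: an upper bound $\dim H(X_n^r)\leq 2^{n+1}$ from the exact sequences (with equality forcing the short exact sequences), matched against the lower bound from Proposition \ref{forcrit} and the fixed-point computation of Lemma \ref{twotoris}. You simply make explicit the dimension-count behind the upper bound, which the paper states without elaboration.
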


\begin{proof}
If $\dim H(X_{n-1}^s) = 2^n$, then the exact sequences (\ref{hohum}) imply that $\dim H(X_{n}^r ) \leq 2^{n+1}$ with equality if and only if they extend to short exact sequences.

On the other hand, by Proposition \ref{forcrit} we have a lower bound $ \dim H(X_{n}^r) \geq \dim H((X_{n}^r)^T)$, where $(X_{n}^r)^T$ is the $T$-fixed point set under the conjugation action of $T$ on $X_{n}^r$. We showed in Lemma \ref{twotoris} that $(X_{n}^r)^T$ is isomorphic to two disjoint copies of $T^n$.  In particular $\dim H((X_{n}^r)^T) = 2^{n+1}$ completing the proof.
\end{proof}

Strictly speaking, Proposition \ref{forcrit} is true for Cech cohomology with rational coefficients which coincides with singular cohomology in our case.

We now have all the preliminary results necessary to compute the Betti numbers. Recall that for a topological space $X$, the (ordinary, rational) \emph{Poincar\'e polynomial} $P_t(X) = \sum_{i=0}^{\infty} b_it^i$, where $b_i = \dim H^i(X)$ is the $i$th Betti number of $X$.

\begin{proof}[Proof of Theorem \ref{onepoly}]
The proof uses induction on $n$.  First of all, $X_0^r \cong S^0$ and $X_0^s \cong S^2$ so the theorem holds in this case.

Assume now that the theorem holds when $n=k-1$ where $k\geq 1$, so in particular $P_t(X_{k-1}^s) \cong (1+t^3)^{k-1} + t^2(t+t^2)^{k-1}$ and $\dim H(X_{k-1}^s) = P_1(X_{k-1}^s) = 2^n$. By Lemma \ref{ineqqq} we get

\begin{equation*}
P_t(X_{k}^r) = P_t(X_{k-1}^s) + t^{3k}P_{t^{-1}}(X_{k-1}^s) = (1+t^3)^{k} + (t+t^2)^{k}
\end{equation*}
as desired. It remains to determine $P_t(X_k^s)$.  We deduce from the long exact sequence (\ref{les2}) and Lemma~\ref{tauinv} that:

\begin{equation*}
P_t(G^{k+1}, X_k^r)= P_t(G^{k+1}) + tP_t(X_k^r) -(1+t)P_t(G^k) = t^3(1+t^3)^k + t(t+t^2)^k
\end{equation*}
which by (\ref{step1}) is equivalent to
\begin{equation*}
P_t(X_k^s) = t^{k+3}P_{t^{-1}}(G^{k+1}, X_k^r) = (1+t^3)^k + t^2(t +t^2)^k
\end{equation*}
completing the induction.
\end{proof}



\subsection{Characteristic 2 coefficients}\label{rockthecasbah}
We saw at the end of \S \ref{Regular and singular}, that for $X_0^r$, $X_0^s$ and $X_1^r$ the cohomology is torsion free so in these cases the Poincar\'e polynomial in Theorem \ref{onepoly} remains valid for characteristic 2 coefficients. The next proposition shows that it does not for the remaining cases.

\begin{prop}\label{lettieri}
The homology groups $H_2(X_n^r, \Z)$ for $n \geq 2$ and $H_2(X_n^s, \Z)$ for $n \geq 1$ contain $2$-torsion.
\end{prop}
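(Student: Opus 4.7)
My plan is to prove the proposition by induction on $n$, with the base case requiring a direct geometric computation and the inductive step propagating the 2-torsion via explicit embeddings.

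For the base case $X_1^s = \{(g_0, g_1) \in SU(2)^2 : g_0^2 g_1^2 = \id\}$, I would carry out a concrete geometric decomposition. The squaring map $sq : SU(2) \to SU(2)$, $g \mapsto g^2$, is a 2-to-1 branched cover whose only singular fibre $sq^{-1}(-\id)$ is the sphere $S^2 \subset SU(2)$. Case analysis of the defining equation gives: when $g_0^2 \neq -\id$, the constraint forces $g_1 \in \{g_0^{-1}, -g_0^{-1}\}$, yielding two hemispherical 3-disks for each sign (parameterized by the two hemispheres of $SU(2) \setminus S^2$); when $g_0^2 = -\id$, we have $g_0, g_1 \in S^2$, contributing an $S^2 \times S^2$ piece. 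The matched pairs of hemispheres assemble into two embedded 3-spheres attached to $S^2 \times S^2$ along the anti-diagonal $-\Delta = \{(\hat n, -\hat n)\}$ (from the $g_1 = g_0^{-1}$ branch, since $g_0^{-1} \to -\hat n$ as $g_0 \to (0, \hat n) \in S^2$) and the diagonal $\Delta = \{(\hat n, \hat n)\}$ (from $g_1 = -g_0^{-1}$) respectively. In $H_2(S^2 \times S^2; \Z) = \Z\langle A, B\rangle$, intersection computations give $[\Delta] = A + B$ and $[-\Delta] = A - B$. A Mayer--Vietoris calculation tracking the two 3-cell attachments to each of $\pm\Delta$ then yields $H_2(X_1^s; \Z) \cong \Z^2/\langle A + B,\ A - B\rangle \cong \Z/2$, with $H_3 \cong \Z^2$ and $H_4 \cong \Z$ consistent with $P_t(X_1^s) = 1 + 2t^3 + t^4$.

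For the inductive step (the cases $X_n^s$ with $n \geq 2$ and $X_n^r$ with $n \geq 2$), I would exhibit natural embeddings $\iota : X_1^s \hookrightarrow X_n^{r/s}$. For $X_n^s$, use $(g_0, g_1) \mapsto (g_0, g_1, J, \ldots, J)$ with $J^2 = -\id$: the constraint $g_0^2 g_1^2 (-\id)^{n-1} = (-\id)^{n+1}$ is equivalent to $g_0^2 g_1^2 = \id$, so the image lands in $X_n^s$. For $X_n^r$ with $n \geq 2$, use $(g_0, g_1) \mapsto (g_0, g_1, \id, J, \ldots, J)$, landing in $X_n((-\id)^n) = X_n^r$ by a parity check. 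Since $\iota_*$ sends the 2-torsion class $[A] \in H_2(X_1^s; \Z) = \Z/2$ to a class whose image is again 2-torsion, it suffices to verify $\iota_*[A] \neq 0$ in $H_2(X_n^{r/s}; \Z)$. The cleanest verification is to produce a class $\eta \in H^2(X_n^{r/s}; \F_2)$ with $\iota^*\eta \in H^2(X_1^s; \F_2) \cong \F_2$ the nonzero generator, which pairs nontrivially with the $\F_2$-reduction of $\iota_*[A]$.

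The main obstacle is producing such a class $\eta$ and checking that $\iota^*\eta$ is nontrivial. A robust alternative is to argue via the integral form of the exact sequences (\ref{les2}) and (\ref{hohum}) from \S\ref{bettys}: these remain valid over $\Z$ (since Poincar\'e duality applies to the smooth closed manifold $X_n^r$ and to the pair $(G^{n+1}, X_n^r)$), but the rational splitting used in Lemma \ref{ineqqq} fails integrally. Tracking the 2-torsion in $H_2(X_{n-1}^s; \Z)$ (inductive hypothesis) through the connecting maps in (\ref{hohum}) at degrees $d = 2$ and $d = 3n-2$, combined with Poincar\'e duality $H^{3n-2}(X_n^r; \Z) \cong H_2(X_n^r; \Z)$, yields the required 2-torsion in $H_2(X_n^r; \Z)$; the analogous analysis of (\ref{les2}) propagates 2-torsion from $X_n^r$ to $X_n^s$.
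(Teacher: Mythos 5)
Your explicit decomposition of $X_1^s$ into $S^2\times S^2$ together with the two embedded $3$-spheres $\{(g,g^{-1})\}$ and $\{(g,-g^{-1})\}$, meeting it along the anti-diagonal and the diagonal, is correct, and the Mayer--Vietoris computation does give $H_2(X_1^s;\Z)\cong\Z/2$ (finer information than the paper records). But the proposition concerns all $n$, and both of your mechanisms for the inductive step fail exactly at the decisive point. For the embedding route, the nontriviality of $\iota_*$ on the torsion class \emph{is} the problem: pushing a torsion class forward along an embedding can perfectly well kill it, and you concede you cannot produce the detecting class $\eta$. For the ``robust alternative,'' the sequences (\ref{les2}) and (\ref{hohum}) do hold integrally, but the conclusion is not degree bookkeeping. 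To inject the torsion of $H_2(X_{n-1}^s;\Z)$ into $H_2(X_n^r;\Z)$ you must kill the relevant connecting homomorphism; with your pair $(X_n^r,X_{n-1}^s)$ at $d=3n-2$ the obstruction group is $H^{3n-3}(X_{n-1}^s;\Z)$, the top degree of a $(3n-3)$-dimensional space, which is nonzero. The paper's version of this step shifts the pair up one level: it uses $(X_{n+1}^r,X_n^s)$ and the vanishing $H^{3n+2}(X_n^s;\Z)=0$, proved by a dimension count (smooth locus of dimension $3n$, singular locus $(S^2)^{n+1}$ of dimension $2n+2$), to conclude via (\ref{threerrrs}) that $H_2(X_n^s;\Z)\to H_2(X_{n+1}^r;\Z)$ is injective. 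Some substitute for this vanishing is required in your scheme and is missing.

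Your direction of propagation in the singular case is also wrong in substance. By (\ref{step1}) and the vanishing of $H^{3n+1}(G^{n+1};\Z)$ (the cohomology of $G^{n+1}$ is concentrated in degrees divisible by $3$), one has $H_2(X_n^s;\Z)\cong H^{3n+1}(G^{n+1},X_n^r;\Z)\cong \mathrm{coker}\bigl(H^{3n}(G^{n+1};\Z)\to H^{3n}(X_n^r;\Z)\cong\Z\bigr)$, a cokernel of a map of torsion-free groups; no torsion already present in $X_n^r$ can be ``propagated'' into it through (\ref{les2}). The torsion must be manufactured, and the paper does this by showing the image of the restriction lies in $2H^{3n}(X_n^r;\Z)$: each top-degree generator of $H^{3n}(G^{n+1};\Z)$ is pulled back from a projection $G^{n+1}\to G^n$ forgetting one factor, and each such projection restricts on $X_n^r$ to a degree-two map (the cohomological double cover $\rho$ of Proposition \ref{somemanyprops} for the appropriate coordinate). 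This settles all $X_n^s$, $n\ge 1$, at once, with no induction and no base case, after which the regular case follows as above. So your base case is a pleasant check, but as written the proposal does not prove the statement for any $n\ge 2$.
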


\begin{proof}
Let $\rho' \co G^{n+1} \rightarrow G^n$ denote the projection map of Proposition \ref{somemanyprops}, and the degree 2 map $\rho\co X_n^r \rightarrow G^n$ its restriction under the inclusion $i \co X_n^r \hookrightarrow G^{n+1}$. This induces a map in cohomology:

\begin{equation*}\begin{CD}
H^{3n}(G^{n+1},\Z) @> i^* >> H^{3n}(X_n^r, \Z) \cong \Z \\
 @AA \rho'^*A             @AA \rho^* = 2 A \\
H^{3n}(G^n, \Z)   @>>>   H^{3n}(G^n, \Z) \cong \Z
\end{CD}\end{equation*}
Thus those generators in the image of $\rho'^*$ map to $2 H^{3n}(X_n^r, \Z)$ under $i^*$. Of course we can replace $\rho$ in this argument with projection onto any other set of factors, so for $n \geq 1$ we can show that $\im(i^*) \subset 2 H^{3n}(X_n^r, \Z)$.  Thus the LES of the pair $ i \co X_n^r \hookrightarrow G^{n+1}$ gives rise to two torsion in $ H^{3n+1}(G^{n+1}, X_n^r ; \Z)$, which by (\ref{step1}) is isomorphic to $H_{2}(X_n^s ; \Z)$.  This completes the proof for $X_n^s$ and $n \geq 1$.

For the remaining case, consider the LES in homology for the pair $(X_{n+1}^r, X_n^s)$. In particular, we have an exact sequence:

\begin{equation}\label{beckod}
H_1(X_{n+1}^r, X_n^s ; \Z) \rightarrow H_2(X_n^s; \Z)  \stackrel{\zeta}{\rightarrow} H_2(X_{n+1}^r; \Z)
\end{equation}
and by (\ref{threerrrs}), $H_1(X_{n+1}^r, X_n^s ; \Z) \cong H^{3n+2}(X_n^s)$. The smooth locus of $X_n^s$ has dimension $3n$ while the singular locus is isomorphic to $(S^2)^{n+1}$ and has dimension $2(n+1)$.  Consequently, for $n \geq 1$, $H^{3n+2}(X_n^s)=0$ and so the map $\zeta$ of (\ref{beckod}) is an inclusion, which gives rise to $2$-torsion in $H_2(X_{n+1}^r; \Z)$ for $n+1 \geq 2$.
\end{proof}

\begin{rmk}
One of the most common methods of proving equivariant formality for a torus action on a manifold, is to produce a Morse-Bott function whose critical points coincide with the fixed points. Proposition \ref{lettieri} precludes the existence of such a Morse function for $X_n^r$ when $n > 1$ because the fixed point set $X_n^{rT}$ is torsion free and so such a function would contradict the Morse inequalities in characteristic $2$.
\end{rmk}

\subsection{Proof of Theorem \ref{ale}}\label{morgau}

Recall notation from \S \ref{resout} . Let $\mathcal{A}$ be the space of connections on a trivial principal $G$-bundle over the nonorientable surface $\Sigma_n$, with gauge group $\gau$.

\begin{proof}[Proof of Theorem \ref{ale}]
The projection map $X_n(+\id) \rightarrow G^n$ is isomorphic to a map $Hom(\pi_1(\Sigma_n),G) \rightarrow Hom(\pi_1(\Sigma^o_n),G)$ induced by the inclusion of surfaces $\Sigma^o_n \hookrightarrow \Sigma_n$ where $\Sigma_n$ be a connected sum of $n+1$ copies of $\R P^2$ and $\Sigma^o_n = \Sigma_n - M$ is a surface with boundary obtained by removing a Mobius band which is the tubular neighbourhood of a loop representing the $0$th generator of the fundamental group of $\Sigma_n$. 

The inclusion $j \co \Sigma_n^o \hookrightarrow \Sigma_n$ induces an isomorphism in rational cohomology $H(\Sigma_n^o;\Q) \cong H(\Sigma_n;\Q)$ so by the methods of Atiyah-Bott (\cite{ab2} sect.2), $j$ induces an isomorphism $H(B\mathcal{G}^o) \cong H(B\mathcal{G})$. 

Finally, because $\Sigma_n^o$ deformation retracts onto a wedge of circles, restriction induces an isomorphism $H(B\mathcal{G}^o)\cong H_{\mathcal{G}^o}(\mathcal{A}_{\Sigma^o}) \cong H_{\mathcal{G}^o}(\mathcal{A}_{\Sigma^o}^{flat}) \cong H_G(G^n)$. To summarize we get a commutative diagram

\begin{equation*}\begin{CD}
 H(B\mathcal{G}) @> \kappa >> H_{\mathcal{G}}(\mathcal{A}^{flat}_{\Sigma}) \cong H_G( X_n(+\id))\\
   @AA\cong A                                      @AA\rho^*A \\
H(B\mathcal{G}^o) @>\cong >> H_{\mathcal{G}^o}(\mathcal{A}^{flat}_{\Sigma^o}) \cong H_G(G^n).
\end{CD}\end{equation*}
Thus $\kappa$ is injective if and only if $\rho^*$ is. This was shown for $SU(2)$ in Proposition \ref{somemanyprops}, and for general compact simply connected G in Remark \ref{MIA}.

\end{proof}


\subsection{Relationship between $X_n^r$ and $X_n^s$}\label{relationships1}

Recall (\ref{easy2}), that $X_0^s \cong S^2$ and that $\Z_2$ acts via the antipodal map on $X_0^s$. Let $\Z_2$ act diagonally on $X_n^r \times X_0^s$ sending $(g_0,...,g_n) \times (h) \rightarrow (-g_0,..,g_n) \times (-h)$, and denote the orbit space by $X_n^r \times_{\Z_2} X_0^s$.
In this section we prove 

\begin{prop}\label{mikei}
There is an isomorphism
\begin{equation}\label{mikej}
H_T(X_n^r \times_{\Z_2} X_0^s) \cong H_T(X_n^s).
\end{equation}
Furthermore,  using the diffeomorphism between fixed point set described in (\ref{montana}) we obtain a commutative diagram

\begin{equation}\begin{CD}\label{mikek}
H_T(X_n^r \times_{\Z_2} X_0^s) @>>> H_T((X_n^r \times_{\Z_2} X_0^s)^T)\\
 @VV^{\cong}V                             @VV^{\cong}V\\
H_T(X_n^s)             @>>>   H_T((X_n^s)^T)
\end{CD}\end{equation}
where the horizontal arrows are the fixed point localization maps induced by inclusion. A similar diagram is valid for ordinary cohomology. 
\end{prop}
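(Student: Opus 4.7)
The plan is to show both sides of (\ref{mikej}) are $T$-equivariantly formal free $H(BT)$-modules of the same rank with canonically identified $T$-fixed point sets, and then match their two images in $H_T$ of that common fixed point set. Equivariant formality of $X_n^s$ is Corollary \ref{torefer}, so the first task is the analogous statement for $X_n^r \times_{\Z_2} X_0^s$.

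To compute $H^*(X_n^r \times_{\Z_2} X_0^s)$, note that the diagonal $\Z_2$-action is free since it sends $h \mapsto -h \neq h$ on the $X_0^s \cong S^2$ factor, so Theorem \ref{cov} together with the rational K\"unneth formula gives
\[
H^*(X_n^r \times_{\Z_2} X_0^s) \cong \bigl(H^*(X_n^r) \otimes H^*(X_0^s)\bigr)^{\Z_2}.
\]
By the remark after Theorem \ref{onepoly}, $H^*(X_n^r)$ splits into $\pm 1$ eigenspaces with Poincar\'e polynomials $(1+t^3)^n$ and $(t+t^2)^n$ respectively. The antipodal action on $S^2$ is a degree $-1$ map, so it is trivial on $H^0$ and acts by $-1$ on $H^2$. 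Taking invariants of the tensor product yields
\[
P_t(X_n^r \times_{\Z_2} X_0^s) = (1+t^3)^n \cdot 1 + (t+t^2)^n \cdot t^2 = P_t(X_n^s),
\]
hence total Betti number $2^{n+1}$.

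Because the $T$- and $\Z_2$-actions commute, the $T$-fixed locus factorises as
\[
(X_n^r \times_{\Z_2} X_0^s)^T = X_n^{rT} \times_{\Z_2} X_0^{sT},
\]
which Lemma \ref{popeurban} identifies diffeomorphically with $X_n^{sT}$. Lemma \ref{twotoris} gives this fixed point set total Betti number $2^{n+1}$, matching the full space, so Proposition \ref{forcrit} shows $X_n^r \times_{\Z_2} X_0^s$ is $T$-equivariantly formal. Both $H_T(X_n^r \times_{\Z_2} X_0^s)$ and $H_T(X_n^s)$ are therefore free $H(BT)$-modules of rank $2^{n+1}$, each injecting via localization into the common target $H_T(X_n^{sT})$.

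To finish I would verify the two images agree as subrings of $H_T(X_n^{sT})$. The image from $X_n^r \times_{\Z_2} X_0^s$ unwinds through equivariant K\"unneth and $\Z_2$-invariants as
\[
\bigl(\mathrm{Im}(H_T(X_n^r) \to H_T(X_n^{rT})) \otimes_{H(BT)} \mathrm{Im}(H_T(X_0^s) \to H_T(X_0^{sT}))\bigr)^{\Z_2},
\]
while the image from $X_n^s$ is the one computed explicitly in \S \ref{local}. The main obstacle is matching these two algebraic descriptions under the diffeomorphism of Lemma \ref{popeurban}; I expect this to reduce to an explicit comparison of generators coming from equivariant Euler classes of fixed point components, which depend only on the $T$-weights at the common fixed point set. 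Once the images coincide, the isomorphism in (\ref{mikej}) is the composition through this common subring, the diagram (\ref{mikek}) commutes tautologically because both vertical arrows factor through the same identification of images, and the ordinary cohomology version follows by setting the generators of $H(BT)$ to zero using equivariant formality.
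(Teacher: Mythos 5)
Your preliminary steps are sound: the computation $P_t(X_n^r \times_{\Z_2} X_0^s) = (1+t^3)^n + t^2(t+t^2)^n$ via Theorem \ref{cov} and the eigenspace decomposition, the identification $(X_n^r \times_{\Z_2} X_0^s)^T \cong X_n^{rT}\times_{\Z_2} X_0^{sT} \cong X_n^{sT}$ via Lemma \ref{popeurban}, and the resulting equivariant formality by Proposition \ref{forcrit} all check out, and these facts are also used (implicitly) in the paper. But the heart of the proposition is exactly the step you defer: two equivariantly formal $T$-spaces with diffeomorphic fixed point sets and equal Betti numbers need not have isomorphic equivariant cohomologies compatible with localization; you must actually identify the two images inside $H_T(X_n^{sT})$, and you do not carry this out. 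Worse, the natural way to fill the gap --- citing the explicit image computations of \S \ref{local} --- is circular: the induction in \S \ref{antiinvariantpart} proving Propositions \ref{razersnik} and \ref{razersnik2} uses Proposition \ref{mikei} itself (via (\ref{urbantwo})) to pass from the image for $X_n^r$ to the image for $X_n^s$, so neither $\im(i^*_{s-})$ nor, through the alternating induction, $\im(i^*_{r-})$ is available independently of the statement you are trying to prove. Your fallback suggestion of comparing equivariant Euler classes at fixed components is also not routine here, since $X_n^s$ is singular (its singular locus is $(S^2)^{n+1}$), so the usual normal-data arguments do not apply directly.

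The paper avoids any image computation by a correspondence: it introduces $Z \subset X_n^r \times_{\Z_2} X_0^s$, the locus where $g_0$ and $h$ commute, together with the surjection $\phi(\pm g_0,\dots,g_n)\times(\pm h) = (hg_0, g_1,\dots,g_n)$ onto $X_n^s$, whose fibres are rationally acyclic ($\R P^2$ or a point); the strong cohomological orbit space argument of Proposition \ref{somemanyprops} then gives $H_T(X_n^s)\cong H_T(Z)$, while the inclusion $Z \hookrightarrow X_n^r\times_{\Z_2}X_0^s$ contains the whole $T$-fixed set, so injectivity of localization plus your Betti count gives $H_T(X_n^r\times_{\Z_2}X_0^s)\cong H_T(Z)$, and commutativity of (\ref{mikek}) is automatic since both isomorphisms factor through $Z$ compatibly with the fixed point inclusions. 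Some such geometric construction (or an independent computation of the $X_n^s$ localization image) is what your proposal is missing.
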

The isomorphism (\ref{mikej}) is not induced by a direct map of spaces. Rather, we introduce a third space $Z$ and an equivariant correspondence diagram

\begin{equation}\begin{CD}
\xymatrix{ & Z  \ar[dr] \ar[dl]^{\phi} &  \\
X_n^s&   & X_n^r \times_{\Z_2} X_0^s }
\end{CD}\end{equation}
which induces (equivariant and ordinary) cohomology isomorphisms in both directions and whose restrictions to fixed points are diffeomorphisms.

Define the subvariety $ Z \subset X_n^r \times_{\Z_2} X_0^s$ by
$$ Z = \{ (\pm g_0,...,g_n) \times (\pm h) | g_0h = hg_0\}.$$ 
We obtain a surjective map
$$ \phi : Z \rightarrow X_n^s$$
defined by $ \phi ( \pm g_0,..., g_n) \times ( \pm h)) = (h g_0,..., g_n)$. One may readily verify that:

\begin{equation}
\phi^{-1}(g_0,...,g_n) \cong  \begin{cases} S^2/\Z_2 \cong \R P^2 \text{       ,  if  $g_n = \pm \id$}\\
					S^0 /\Z_2 \cong pt \text{      ,   otherwise}
					\end{cases}
\end{equation}
So by the same reasoning as in Proposition \ref{somemanyprops}, we get
\begin{equation}\label{blur10}
 H_T(X_n^s) \cong H_T(Z).
\end{equation}
which implies similar isomorphisms in ordinary cohomology.
Now consider the inclusion $ Z \hookrightarrow X_n^r \times_{\Z_2} X_0^s$.  The fixed point set 

\begin{equation}\label{blur11}
(X_n \times_{\Z_2} X_0^s)^T = \{ (\pm t_0, ..., t_n)\times (\pm h)| t_i, h \in T\} \cong (X_n^r)^T
\end{equation}
is contained in $Z$. In particular, the localization map $H_T( X_n \times_{\Z_2} X_0^s) \rightarrow H_T( (X_n \times_{\Z_2} X_0^s)^T)$ factors through $H_T(Z)$ and is injective by equivariant formality. Comparing Betti numbers, we get 

\begin{equation}\label{blur12}
  H_T(X_n^r \times  X_0^s)^{\Z_2} \cong H_T(X_n^r \times_{\Z_2} X_0^s) \cong H_T( Z)
\end{equation}
and a similar isomorphism for ordinary cohomology.
\begin{proof}[Proof of Proposition \ref{mikei}]

Equation (\ref{mikej}) follows from (\ref{blur10}) and (\ref{blur12}). Commutativity of (\ref{mikek}) follows from (\ref{blur11}). 
\end{proof}

To better exploit this result, we will make use of a lemma.

\begin{lem}\label{urbano}
Let $X$, $Y$ be equivariantly formal $T$-spaces. Then $X \times Y$ equipped with the diagonal action is also equivariantly formal and $$ H_T(X \times Y) \cong H_T(X) \otimes_{H(BT)} H_T(Y) $$
as modules over $H(BT) = H_T(pt)$.
\end{lem}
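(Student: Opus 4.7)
The plan is to first verify equivariant formality of $X\times Y$ with its diagonal action via the numerical criterion of Proposition \ref{forcrit}, and then exhibit an explicit comparison map $\mu$ that equivariant formality forces to be an isomorphism.

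For equivariant formality, observe that the diagonal $T$-fixed locus is $(X\times Y)^T = X^T\times Y^T$, so the ordinary K\"unneth theorem gives
\[\dim H^*(X\times Y) = \dim H^*(X)\cdot \dim H^*(Y),\qquad \dim H^*((X\times Y)^T) = \dim H^*(X^T)\cdot \dim H^*(Y^T).\]
Equivariant formality of $X$ and $Y$ individually gives $\dim H^*(X)=\dim H^*(X^T)$ and $\dim H^*(Y)=\dim H^*(Y^T)$ by Proposition \ref{forcrit}, so multiplying these equalities yields $\dim H^*(X\times Y)=\dim H^*((X\times Y)^T)$, and Proposition \ref{forcrit} then applies to conclude that $X\times Y$ is equivariantly formal.

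To identify the equivariant cohomology, the $T$-equivariant projections $p_X\co X\times Y\to X$ and $p_Y\co X\times Y\to Y$ induce $H(BT)$-algebra homomorphisms $p_X^*\co H_T(X)\to H_T(X\times Y)$ and $p_Y^*\co H_T(Y)\to H_T(X\times Y)$. Since both are $H(BT)$-linear and the target is graded-commutative, the assignment $(a,b)\mapsto p_X^*(a)\cdot p_Y^*(b)$ is $H(BT)$-bilinear and descends to a well-defined $H(BT)$-module map
\[\mu\co H_T(X)\otimes_{H(BT)} H_T(Y) \longrightarrow H_T(X\times Y).\]

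The main task is to show $\mu$ is an isomorphism. Equivariant formality makes each of $H_T(X), H_T(Y), H_T(X\times Y)$ a free $H(BT)$-module whose graded rank matches the corresponding ordinary Poincar\'e polynomial, and the ordinary K\"unneth formula together with the previous paragraph shows that both sides of $\mu$ have the same Poincar\'e series over $H(BT)$. To actually verify bijectivity I would invoke Leray--Hirsch: lift an ordinary basis $\{\alpha_i\}$ of $H^*(X)$ to classes $\widetilde\alpha_i\in H_T(X)$ and similarly $\widetilde\beta_j\in H_T(Y)$, and note that the products $p_X^*(\widetilde\alpha_i)\cdot p_Y^*(\widetilde\beta_j)$ restrict on the fiber of the Borel fibration $X\times Y\to ET\times_T(X\times Y)\to BT$ to the K\"unneth basis of $H^*(X\times Y)$. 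By Leray--Hirsch they therefore form a free $H(BT)$-module basis of $H_T(X\times Y)$, which exhibits $\mu$ as a surjection between free $H(BT)$-modules of equal graded rank, hence an isomorphism. The subtlest point I expect is confirming that the cup product of the pullbacks realizes the module tensor product over $H(BT)$ rather than over $\Q$, but this is immediate from the $H(BT)$-linearity of $p_X^*$ and $p_Y^*$.
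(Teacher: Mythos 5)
Your proof is correct, but it takes a genuinely different route from the paper. The paper's own argument is a one-line appeal to the Eilenberg--Moore spectral sequence: the Borel construction of the diagonal action is the fibre product $ET\times_T(X\times Y)\simeq (ET\times_T X)\times_{BT}(ET\times_T Y)$ over the simply connected base $BT$, so the spectral sequence with $E_2=\mathrm{Tor}_{H(BT)}(H_T(X),H_T(Y))$ converges to $H_T(X\times Y)$; freeness of $H_T(X)$ and $H_T(Y)$ over $H(BT)$ (equivariant formality) kills all higher $\mathrm{Tor}$ terms, so it collapses to the tensor product, and formality of the product follows since a tensor product of free $H(BT)$-modules is free. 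You instead build the comparison map $\mu$ from the two projections and verify it is an isomorphism by a Leray--Hirsch basis-lifting argument in the Borel fibration of $X\times Y$; this is more elementary and explicit, and in fact your second step alone already proves equivariant formality of the product, so the opening dimension count via Proposition \ref{forcrit} is redundant. The trade-off is that your route quietly uses compactness/finite-type hypotheses (Proposition \ref{forcrit} is stated for compact Hausdorff spaces, and K\"unneth and Leray--Hirsch need finite-dimensional cohomology in each degree), which hold in all the paper's applications but are not in the statement of the lemma, whereas the Eilenberg--Moore argument needs only the freeness hypothesis together with standard conditions on the fibre square. Both proofs are sound; yours buys an explicit isomorphism realized by cup products of pullbacks, the paper's buys brevity and slightly greater generality.
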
 

\begin{proof}
Because $H_T(X)$ and $H_T(Y)$ are free modules over $H(BT)$, the result follows by an Eilenberg-Moore spectral sequence argument.
\end{proof}
Combining Proposition \ref{mikei} with Lemma \ref{urbano} we obtain

\begin{equation}\label{urbantwo}
 H_T(X_n^s) \cong (H_T(X_n^r)_+ \otimes_{H(BT)} H_T(X_0^s)_+) \oplus (H_T(X_n^r)_- \otimes_{H(BT)} H_T(X_0^s)_-).
\end{equation}

\subsection{Relationship between $X_n^r$ and $X_1^r$}\label{relationships2}

In this section, we state a factorization theorem for the (ordinary and equivariant) cohomology of $X_n^r$ and $X_n^s$. The proof is postponed until \S \ref{chapter6}.

\begin{lem}\label{wat}
For $n\geq 1$ there is an isomorphism 
\begin{equation}
H_T( X_{n}^r) \cong H_T( X_{n-1}^r \times_{\Z_2} X_1^r)
\end{equation}
which is natural with respect to localization. A similar isomorphism holds for ordinary cohomology.
\end{lem}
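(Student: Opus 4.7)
The plan is to exploit $T$-equivariant formality on both sides and reduce to a comparison inside $H_T$ of a common fixed point set.

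First I would establish $T$-equivariant formality of $X_{n-1}^r \times_{\Z_2} X_1^r$. By Theorem \ref{cov} its rational cohomology is the $\Z_2$-invariant subring of $H(X_{n-1}^r) \otimes H(X_1^r)$, which using the eigenspace decomposition $H(X_m^r) = H(X_m^r)_+ \oplus H(X_m^r)_-$ (cf.\ the Remark after Theorem \ref{onepoly}) splits as
\[
(H(X_{n-1}^r)_+ \otimes H(X_1^r)_+) \oplus (H(X_{n-1}^r)_- \otimes H(X_1^r)_-),
\]
with Poincar\'e polynomial $(1+t^3)^n + (t+t^2)^n = P_t(X_n^r)$. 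Using Lemma \ref{popeurban} together with the fact that the connected torus $T$ cannot permute a $2$-element $\Z_2$-orbit, the $T$-fixed set of the quotient equals the $\Z_2$-quotient of the fixed sets and is canonically diffeomorphic to $(X_n^r)^T$. Proposition \ref{forcrit} then yields $T$-equivariant formality.

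Next, by equivariant formality of both sides the localization maps
\[
H_T(X_n^r) \hookrightarrow H_T((X_n^r)^T), \qquad H_T(X_{n-1}^r \times_{\Z_2} X_1^r) \hookrightarrow H_T((X_{n-1}^r \times_{\Z_2} X_1^r)^T)
\]
are injective, and the diffeomorphism of fixed sets from Lemma \ref{popeurban} identifies their targets. It therefore suffices to show that the two images coincide as subrings of $H_T((X_n^r)^T)$; the resulting isomorphism $H_T(X_n^r) \cong H_T(X_{n-1}^r \times_{\Z_2} X_1^r)$ is then automatically natural with respect to localization, and descends to ordinary cohomology by dividing out the ideal of positive degree elements in $H(BT)$.

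To identify the images, I would use the explicit description of the localization image developed in \S\ref{local} together with the bigraded structure of Corollary \ref{bigradient}: exhibit compatible sets of generators of both subrings in terms of the fixed point data and verify that their images in $H_T((X_n^r)^T)$ span the same subring. A more geometric alternative modeled on Proposition \ref{mikei}---take $Z = \{((g_0,\ldots,g_{n-1}),(h_0,h_1)) \in X_{n-1}^r \times X_1^r : g_0 h_0 = h_0 g_0\}/\Z_2$ with map $\phi((g_0,\ldots,g_{n-1}),(h_0,h_1)) = (h_0 g_0, g_1, \ldots, g_{n-1}, h_1)$ landing in $X_n^r$---runs into trouble because the fibre of $\phi$ over $(f_0,\ldots,f_n)$ is empty whenever no square root of $-f_n^{-2}$ commutes with $f_0$, which happens generically when $f_0$ and $f_n$ lie in different maximal tori. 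Thus the core of the argument must be the explicit localization computation of \S\ref{local}; identifying the two images inside $H_T((X_n^r)^T)$ is the main obstacle.
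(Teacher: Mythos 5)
Your strategy is the same as the paper's: both sides are $T$-equivariantly formal with a common fixed point set, so the lemma reduces to showing that the two localization images inside $H_T((X_n^r)^T)$ coincide. Your preliminary steps are sound -- the formality of $X_{n-1}^r\times_{\Z_2}X_1^r$ via Theorem \ref{cov} and the Betti number count, the identification $(X_{n-1}^r\times_{\Z_2}X_1^r)^T\cong (X_{n-1}^r)^T\times_{\Z_2}(X_1^r)^T\cong (X_n^r)^T$ (using that $T$ is connected), and the observation that naturality and the descent to ordinary cohomology come for free. Your remark that a correspondence modelled on Proposition \ref{mikei} has generically empty fibres is also consistent with the paper, which concedes that it could not make that geometric argument work.

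The genuine gap is that you stop exactly where the proof has to do its work: you never compute the localization image for $X_{n-1}^r\times_{\Z_2}X_1^r$, and you call the comparison of images ``the main obstacle.'' That comparison is the entire content of the paper's argument, and it is not an obstacle once you push the equivariant K\"unneth theorem through to the level of images. By Lemma \ref{urbano} and Theorem \ref{cov}, $H_T(X_{n-1}^r\times_{\Z_2}X_1^r)_{\pm}\cong H_T(X_{n-1}^r)_{\pm}\otimes_{H(BT)}H_T(X_1^r)_{\pm}$, and this identification intertwines the localization map of the quotient with the tensor product (over $H(BT)$) of the localization maps of the factors on matching $\Z_2$-weight spaces; hence $\im(j_a^*)=\bigl(\im(i_{(n-1)+}^*)\otimes_{H(BT)}\im(i_{1+}^*)\bigr)\oplus\bigl(\im(i_{(n-1)-}^*)\otimes_{H(BT)}\im(i_{1-}^*)\bigr)$. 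Substituting the explicit bigraded descriptions from Propositions \ref{razersnik3} and \ref{razersnik}, the invariant part is $\bigoplus_{p\le q,\,k\le l}H^p(T^{n-1})\otimes H^k(T)\otimes H^{2(q+l)}(BT)=\bigoplus_{a\le b}H^a(T^n)\otimes H^{2b}(BT)$ and the skew part is $\bigoplus_{p+q\ge n-1,\,k+l\ge 1}H^p(T^{n-1})\otimes H^k(T)\otimes H^{2(q+l)}(BT)=\bigoplus_{a+b\ge n}H^a(T^n)\otimes H^{2b}(BT)$, which are precisely $\im(i^*_{+})$ and $\im(i^*_{r-})$ for $X_n^r$. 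The index identities are elementary: e.g.\ given $a+b\ge n$ and $a=p+k$ with $k\in\{0,1\}$, take $(q,l)=(b,0)$ if $k=1$ and $(q,l)=(b-1,1)$ if $k=0$. So the missing idea is simply to apply the K\"unneth splitting of Lemma \ref{urbano} to the images themselves; after that the identification of the two subrings is bookkeeping, not an open problem.
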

Lemma \ref{wat} is equivalent to (\ref{reduc}) from the introduction. By iterating Lemma \ref{wat} and using Proposition \ref{mikei}, we may reduce the general case to that of $n=1$. 

\begin{thm}\label{toref}
There are isomorphisms
$$ H_T(X_n^r) \cong H_T(X_1^r \times_{\Z_2} ... \times_{\Z_2}X_1^r)$$
and
$$ H_T(X_n^s) \cong H_T( X_1^r \times_{\Z_2} ... \times_{\Z_2} X_1^r \times_{\Z_2} X_0^s)$$
which are natural with respect to fixed point localization. Similar isomorphisms hold for ordinary cohomology.
\end{thm}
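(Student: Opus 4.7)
The plan is to prove both isomorphisms by induction on $n$, using Lemma \ref{wat} as the engine and Proposition \ref{mikei} to bridge the regular and singular cases. Throughout, I would exploit equivariant formality (Corollary \ref{torefer}) together with the explicit fixed-point decomposition of Lemma \ref{popeurban} to track the $\Z_2$-actions needed to iterate these identifications.

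For the first isomorphism, the base case $n = 1$ is a tautology. For the inductive step, I would apply Lemma \ref{wat} to get $H_T(X_n^r) \cong H_T(X_{n-1}^r \times_{\Z_2} X_1^r)$, then combine equivariant formality, Lemma \ref{urbano}, and Theorem \ref{cov} to rewrite
\begin{equation*}
H_T(X_{n-1}^r \times_{\Z_2} X_1^r) \cong \bigl( H_T(X_{n-1}^r) \otimes_{H(BT)} H_T(X_1^r) \bigr)^{\Z_2}.
\end{equation*}
Substituting the inductive hypothesis for $H_T(X_{n-1}^r)$ and unfolding the $\Z_2$-invariant tensor product once more then yields the desired iterated $\times_{\Z_2}$ form.

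The main obstacle is verifying that the isomorphism of Lemma \ref{wat} is truly compatible with $\Z_2$-actions, so that the substitution step above is legitimate. Here I would invoke the ``natural with respect to localization'' clause of Lemma \ref{wat}: equivariant formality gives an injection $H_T(X_n^r) \hookrightarrow H_T(X_n^{rT})$, and Lemma \ref{popeurban} identifies $X_n^{rT} \cong X_{n-1}^{rT} \times_{\Z_2} X_1^{rT}$ as $\Z_2$-spaces, where the involution is the coordinate flip of Lemma \ref{deckd}. Since the fixed-point formula of Lemma \ref{popeurban} is manifestly associative (as one sees from the explicit formula for $\phi$ in its proof), iterating produces a canonical $\Z_2$-action on the $n$-fold product, and the induction then closes by comparing the two sides eigenspace by eigenspace via localization.

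For the second isomorphism I would first apply Proposition \ref{mikei} to get $H_T(X_n^s) \cong H_T(X_n^r \times_{\Z_2} X_0^s)$, with naturality under localization supplied by diagram (\ref{mikek}). Substituting the first isomorphism, via the same Lemma \ref{urbano} decomposition as in (\ref{urbantwo}) and one more application of Lemma \ref{popeurban} (now with $\epsilon_1 \epsilon_2 = -\id$ so that we land in $X_n^{sT}$), produces the desired $n$-fold product capped off by $X_0^s$. The ordinary-cohomology analogue is handled by the same argument, using Künneth in place of $\otimes_{H(BT)}$ and the ordinary-cohomology versions of Lemma \ref{wat} and Proposition \ref{mikei}.
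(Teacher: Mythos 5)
Your proposal is correct and follows essentially the same route as the paper, whose entire argument for Theorem \ref{toref} is to iterate Lemma \ref{wat} and invoke Proposition \ref{mikei} to reduce to the $n=1$ case. The extra care you take with the $\Z_2$-compatibility of the iteration (via Lemma \ref{urbano}, Theorem \ref{cov}, and the fixed-point identification of Lemma \ref{popeurban}) is exactly the bookkeeping the paper leaves implicit, and it is consistent with how Lemma \ref{wat} is actually proved in \S \ref{chapter6} by comparing images of localization maps.
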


To see why we might expect Lemma \ref{wat} to be true, consider the following informal argument. 
Let  $$\pi_{n}: X_n^r \rightarrow SU(2)$$ be the projection map sending $(g_0,...,g_n)$ to $g_n$. As explained in Lemma \ref{iminyork}
, $\pi_n$ has fibres over $\pm \id$ isomorphic to $X_{n-1}^s$, and restricts to a trivial $X_{n-1}^r$ fibration over $ SU(2) \setminus \{ \pm \id\}$. 

Consider now the map $$ \kappa: X_{n-1}^r \times_{\Z_2} X_1^r \rightarrow SU(2)$$ obtained by projection $\kappa((\pm g_0,..., g_n)\times (\pm h_0, h_1)) = h_1$. The projection map factors through $ X_1^r/\Z_2$ which has fibres $X_{n-1}^r$ so by Proposition \ref{hardyharhar}, we see that $\kappa$ has fibres $ X_{n-1}^r \times_{\Z_2} S^2$ over $\pm \id$, and restricts to a trivial $ X_{n-1}^r$ fibration over $SU(2) \setminus \{\pm \id\}$. 

In view of Proposition \ref{mikei}, one might try to prove Lemma \ref{wat} by replacing the singular fibres of $\kappa$ with the singular fibres of $\pi_n$. Unfortunately, we have been unsuccessful in turning this informal argument into a formal proof. Instead, in the next section, we will compute the image of the localization map for $X_{n+1}^r$ using induction on $n$ and then compare with the image of the localization map for $X_{n}^r \times_{\Z_2} X_1^r$ under the identification $$ (X_{n+1}^r)^T \cong (X_{n}^r)^T \times_{\Z_2} (X_1^r)^T$$ described in (\ref{montana}) .

\section{ The localization map for $T$ action}\label{local}\label{chapter6}

In this Chapter we compute the image of the localization map in equivariant cohomology,

\begin{equation*}
i^*\co H_{T}(X) \rightarrow H_{T}(X^{T})
\end{equation*}
where $X= X_n^r$ or $X_n^s$, and $i\co X^{T} \hookrightarrow X$ is the inclusion of the fixed point set. As in Chapter \ref{chapter5}, we set $G = SU(2)$, and $T\subset G$ the maximal torus of diagonal matrices with Lie algebras $\lie{g}$ and $\lie{t}$ respectively. We will work with rational coefficients, though all the results can be extended to odd characteristic with more work.

The action of $\Z_2$ on $X$ defined in \S \ref{almostfinshed} commutes with the $T$ action, and so $i^*$ decomposes into a direct sum $i_+^* \oplus i_-^*$ :

\begin{equation*}
i^*_+ \co H_T(X)_+ \rightarrow H_T(X^T)_+ \cong H(T^n)\otimes H(BT)
\end{equation*}
and
\begin{equation*}
i^*_- \co H_T(X)_- \rightarrow H_T(X^T)_- \cong H(T^n)\otimes H(BT)
\end{equation*}
by (\ref{nearlydoneyeah2}), where we use subscripts $\pm $ to denote the invariant and skewinvariant weight spaces.

Our strategy will be to consider the invariant and skewinvariant parts of $i^*$ separately, where $i^*_+$ can be understood directly and $i^*_-$ requires an induction argument. We will find that not only does $i^*$ respect the $\Z_2\oplus \Z$ grading on $H_T(X^T)$, but also the $\Z_2 \oplus \Z^2$ grading, proving Corollary \ref{bigradient}.

\begin{rmk}
Because $Z(G)$ acts trivially, it may seem more natural to consider instead the effective conjugation action by $T/Z(G)$. However the only differences in cohomology will be 2-torsion and since we work with rational coefficients it won't make any difference.
\end{rmk}

\subsection{Invariant part}\label{invtpart}
Let $X$ denote one of representation varieties $X_n^r$ or $X_n^s$ defined in \S $\ref{Regular and singular}$. As was pointed out in (\ref{tauinv}), the projection map $\rho$ defines an isomorphism between $H(X)_+ = H(X)^{\Z_2}$ and $H(G^n)$.  Because $\rho$ is $T$-equivariant we deduce

\begin{equation*}
\rho^*\co H_{T}(G^n) \cong H_{T}(X)_+.
\end{equation*}
By (\ref{deckd}), the restriction of $\rho$ to $X^T$ determines a trivial double cover $\rho_T\co X^T \rightarrow T^n$. Thus

\begin{equation*}
\rho^*_T\co H_{T}(T^n) \cong H_{T}(X^T)_+.
\end{equation*}
This fits into the commutative diagram:

\begin{equation}\begin{CD}\label{werter}
\xymatrix{ H_{T}(X)_+  \ar[r]^{i^*} & H_{T}(X^T)_+ \\
       H_{T}(G^n) \ar[r]^{j^*}\ar[u]^{\rho^*}_{\cong} &  H_{T}(T^n)\ar[u]^{\rho^*_T}_{\cong} }
\end{CD}\end{equation}
where $j\co T^n \hookrightarrow G^n$ is the inclusion map and the vertical maps are isomorphisms. So we only need understand the image of the localization map $j^*$.

Consider first the case $n=1$. Recall that $H(BT) \cong \Q [c_1]$ is a polynomial algebra with a single generator in degree 2.

\begin{lem}\label{pattykingcole}
For $T = S^1$ acting on $SU(2)$ by conjugation, the image of the localization map $j^*: H_T(SU(2)) \rightarrow H_T(T) = H(T) \otimes H(BT)$ has image equal to $$\im(i^*) = \bigoplus_{p \leq q} H^p(T) \otimes H^{2q}(BT) = (H^0(T) \otimes H(BT)) \oplus (H^1(T) \otimes c_1  H(BT)).$$
\end{lem}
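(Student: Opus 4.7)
The plan is to reduce everything to $T$-equivariant formality of the conjugation action on $SU(2)$; once that is in hand, the description of $\im(j^*)$ follows by tracking module generators through $j^*$.

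First I would verify that $SU(2)$ is $T$-equivariantly formal. The $T$-fixed locus is precisely the maximal torus, $SU(2)^T = T$, and so $\dim H(SU(2)^T) = \dim H(T) = 2 = \dim H(S^3) = \dim H(SU(2))$; Proposition \ref{forcrit} then forces equivariant formality. Consequently $j^*$ is injective and $H_T(SU(2))$ is a free $H(BT)$-module mirroring $H(SU(2)) \cong \Lambda(x_3)$: one generator $1$ in degree $0$ and one generator $\tilde{x}_3$ in degree $3$. Because $H^1(SU(2)) = 0$, the lift $\tilde{x}_3 \in H_T^3(SU(2))$ is unambiguous, since there are no degree-$1$ classes that $c_1$ could correct it against.

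Next I would chase these generators through $j^*$. The constant class satisfies $j^*(1) = 1$, and by $H(BT)$-linearity this contributes the summand $H^0(T) \otimes H(BT)$ to $\im(j^*)$. The lift $\tilde{x}_3$ must land in $H_T^3(T)$; writing $H_T(T) \cong \Lambda(\alpha) \otimes \Q[c_1]$ with $|\alpha| = 1$, the degree-$3$ piece is one-dimensional, spanned by $\alpha c_1$. Hence $j^*(\tilde{x}_3) = k \cdot \alpha c_1$ for some $k \in \Q$, and injectivity forces $k \neq 0$. Sweeping by powers of $c_1$ yields the summand $H^1(T) \otimes c_1 H(BT)$. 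The sum of the two contributions has the same total dimension as $H_T(SU(2))$ in each degree, so by injectivity no further classes can appear in the image.

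The main obstacle is really just the equivariant formality step, which is settled by an immediate dimension comparison; after that the argument is a short exercise in degree-chasing, the only mild subtlety being the canonicity of $\tilde{x}_3$ coming from the vanishing of $H^1(SU(2))$.
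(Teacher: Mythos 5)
Your proposal is correct and rests on the same pillars as the paper's proof: $T$-equivariant formality of the conjugation action (which the paper assumes and you verify via Proposition \ref{forcrit}), injectivity of $j^*$, and a dimension count in $H_T(T)$. The only cosmetic difference is that you track the two $H(BT)$-module generators explicitly, whereas the paper notes that the cokernel of $j^*$ is one-dimensional in degree $1$ and identifies the image by elimination; both amount to the same bookkeeping.
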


\begin{proof}
Because the conjugation action of $T$ on $SU(2)$ is equivariantly formal, the Betti numbers imply that the cokernel of $j^*$ is 1-dimensional of degree 1. Since $H_T^1(T) = H^1(T) \otimes H^0(BT)$ is 1 dimensional, the image of $j^*$ must be as stated.
\end{proof}

\begin{lem}
Let $T = S^1$ act diagonally on $SU(2)^n$ via the conjugation action.
The image of $j^*: H_T(SU(2)^n) \rightarrow H(T^n)\otimes H(BT)$ is equal to $ \bigoplus_{k\leq l} H^k(T^n) \otimes H^{2l}(BT)$.
\end{lem}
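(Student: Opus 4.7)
The plan is to bootstrap from the $n=1$ case (Lemma \ref{pattykingcole}) using the equivariant Künneth formula of Lemma \ref{urbano}. Both $SU(2)^n$ and $T^n$ are equivariantly formal under the diagonal $T$-action (the former because each factor is, the latter trivially because $T$ acts trivially on $T^n$), so Lemma \ref{urbano} yields
\begin{equation*}
H_T(SU(2)^n) \cong H_T(SU(2))^{\otimes_{H(BT)} n}, \qquad H_T(T^n) \cong H_T(T)^{\otimes_{H(BT)} n}.
\end{equation*}
The inclusion $j\co T^n \hookrightarrow SU(2)^n$ is the $n$-fold product of the inclusion $T \hookrightarrow SU(2)$, so naturality of the Eilenberg--Moore argument underlying Lemma \ref{urbano} shows that $j^*$ coincides with the $n$-fold tensor product over $H(BT)$ of the single-factor localization map.

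Next I would apply Lemma \ref{pattykingcole} to describe the image $I_1 \subset H_T(T)$ of the $n=1$ localization map as a free $H(BT)$-module with two generators: $1$ in bidegree $(0,0)$ and $\tau \otimes c_1$ in bidegree $(1,2)$, where $\tau$ generates $H^1(T)$. Taking the $n$-fold tensor product over $H(BT)$, a basis for $\im(j^*) = I_1^{\otimes_{H(BT)} n}$ as an $H(BT)$-module is indexed by subsets $S \subseteq \{1,\dots,n\}$, with basis element $\tau_S \otimes c_1^{|S|}$ of $H(T^n)$-degree $|S|$ and $H(BT)$-degree $2|S|$. Summing over $S$ gives
\begin{equation*}
\im(j^*) = \bigoplus_{k=0}^{n} H^k(T^n) \otimes c_1^k H(BT) = \bigoplus_{k \leq l} H^k(T^n) \otimes H^{2l}(BT),
\end{equation*}
which is the desired description.

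The main obstacle is the compatibility statement in the first paragraph: one must check that the localization map for the diagonal action on the product genuinely factors as the tensor product of the localization maps. This is plausible from naturality of the Künneth isomorphism of Lemma \ref{urbano}, but one has to invoke the fact that the Eilenberg--Moore spectral sequence collapses for equivariantly formal spaces and is natural with respect to equivariant maps, so that pullback along $j \times \cdots \times j$ is identified with the tensor product of pullbacks. Once this compatibility is in hand, the argument reduces to the purely algebraic computation of the $H(BT)$-module tensor power of $I_1$ sketched above.
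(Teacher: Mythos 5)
Your proposal is correct and is essentially the paper's own argument: the paper proves this lemma in one line by combining Lemma \ref{pattykingcole} with the equivariant K\"unneth theorem of Lemma \ref{urbano}, exactly as you do. Your additional remarks on naturality of the K\"unneth isomorphism and the explicit $H(BT)$-module basis $\tau_S \otimes c_1^{|S|}$ merely flesh out the same route.
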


\begin{proof}
This follows from Lemma \ref{pattykingcole} by the equivariant Kunneth theorem, Lemma \ref{urbano}.
\end{proof}

\begin{prop}\label{razersnik3}
The image of $i^*_+ \cong j^*$ in $H_T(X^T)_+ \cong H(T^n)\otimes H(BT)$ is equal to $ \bigoplus_{k\leq l} H^k(T^n) \otimes H^{2l}(BT)$.
\end{prop}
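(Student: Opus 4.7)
The plan is to deduce Proposition \ref{razersnik3} directly from the preceding lemma by transporting the computation through the commutative diagram (\ref{werter}). The hard work has already been done: the image of $j^*: H_T(SU(2)^n) \to H(T^n) \otimes H(BT)$ has been identified with $\bigoplus_{k \leq l} H^k(T^n) \otimes H^{2l}(BT)$. So the only remaining task is to check that the vertical isomorphisms $\rho^*$ and $\rho^*_T$ in (\ref{werter}) preserve the bigrading under the identification $H_T(X^T)_+ \cong H(T^n) \otimes H(BT)$.

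First I would recall from Lemma \ref{deckd} that $\rho_T: X^T \to T^n$ is a trivial $\Z_2$-cover, and the $+1$-eigenspace $H(X^T)_+$ is identified with $H(T^n)$ precisely via $\rho_T^*$. Since $T$ acts trivially on $X^T$, we have $H_T(X^T) \cong H(X^T) \otimes H(BT)$ as bigraded algebras, and extracting the $+1$ eigenspace gives $H_T(X^T)_+ \cong H(T^n) \otimes H(BT)$ with the bigrading respected. Thus $\rho^*_T$ is an isomorphism of $(\Z \oplus \Z)$-graded algebras.

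Next, the commutative square (\ref{werter}) then shows that the image of $i^*_+$ in $H_T(X^T)_+ \cong H(T^n) \otimes H(BT)$ coincides (under this bigraded identification) with the image of $j^*$ in $H_T(T^n) = H(T^n) \otimes H(BT)$. Applying the preceding lemma gives precisely $\bigoplus_{k \leq l} H^k(T^n) \otimes H^{2l}(BT)$, as claimed.

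The main obstacle, such as it is, is merely the bookkeeping of verifying that both vertical isomorphisms in (\ref{werter}) are maps of bigraded $H(BT)$-algebras; but this is automatic since $\rho$ is $T$-equivariant (hence $\rho^*$ is $H(BT)$-linear and graded) and $X^T$ carries the trivial $T$-action (so the Borel construction splits canonically). In short, Proposition \ref{razersnik3} is essentially a corollary of the preceding lemma together with the identifications already set up in (\ref{werter}).
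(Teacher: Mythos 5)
Your proposal is correct and follows the same route as the paper: the paper itself establishes the diagram (\ref{werter}), computes $\im(j^*)$ for $SU(2)^n$ via Lemma \ref{pattykingcole} and the equivariant K\"unneth theorem (Lemma \ref{urbano}), and then states Proposition \ref{razersnik3} as the immediate consequence of transporting that computation through the vertical isomorphisms $\rho^*$ and $\rho_T^*$. Your additional remarks on the bigrading being preserved (trivial $T$-action on $X^T$, $T$-equivariance of $\rho$) are exactly the implicit bookkeeping the paper relies on.
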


\subsection{Skewinvariant part}\label{antiinvariantpart}

In this section we compute the image of $i^*_-$ for both the regular and singular representation varieties. We introduce subscripts $i_r$, $i_s$ to distinguish the cases. The following hold for values $n= 0,1,2,...$

\begin{prop}\label{razersnik}
The image of the map $$i^*_{r-} \co H_T(X_n^r)_- \rightarrow H_T(X^T)_- \cong H(T^n)\otimes H(BT)$$ is equal to $ \bigoplus_{k +l \geq n} H^k(T^n) \otimes H^{2l}(BT)$.
\end{prop}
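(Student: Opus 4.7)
The proof proceeds by induction on $n$, reducing via equivariant formality to a containment statement.

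By Corollary \ref{torefer}, $X_n^r$ is $T$-equivariantly formal, so $i^*$ is injective and splits as $i^*_+ \oplus i^*_-$ under the $\Z_2$-action. A direct Poincar\'e polynomial computation using Theorem \ref{onepoly} gives
\begin{equation*}
P_t(H_T(X_n^r)_-) = \frac{t^n(1+t)^n}{1-t^2},
\end{equation*}
while summing $\binom{n}{k}\, t^k \cdot t^{2(n-k)}/(1-t^2)$ over $k = 0,\dots,n$ (via $\sum_k \binom{n}{k} t^{n-k} = (1+t)^n$) produces the same Poincar\'e polynomial for $\bigoplus_{k+l \geq n} H^k(T^n) \otimes H^{2l}(BT)$. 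Consequently it suffices to establish the containment $\im(i^*_{r-}) \subseteq \bigoplus_{k+l \geq n} H^k(T^n) \otimes H^{2l}(BT)$.

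The base case $n=0$ is immediate, since $X_0^r \cong S^0$ and $i^*_{r-}$ is the identity on $H(BT)$. For the inductive step I would use the $T$-equivariant analogue of the short exact sequence underlying Lemma \ref{ineqqq}. The rug function $\phi \co X_n^r \to [-2,2]$ of Lemma \ref{iminyork} is $T$-invariant; $\phi^{-1}(2) \cong X_{n-1}^s$ is a closed $T$-subvariety of the $T$-manifold $X_n^r$ of codimension $3$, and $X_n^r \setminus \phi^{-1}(2)$ $T$-equivariantly deformation retracts onto $\phi^{-1}(-2) \cong X_{n-1}^s$. Combining $T$-equivariant Poincar\'e--Lefschetz duality for the smooth manifold $X_n^r$ with equivariant formality (as in Lemma \ref{ineqqq}) yields a short exact sequence relating $H_T(X_n^r)$ to two copies of $H_T(X_{n-1}^s)$, one of them shifted by a Thom-type class of the normal bundle. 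This SES is natural under restriction to $T$-fixed points and decomposes along the $\Z_2$-eigenspaces (up to the $\Z_2$-swap between the extrema $\phi^{-1}(\pm 2)$).

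The inductive hypothesis applied to $X_{n-1}^s$ --- derivable from that for $X_{n-1}^r$ via Proposition \ref{mikei} and Lemma \ref{urbano} together with equation (\ref{urbantwo}), or else proved in parallel --- constrains the bigrading of its localization image. Transporting this constraint through the SES and accounting for the Thom-type shift yields the inequality $k+l \geq n$ for $X_n^r$.

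The main obstacle is identifying precisely how the Thom class of the normal bundle $\nu \cong \lie{t} \oplus \lie{m}$ (with trivial $T$-action on $\lie{t}$ and weight-$\pm 2$ action on $\lie{m}$) contributes to the bigrading: the total cohomological shift is $3 = \rk(\nu)$, but its decomposition into horizontal and vertical shifts in $H(T^n) \otimes H(BT)$ requires a careful examination of the equivariant Poincar\'e dual of $\phi^{-1}(\pm 2)$ restricted to $T$-fixed points. Matching this shift against the inductive bound is the key technical point; once identified, the required inequality $k+l \geq n$ emerges as the cumulative bigrading constraint and completes the induction.
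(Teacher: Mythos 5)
Your reduction of the problem to a containment statement is sound and matches the paper: by equivariant formality the Poincar\'e series of $H_T(X_n^r)_-$ is $t^n(1+t)^n/(1-t^2)$, which agrees with that of $\bigoplus_{k+l\geq n}H^k(T^n)\otimes H^{2l}(BT)$, so equality follows once one inclusion is known; likewise your remark that the statement for $X_{n-1}^s$ follows from the one for $X_{n-1}^r$ via Proposition \ref{mikei} and (\ref{urbantwo}) is exactly the paper's auxiliary lemma. The gap is in the inductive step, which is precisely where the content of the proposition lies. You propose to obtain the containment from a $T$-equivariant version of the rug-function sequence (\ref{hohum}) together with a ``Thom-type class of the normal bundle $\nu\cong\lie{t}\oplus\lie{m}$'' of $\phi^{-1}(2)\cong X_{n-1}^s$ in $X_n^r$, but this object does not exist: $X_{n-1}^s$ is a \emph{singular} variety (it contains the stratum $(S^2)^n$ of all square roots of $-\id$), so it has no normal bundle in $X_n^r$, and the codimension is not even the claimed $3$ in general --- for $n=1$ the fibre $X_0^s\cong S^2$ sits inside $X_1^r\cong S^2\times S^1$ with codimension $1$. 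The paper's sequence (\ref{threerrrs}) is obtained not from a Thom isomorphism for the closed fibre but from Durfee's theorem plus Poincar\'e--Lefschetz duality for the \emph{open} complement $X_n^r-\phi^{-1}(2)\simeq\phi^{-1}(-2)$; making that equivariant and tracking the bigrading through the duality and boundary maps is exactly the step you defer as ``the key technical point,'' so as written the induction is not closed.

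For comparison, the paper's inductive step avoids duality altogether. For each $j\in\{1,\dots,n\}$ it uses the $T$- and $\Z_2$-equivariant embeddings $\tau_j\co X_{n-1}^s\to X_n^r$ which insert $\id$ in the $j$th slot; on fixed points these restrict to $\sigma_j\co T^{n-1}\to T^n$. Functoriality of localization gives $\im(i^*_{r-})\subseteq\bigcap_{j=1}^n(\sigma_j^*)^{-1}(\im(i^*_{s-}))$, and the induction hypothesis for $X_{n-1}^s$ (the ``$k+l\geq n$'' bound on $H(T^{n-1})\otimes H(BT)$) makes this intersection exactly $\bigoplus_{k+l\geq n}H^k(T^n)\otimes H^{2l}(BT)$; the Betti-number count you already supplied then forces equality. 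If you want to salvage your own route, you would have to replace the normal-bundle picture by an honest equivariant treatment of the pair $(X_n^r,\phi^{-1}(2))$ with the singular fibre, which is substantially harder than the restriction argument above.
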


\begin{prop}\label{razersnik2}
The image of the map $$i^*_{s-} \co H_T(X_n^s)_- \rightarrow H_T(X^T)_- \cong H(T^n)\otimes H(BT)$$ is equal to $ \bigoplus_{k +l \geq n+1} H^k(T^n) \otimes H^{2l}(BT)$.
\end{prop}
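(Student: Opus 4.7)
The plan is to reduce Proposition \ref{razersnik2} to Proposition \ref{razersnik} together with a direct computation in the case $n=0$, via the factorization of Proposition \ref{mikei}.

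First I would use Proposition \ref{mikei} to identify $H_T(X_n^s)$ with $H_T(X_n^r \times_{\Z_2} X_0^s)$ and $(X_n^s)^T$ with $(X_n^r)^T \times_{\Z_2} (X_0^s)^T$ (the latter via (\ref{montana})), both compatibly with localization. The skew-invariant part of $H_T(X_n^s)$ for the $Z(G)$-action corresponds under this identification to the $\Z_2$-eigenspace for the residual action on $X_n^r \times_{\Z_2} X_0^s$ that flips the sign in, say, the $X_0^s$ factor; by (\ref{urbantwo}) this eigenspace equals $H_T(X_n^r)_- \otimes_{H(BT)} H_T(X_0^s)_-$, and the analogous identification holds at the fixed point level. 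Thus $i^*_{s-}$ is identified with $i^*_{r-} \otimes i^*_{s-\text{ at } n=0}$, and the image is the $H(BT)$-submodule multiplicatively generated by the two factor images.

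Next I would treat the base case $n=0$ directly. Since $X_0^s \cong S^2$ with $\Z_2$ acting antipodally and swapping the two $T$-fixed points, equivariant formality (together with the fact that $S^2/\Z_2 \cong \R P^2$ is rationally contractible) forces $H_T(X_0^s)_+ \cong H(BT)$ and $H_T(X_0^s)_-$ to be concentrated in positive degrees with Poincar\'e series $t^2/(1-t^2)$. Under the fixed-point identification $H_T((X_0^s)^T)_- \cong H(BT)$ given by $(f,-f) \mapsto f$, dimension counting forces the localization map to be the inclusion $c_1 \cdot H(BT) \hookrightarrow H(BT)$, where $c_1$ generates $H^2(BT)$.

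Combining these steps with Proposition \ref{razersnik}, the image of $i^*_{s-}$ equals
$$c_1 \cdot \bigoplus_{k+l \geq n} H^k(T^n) \otimes H^{2l}(BT) \;=\; \bigoplus_{k + l' \geq n+1,\, l' \geq 1} H^k(T^n) \otimes H^{2l'}(BT),$$
and the constraint $l' \geq 1$ can be dropped since $H^k(T^n) = 0$ for $k > n$. The main subtlety to check will be that the K\"unneth-style decomposition of Lemma \ref{urbano} respects the two commuting $\Z_2$-actions compatibly with localization on both the total space and the fixed point set; this follows from the naturality already incorporated into Proposition \ref{mikei} and the functoriality of Lemma \ref{urbano} with respect to equivariant maps.
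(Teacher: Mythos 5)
Your proposal is correct and takes essentially the same route as the paper: the paper likewise reduces to the factorization of Proposition \ref{mikei} together with (\ref{urbantwo}), determines the $n=0$ skew image $\im(j_-^*) = c_1 H(BT)$ for $X_0^s$ from injectivity (equivariant formality) and the Betti numbers, and concludes $\im(i_{s-}^*) = c_1\,\im(i_{r-}^*)$, which matches the stated sum after exactly the index shift you describe.
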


Our proof will use induction on $n$.  Let  $( \ref{razersnik}, n)$ and $(\ref{razersnik2},n)$ be the statements of Propositions \ref{razersnik} and \ref{razersnik2}, where we have made explicit the dependance on $n$.
\begin{lem}
For $n$ a nonnegative integer, (\ref{razersnik},n) implies (\ref{razersnik2},n).
\end{lem}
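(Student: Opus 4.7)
The plan is to reduce $X_n^s$ to $X_n^r \times_{\Z_2} X_0^s$ using Proposition \ref{mikei}, and then extract the skew-invariant part of the localization map by splitting off the $X_0^s$ factor.

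First I would combine Proposition \ref{mikei} with Lemma \ref{urbano} (i.e. use equation (\ref{urbantwo})) to get
\begin{equation*}
H_T(X_n^s)_- \;\cong\; H_T(X_n^r)_- \otimes_{H(BT)} H_T(X_0^s)_-.
\end{equation*}
The analogous decomposition on the fixed point side comes from Lemma \ref{popeurban}: once we identify $(X_n^s)^T \cong (X_n^r)^T \times_{\Z_2} (X_0^s)^T$ and check that the ``multiply the $0$th factor by $-\id$'' action on $X_n^s$ matches the residual $\Z_2$ action on the quotient, an equivariant K\"unneth argument yields
\begin{equation*}
H_T((X_n^s)^T)_- \;\cong\; H_T((X_n^r)^T)_- \otimes_{H(BT)} H_T((X_0^s)^T)_- \;\cong\; H(T^n) \otimes H(BT),
\end{equation*}
so that $i^*_{s-}$ factors as $i^*_{r-} \otimes_{H(BT)} i^*_{0-}$, where $i^*_{0-}$ is the skew-invariant part of the localization for $X_0^s$.

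Next I would pin down the image of $i^*_{0-}$. Since $X_0^s \cong S^2$ carries the standard $T$-rotation action whose two fixed poles are swapped by the antipodal $\Z_2$, a direct GKM calculation shows that $H_T(S^2) \to H_T(\{N,S\}) = H(BT) \oplus H(BT)$ hits exactly the pairs $(f,g)$ with $f \equiv g \pmod{c_1}$. Under the identification of the anti-diagonal with $H(BT)$, the skew-invariant image is therefore $c_1 \cdot H(BT) = \bigoplus_{l \geq 1} H^{2l}(BT)$; equivalently, the Poincar\'e polynomial $P_t(H_T(X_0^s)_-) = t^2/(1-t^2)$ forces this.

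Combining the two steps and invoking (\ref{razersnik},$n$) for the image of $i^*_{r-}$, the image of $i^*_{s-}$ is the product of the two images inside $H(T^n)\otimes H(BT)$, namely
\begin{equation*}
\bigoplus_{\substack{k+l \geq n \\ m \geq 1}} H^k(T^n) \otimes H^{2(l+m)}(BT) \;=\; \bigoplus_{\substack{k+l' \geq n+1 \\ l' \geq 1}} H^k(T^n) \otimes H^{2l'}(BT).
\end{equation*}
Since $k \leq n$, the constraint $k+l' \geq n+1$ automatically forces $l' \geq 1$, and the index set coincides with $\{k+l' \geq n+1\}$, giving precisely (\ref{razersnik2},$n$). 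The only real obstacle is the $\Z_2$-bookkeeping in the first paragraph: one must verify that the ``multiplying the $0$th factor'' $\Z_2$-action on $X_n^s$ corresponds under Proposition \ref{mikei} to the residual $\Z_2$-action on $X_n^r \times_{\Z_2} X_0^s$ that governs the $\pm$-splitting in (\ref{urbantwo}). Once this matching is checked, the rest is essentially formal.
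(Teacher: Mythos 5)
Your proposal is correct and follows essentially the same route as the paper: both arguments factor $\im(i^*_{s-})$ as $\im(i^*_{r-})\otimes_{H(BT)}\im(j^*_-)$ via Proposition \ref{mikei} and (\ref{urbantwo}), identify the $X_0^s$ contribution as $c_1 H(BT)$, and then observe that multiplying by $c_1$ shifts the index condition from $k+l\geq n$ to $k+l\geq n+1$. The only cosmetic difference is that the paper obtains $\im(j^*_-)=c_1H(BT)$ directly from injectivity of the localization map plus the Betti numbers, rather than your GKM description of $H_T(S^2)$, but you note this equivalent argument yourself.
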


\begin{proof}
Let $j: (X_0^s)^T \rightarrow X_0^s$ be the fixed point inclusion, so that $$j_-^*: H_T(X_0^s) \rightarrow H_T(T^0) \cong H(BT).$$ 
Recall that $H(BT) = \Q [c_1]$ is a polynomial algebra in one degree 2 generator. Because $j_-^*$ is injective, it follows from the Betti numbers that $im( j_-^*) = c_1 H(BT) = H^{\geq 2}(BT)$.
By Proposition \ref{mikei} and (\ref{urbantwo}) we see that 

$$ im ( i_{s-}^*) = im( i_{r-}^*) \otimes  im(j^*_-) = c_1 im(i_{r-}^*)$$
completing the proof.
\end{proof}

Now observe that for each $j \in \{1,...,n\}$, we may define a $T$ invariant imbedding $$\tau_j: X_{n-1}^s \rightarrow X_n^r$$ defined by $\tau_j(g_0,...,g_{n-1}) = (g_0,... g_{j-1}, \id, g_j,..., g_{n-1})$. Each of the embeddings $\tau_j$ are equivariant for the $\Z_2$ action (Proposition \ref{somemanyprops}) so we obtain commutative diagrams:

\begin{equation}\begin{CD}
H_T(X_{n}^r)_- @>^{i^*_{r-}}>> H_T(X_n^{rT})_- @{=} H(T^n) \otimes H(BT)\\
@VV^{\tau_j^*}V			@VVV				@VV^{\sigma_j^*}V\\
H_T(X_{n-1}^s))_-  @>>^{i^*_{s-}}> H_T(X_{n-1}^{sT})_- @{=} H(T^{n-1}) \otimes H(BT)
\end{CD}\end{equation}
where we have identified the skewinvariant part of the cohomology of the fixed point set the cohomology of tori according to (\ref{montana}). Under this identification, the right most map is induced by the imbedding $\sigma_j: T^{n-1} \rightarrow T^n$, which takes $(t_1,...,t_{n-1})$ to $(t_1,...,t_{j-1},\id,t_j,...,t_{n-1})$.  

\begin{proof}

Thus the image of $i_{r-}^*$ satisfies: 
\begin{equation}\label{stuffstuff}
\im (i^*_{r-}) \subset \bigcap_{j=1}^n (\sigma_j^*)^{-1}( \im(i^* \circ \tau_j^*)) \subset   \bigcap_{j=1}^n (\sigma_j^{*})^{-1}( \im(i^*_{s-} ))
\end{equation}
and by induction hypothesis, we have $  \cap_{j=1}^n (\sigma_j^{*})^{-1}( \im(i^*_{s-} ))   \cong \oplus_{p+q \geq n} H^p(T^n) \oplus H^q(BT)$. Comparing Betti numbers, we deduce that the inclusions in  (\ref{stuffstuff}) must be equalities.
\end{proof}

\subsection{Proof of Lemma \ref{wat}}

We begin by rephrasing Lemma \ref{wat} in more explicit terms. Consider the correspondence diagram

\begin{equation}\begin{CD}
\xymatrix{ &(X_{n-1}^r\times_{\Z_2}X_1^r)^T \cong  X_n^{rT} \ar[dr]_{j_b} \ar[dl]^{j_a} \\
                   X_{n-1}^r \times_{\Z_2}X_1^r & &  X_n^r}
\end{CD}\end{equation}
where $j_a$ and $j_b$ are inclusions and the isomorphism follows from Lemma \ref{popeurban} and $(X_{n-1}\times_{\Z_2}X_1^r)^T \cong X_{n-1}^T\times_{\Z_2}X_1^{rT}$.  Our goal is to prove that the induced maps $j_a^*$ and $j_b^*$ in equivariant cohomology have the same image.

\begin{proof}

Let $i_k: X_{k}^{rT} \rightarrow X_{k}^r$ be inclusion. It follows from Lemma \ref{urbano} that   
\begin{equation}\begin{CD}
H_T(X_{n-1}^r \times_{\Z_2} X_1^r)_{\pm} @>^{\cong}>> H_T(X_{n-1}^r)_{\pm} \otimes_{H(BT)} H_T(X_1^r)_{\pm}\\
     @VV^{j_{a\pm}}V								@VV^{i_{(n-1)\pm} \otimes i_{1\pm}} V\\
 H_T(T^{n})   @>^{\cong}>>           H_T(T^{n-1}) \otimes_{H(BT)}H_T(T) 
\end{CD}\end{equation}

In particular, inputting the results of \S \ref{invtpart} and \S \ref{antiinvariantpart} 
\begin{align}
im(j_{a+}^*) = \oplus_{p\leq q, k\leq l} H^p(T^{n-1}) \otimes H^k(T) \otimes H^{q+l}(BT)\\
					= \oplus_{ p \leq q} H^p(T^n) \otimes H^q(BT)
\end{align}
while
\begin{align}
im(j_{a-}^*) = \oplus_{p+ q \geq n-1, k+ l\geq 1} H^p(T^{n-1}) \otimes H^k(T) \otimes H^{q+l}(BT)\\
					= \oplus_{ p +q \geq n} H^p(T^n) \otimes H^q(BT)
\end{align}
which completes the proof.
\end{proof}

\subsection{Bigrading and cup product}

The inclusion of the identity $\id \hookrightarrow T$ induces a natural map
$$ \psi: H_T(X) \rightarrow H(X)$$
from equivariant to ordinary cohomology. 

\begin{lem}\label{LHirsh}
Let $X$ denote the representation variety $X_n^r$ or $X_n^s$. The sequence:

\begin{equation*}\begin{CD}
0 \rightarrow c_1 H_T(X) \rightarrow H_T(X) @>\psi >> H(X) \rightarrow 0
\end{CD}\end{equation*}
is short exact.
\end{lem}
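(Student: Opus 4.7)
The plan is to deduce this short exact sequence directly from the equivariant formality of $X$ established in Corollary \ref{torefer}, combined with the fact that $H(BT) \cong \Q[c_1]$ is a polynomial ring in a single generator of degree $2$.

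First I would recall that the map $\psi$ is precisely the edge homomorphism of the Serre spectral sequence of the Borel fibration $X \hookrightarrow X_T \to BT$, since the inclusion of the fibre $\id \hookrightarrow T$ corresponds to the inclusion of $X$ as the fibre of $X_T \to BT$. Equivariant formality of $X$ is equivalent to the collapse of this spectral sequence at $E_2$, and therefore yields two consequences at once: the edge homomorphism $\psi$ is surjective, and there is an isomorphism of $H(BT)$-modules
\begin{equation*}
H_T(X) \cong H(X) \otimes H(BT)
\end{equation*}
under which $\psi$ corresponds to the projection $H(X) \otimes H(BT) \to H(X) \otimes \Q \cong H(X)$ killing the augmentation ideal of $H(BT)$.

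Since $H(BT) = \Q[c_1]$, the augmentation ideal of $H(BT)$ is simply $c_1 H(BT)$. Under the module isomorphism above, its image in $H_T(X)$ is $H(X) \otimes c_1 H(BT) = c_1 \cdot \bigl( H(X) \otimes H(BT) \bigr) = c_1 H_T(X)$. Consequently $\ker \psi = c_1 H_T(X)$, which combined with the surjectivity of $\psi$ yields the claimed short exact sequence.

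I do not expect any real obstacle here: once equivariant formality is in hand the statement reduces to the observation that $H(BT)$ is a polynomial algebra on a single degree $2$ generator, so the augmentation ideal is principal, generated by $c_1$. The only item requiring a moment's care is to confirm that the $H(BT)$-module splitting supplied by formality does identify $\psi$ with the augmentation, but this is the standard description of the edge map and is recorded for instance in \cite{bor}.
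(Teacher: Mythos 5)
Your argument is correct and is essentially the paper's proof: the paper simply cites equivariant formality (Theorem \ref{formalityresult}), and you spell out the standard deduction — collapse of the Borel fibration's spectral sequence gives surjectivity of $\psi$ and identifies $\ker\psi$ with the ideal generated by the augmentation ideal of $H(BT)=\Q[c_1]$, which is $c_1 H_T(X)$. No gaps; the only care needed, identifying $\psi$ with the augmentation under a Leray--Hirsch splitting, is handled as you indicate.
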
 

\begin{proof}
Follows from Theorem \ref{formalityresult}. 
\end{proof}

Lemma \ref{LHirsh} allows us to describe the cup product structure of $H^*(X)$. The description is more transparent if we first introduce a second grading.

Recall that the equivariant cohomology ring of the fixed point set is endowed with $\Z^2$ grading via the Kunneth theorem: $$H_T^*(X^T) \cong H^*(X^T) \otimes H^*(BT)$$
with two variable equivariant Poincar\'e series $$ P_{x,y}^T(X^T) = P_x(X^T)P_y(BT)= 2(1+x)^n/(1-y^2) .$$
By Propositions \ref{razersnik3}, \ref{razersnik} and \ref{razersnik2} we see that the localization map respects this bigrading so,

\begin{lem}
For $X = X_n^r$ or $X_n^s$, the localization map  $i^*: H_T(X) \rightarrow H_T(X^T)$ is a morphism of $\Z_2 \oplus \Z^2$ graded rings.
\end{lem}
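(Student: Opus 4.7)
The plan is to pull back the bigrading from the target along the injection $i^*$. Since $i^*$ is always a ring homomorphism and, by Theorem \ref{formalityresult} together with Proposition \ref{forcrit}, is injective, it suffices to verify that its image is a $\Z_2 \oplus \Z^2$-graded subring of $H_T(X^T)$; then the bigrading on $H_T(X)$ is transferred by declaring the $(\epsilon, p, q)$-component to be the $i^*$-preimage of the corresponding component in the image, and $i^*$ automatically becomes a morphism of bigraded rings.

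The $\Z_2$-part of the grading is immediate: the involution of Lemma \ref{twotoris} extends over all of $X$ and commutes with the $T$-action, so the localization map respects the $\pm$ eigenspace decomposition. This is the decomposition $i^* = i^*_+ \oplus i^*_-$ already exploited at the start of \S\ref{chapter6}. What remains is to verify that each of $\im(i^*_+)$ and $\im(i^*_-)$ is a $\Z^2$-graded subspace of $H(T^n) \otimes H(BT)$, meaning closed under projection onto each summand $H^p(T^n) \otimes H^{2q}(BT)$.

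This is exactly the content of the image computations in the preceding two subsections. Proposition \ref{razersnik3} identifies $\im(i^*_+) = \bigoplus_{p\leq q} H^p(T^n) \otimes H^{2q}(BT)$; Propositions \ref{razersnik} and \ref{razersnik2} identify $\im(i^*_-)$ as $\bigoplus_{p+q \geq n} H^p(T^n) \otimes H^{2q}(BT)$ for $X_n^r$, and as $\bigoplus_{p+q \geq n+1} H^p(T^n) \otimes H^{2q}(BT)$ for $X_n^s$. In each case the image is manifestly a direct sum of $\Z^2$-graded pieces, hence a $\Z^2$-graded subspace; combined with the $\Z_2$-refinement this gives a $\Z_2 \oplus \Z^2$-graded subspace. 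Since $i^*$ is a ring homomorphism, this bigraded subspace is closed under multiplication, so it is a bigraded subring, as required.

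There is no substantive obstacle at this stage. The genuine work, namely producing an explicit description of the image as a sum of ``coordinate subspaces'' of the Kunneth decomposition, was carried out in \S\ref{invtpart} and \S\ref{antiinvariantpart}; once that is in hand, bigradedness of $i^*$ is a formal consequence of injectivity, which in turn follows from equivariant formality.
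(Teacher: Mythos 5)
Your proposal is correct and follows essentially the same route as the paper: the paper likewise deduces the lemma directly from the image descriptions in Propositions \ref{razersnik3}, \ref{razersnik} and \ref{razersnik2}, with the bigrading on $H_T(X)$ obtained by transferring the $\Z_2\oplus\Z^2$ grading of $H_T(X^T)$ through the injective (by equivariant formality) ring map $i^*$. Your write-up just makes explicit the formal transfer step that the paper leaves implicit.
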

The kernel of $\psi$ in Lemma \ref{LHirsh} is also $\Z_2 \oplus \Z^2$-graded, so the ordinary cohomology, $H(X)$, also inherits this grading. The following should be compared with Theorem \ref{onepoly}.
\begin{prop}
As a $\Z^2$-graded ring, $H(X)$ has two variable Poincar\'e polynomial

\begin{equation}
P_{x,y}(X) = \begin{cases} 
		(1 + xy^2)^n + (x+y^2)^n & \text{ if $X = X_n^r$}\\
		(1 + xy^2)^n + y^2(x+y^2)^n & \text{ if $X = X_n^s$}\\
		\end{cases}
\end{equation}
where the even and odd components of the $\Z_2$-grading correspond to the first and second terms respectively.
\end{prop}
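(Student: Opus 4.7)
The plan is to extract the two-variable Poincaré polynomial from the short exact sequence of Lemma \ref{LHirsh} together with the explicit descriptions of the image of the localization map obtained in Propositions \ref{razersnik3}, \ref{razersnik}, and \ref{razersnik2}. Equivariant formality (Corollary \ref{torefer}) ensures that $i^*$ is injective and, since it is a morphism of $\Z_2 \oplus \Z^2$-graded rings, we may identify $H_T(X)$ with its image inside $H_T(X^T) \cong \Q\Z_2 \otimes H(T^n)\otimes H(BT)$, whose bigraded structure is transparent.

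First, because $H_T(X)$ is a free $H(BT)$-module (again by equivariant formality), multiplication by $c_1$ is injective, so Lemma \ref{LHirsh} yields the bigraded identity $P_{x,y}(X) = (1-y^2)\, P^T_{x,y}(X)$, where $P^T_{x,y}$ denotes the $\Z^2$-graded equivariant Poincaré series. The short exact sequence is $\Z_2$-equivariant, so this relation holds separately on the $\pm$ eigenspaces, which become the even and odd components of the $\Z_2$-grading on $H(X)$.

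Next, I would compute $P^T_{x,y}(X)_\pm$ by summing over the subsets described in the localization propositions. For the invariant part, Proposition \ref{razersnik3} gives
$$P^T_{x,y}(X)_+ = \sum_{k \leq l}\binom{n}{k}x^k y^{2l} = \sum_k \binom{n}{k} x^k \cdot \frac{y^{2k}}{1-y^2} = \frac{(1+xy^2)^n}{1-y^2}.$$
For the skew-invariant parts, Propositions \ref{razersnik} and \ref{razersnik2} analogously give
$$P^T_{x,y}(X_n^r)_- = \sum_{k+l\geq n}\binom{n}{k} x^k y^{2l} = \frac{(x+y^2)^n}{1-y^2}, \qquad P^T_{x,y}(X_n^s)_- = \frac{y^2(x+y^2)^n}{1-y^2},$$
each after a reindexing and summation of a geometric series in $y^2$.

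Finally, multiplying by $(1-y^2)$ and adding the invariant and skew-invariant contributions gives precisely the two cases in the statement, with the even and odd $\Z_2$-components corresponding to the first and second terms respectively. There is no real obstacle here: the localization computations of \S\ref{invtpart} and \S\ref{antiinvariantpart} have already pinned down the graded pieces, and what remains is pure bookkeeping.
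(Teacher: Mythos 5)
Your proposal is correct and follows essentially the same route as the paper: Lemma \ref{LHirsh} together with injectivity of the localization map identifies $H(X)$ with $\im(i^*)/c_1\,\im(i^*)$, and the bigraded count then comes directly from the explicit images in Propositions \ref{razersnik3}, \ref{razersnik} and \ref{razersnik2}. Your geometric-series bookkeeping is just the "simple calculation" the paper leaves implicit, and your factor $(1-y^2)$ argument via freeness over $H(BT)$ is the same mechanism.
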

\begin{proof}

By Lemma \ref{LHirsh}, $H(X) \cong H_T(X) / c_1 H_T(X)$ as a bigraded ring. Using the injectivity of the localization map $i^*$, we write $$ H(X) \cong \im(i^*) / c_1 \im(i^*).$$
The result then follows by a simple calculation using the explicit description of $\im( i^*)$ from Propositions \ref{razersnik3}, \ref{razersnik} and \ref{razersnik2} .
\end{proof}

Recall from Proposition \ref{somemanyprops}, that the subring $H(X)_+$ is isomorphic to $H(G^n)$.

\begin{prop}
Let $X= X_n^r$ or $X_n^s$. Then in terms of the vector space decomposion into $\Z_2$-weight spaces $$\tilde{H}(X) = \tilde{H}(X)_+ \oplus H(X)_-$$
the cup product satisfies $$( a_1, b_1) \cup (a_2, b_2) = ( a_1 \cup a_2 + c(b_1,b_2), 0)$$
where the bilinear map $$c: H(X)_- \times H(X)_- \rightarrow H^{3n}(X)_+ \subset \tilde{H}(X)_+$$ is the Poincar\'e duality pairing when $X = X_n^r$ and is zero when $X = X_n^s$. 
\end{prop}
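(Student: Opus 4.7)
The plan is to exploit the $\Z_2 \oplus \Z^2$-grading on $H(X)$ (inherited from Corollary \ref{bigradient} via Lemma \ref{LHirsh}) together with the two-variable Poincaré polynomial of the preceding proposition. The bigrading is so restrictive that it alone forces the second-slot-zero assertion and pins down the target of the surviving pairing; Poincaré duality on the smooth manifold $X_n^r$ then identifies that pairing with $c$.

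The piece $(a_1, 0) \cup (a_2, 0) = (a_1 \cup a_2, 0)$ is immediate from Proposition \ref{somemanyprops}, which gives $H(X)_+ \cong H(G^n)$ as rings. For the cross piece, a class in $\tilde{H}(X)_+$ has bidegree $(k_1, 2k_1)$ with $k_1 \geq 1$ (read off from $(1+xy^2)^n$), while a class in $H(X)_-$ has bidegree $(k_2, 2(n-k_2))$ when $X = X_n^r$ and $(k_2, 2(n+1-k_2))$ when $X = X_n^s$. Since $\Z_2$ acts multiplicatively, the product must sit in $H(X)_-$ and therefore have a bidegree of the corresponding $-$ shape; but its actual bidegree $(k_1+k_2, 2(k_1+n-k_2))$ in the regular case, or $(k_1+k_2, 2(k_1+n+1-k_2))$ in the singular case, matches only when $k_1 = 0$, contradicting $k_1 \geq 1$. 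So the cross products vanish, which is exactly the zero in the second slot of the stated formula.

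The piece $H(X)_- \cdot H(X)_-$ lies in $H(X)_+$, hence in a bidegree of the form $(k, 2k)$. For $X_n^r$ the product bidegree $(k_1+k_2, 2(2n-k_1-k_2))$ is of that form only when $k_1+k_2 = n$, so the pairing lands entirely in bidegree $(n, 2n)$, which is the one-dimensional top cohomology $H^{3n}(X_n^r)_+$. For $X_n^s$ the analogous calculation forces $k_1+k_2 = n+1$, placing the product in total degree $3n+3$; since $H(X_n^s)$ vanishes above degree $3n$ by the preceding Poincaré polynomial, the pairing is identically zero and $c = 0$, as required.

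It remains to identify the surviving $c$ on $X_n^r$ with the Poincaré pairing. Here $X_n^r$ is a compact smooth $3n$-manifold by Proposition \ref{singu}, and the Poincaré polynomial already shows that the top group $H^{3n}(X_n^r)$ lies entirely in the plus-part (the top degree of $H(X_n^r)_-$ is only $2n$), so the fundamental class sits in $H^{3n}(X_n^r)_+$. Poincaré duality then provides a perfect pairing $H^k(X_n^r) \otimes H^{3n-k}(X_n^r) \to H^{3n}(X_n^r)_+$, and its cross component $H^k_+ \otimes H^{3n-k}_- \to H^{3n}_-$ is forced to be zero because $H^{3n}_- = 0$. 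Non-degeneracy of the full pairing then forces its restriction to $H^k(X_n^r)_- \otimes H^{3n-k}(X_n^r)_-$ to be non-degenerate on its own; this restriction is cup product followed by projection onto the top piece, which is precisely $c$. The entire argument is a bookkeeping exercise in the bigrading, and no step presents a serious obstacle; the only thing worth double-checking is that the fundamental class lies in the plus-part, which we have just seen is automatic from the two-variable Poincaré polynomial.
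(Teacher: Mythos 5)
Your argument is correct and is essentially the paper's own proof: the paper likewise reads the bidegrees off the two-variable Poincar\'e polynomial (plus-classes in bidegree $(k,2k)$, minus-classes in bidegree $(l,2n-2l)$), concludes that cross products must vanish and that minus-times-minus products land in the top bidegree $(n,2n)$ of the $3n$-manifold $X_n^r$, and dismisses the singular case as ``similar,'' so your explicit treatment of $X_n^s$ and the nondegeneracy discussion via Poincar\'e duality are just slightly more detailed versions of the same bookkeeping. One small touch-up: for $n\le 1$ it is not true that $H(X_n^s)$ vanishes above degree $3n$ (e.g.\ $H^4(X_1^s)\neq 0$), but all you need is vanishing in degree $3n+3$, which does hold --- equivalently, note that the forced bidegree $(n+1,2n+2)$ simply does not occur in the plus part.
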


\begin{proof}
We address the case $X_n^r$, the other case being similar.  Notice that all elements of $H(X_n^r)_+$ have bidegree $(k, 2k)$ for some $k$, while elements in $H(X_n^r)_-$ have bidegree $( l, 2n -2l)$ for some $l$. 

Pairing an elements of degree (k, 2k) and (l, 2n-2l) ends up in (k+l, 2n -2l +2k) which must be zero unless $k=0$. Pairing elements of degree (k, 2n-2k) and (l, 2n-2l) ends up in (k+l, 4n -2(k+l)), which must be zero unless $4n - 2(k+l) = 2(k+l)$, and so must land in $(n, 2n)$ which is the top degree for the 3n dimensional manifold $X_n^r$.
\end{proof}

\section{The orbit space $X_n(\mathcal{\mathcal{C}})/SU(2)$}\label{chapter7}
 
In this section we compute the cohomology of the orbit space $ X_n(\mathcal{C})/SU(2)$. To accomplish this, we use a technique borrowed from Cappell-Lee-Miller \cite{clm}. We first describe the technique for a general equivariantly formal $SU(2)$-space $X$, and then apply it to our case $X_n(\mathcal{C})/SU(2)$. As usual we set $G= SU(2)$ and $T \subset SU(2)$ a maximal torus.

\subsection{General case}\label{gneyork}

Since $G = SU(2)$ has rank 1, every Lie subgroup of $G$ is either finite or contains a maximal torus of $G$. All maximal tori are conjugate in $G$ so we obtain:

\begin{lem}\label{bylemy}
Let $X$ be a $G=SU(2)$-space, let $G \cdot X^T$ be the union of orbits passing through the $T$-fixed point set $X^T$. Then the stabilizers of points in $G \cdot X^T$ have rank $1$ while the stabilizers of points in the complement $ X - G\cdot X^T$ are finite. 
\end{lem}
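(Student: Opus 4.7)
The plan is to exploit the very restrictive structure of closed subgroups of $SU(2)$. Since $SU(2)$ has rank one, any closed subgroup is either finite (rank $0$) or of rank $1$; in the latter case it contains a maximal torus (indeed any closed connected positive-dimensional subgroup of $SU(2)$ is a maximal torus, and arbitrary closed subgroups of rank $1$ contain their identity component, which is a maximal torus). This dichotomy is the only nontrivial input needed, and the rest is a conjugation argument exploiting the fact that all maximal tori of $G$ are conjugate.

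First I would verify the easy inclusion. For any $x \in X^T$, the stabilizer $G_x$ contains $T$, so $G_x$ has rank $1$. For an arbitrary point $y = g \cdot x$ in the orbit $G \cdot X^T$, the stabilizer is $G_y = g G_x g^{-1}$, which contains $gTg^{-1}$, a maximal torus; hence $G_y$ has rank $1$ as well.

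For the converse, suppose $y \in X$ has stabilizer $G_y$ of rank $\geq 1$. Then $G_y$ contains some maximal torus $T' \subset G$. Since all maximal tori of $SU(2)$ are $G$-conjugate, pick $g \in G$ with $T' = gTg^{-1}$. Then $T$ stabilizes $g^{-1} y$, i.e.\ $g^{-1} y \in X^T$, so $y \in g \cdot X^T \subset G \cdot X^T$. Contrapositively, every point in the complement $X - G \cdot X^T$ has rank $0$ stabilizer, which by the structural remark above means $G_y$ is finite.

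I do not anticipate any genuine obstacle: the statement is essentially a tautology once the rank dichotomy for closed subgroups of $SU(2)$ and the conjugacy of maximal tori are in hand. The only point that warrants a sentence of care is justifying that any rank-one closed subgroup of $SU(2)$ contains a maximal torus; this follows because its identity component is a closed connected subgroup of positive dimension in the three-dimensional compact group $SU(2)$, and inspection of the possibilities (using, e.g., that proper closed connected subgroups of $SU(2)$ are tori) forces it to be a maximal torus.
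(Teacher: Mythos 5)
Your argument is correct and is essentially the same as the paper's, which simply records the observation that every closed subgroup of $SU(2)$ is either finite or contains a maximal torus, together with the conjugacy of maximal tori, and deduces the lemma directly. Your write-up just makes the conjugation step ($G_{g\cdot x}=gG_xg^{-1}$ and $T'\subset G_y \Rightarrow g^{-1}y\in X^T$) explicit, which is exactly the intended reasoning.
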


We are interested in the long exact sequence in equivariant cohomology for the pair $(X, G\cdot X^T)$. We call a $G$-space $X$ \textbf{nice} if is a finite CW complex.\footnote{More generally, we could work with compact Hausdorff spaces and Cech cohomology.}
 
\begin{prop}\label{yorkysit}
Let $X$ be a nice $G$-space. Then we have an isomorphism of long exact sequences:

\begin{equation*}\begin{CD}\label{toyourk}
 \xymatrix{ \ar[r]  & H_T(X,X^T)^W   \ar[r]  & H_T(X)^W   \ar[r]   & H_T(X^T)^W   \ar[r]  &   \\
   \ar[r]  & H_G(X,G \cdot X^T)    \ar[r] \ar[u]^{\cong} &      H_G(X)      \ar[r] \ar[u]^{\cong} &    H_G(G \cdot X^T)    \ar[r] \ar[u]^{\cong} &  }
\end{CD}\end{equation*}
\end{prop}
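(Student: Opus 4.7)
The plan is to reduce the proposition to three vertical isomorphisms, one of which (the rightmost) requires a geometric input while the other two are formal.

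For any $G$-space $Y$, the fibration $EG \times_T Y \to EG \times_G Y$ has fiber $G/T$, and since $|W|=2$ is invertible in $\Q$ one obtains the standard natural isomorphism $H_G(Y;\Q) \cong H_T(Y;\Q)^W$. This applies equally to the relative cohomology $H_G(X,Z;\Q)$ for any $G$-invariant subspace $Z$. Applying this to $Y=X$ and to the pair $(X, G\cdot X^T)$ yields the middle vertical iso and gives $H_G(X, G\cdot X^T;\Q) \cong H_T(X, G\cdot X^T;\Q)^W$. These identifications are natural in restrictions and connecting maps, so the squares of the diagram will commute automatically once the rightmost vertical is in place.

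For the right-hand vertical, introduce the $G$-equivariant surjection
$$
\bar\phi \co G \times_N X^T \longrightarrow G \cdot X^T, \qquad [g,x] \longmapsto gx,
$$
where $N = N_G(T)$ acts on $X^T$ through its quotient $W=N/T$. Because $X$ is a finite CW complex, both sides are compact and $\bar\phi$ is proper. For $y\in G\cdot X^T$ with isotropy $G_y$, which by Lemma \ref{bylemy} is one of $T$, $N$, or $G$, a direct case check identifies the fiber:
$$
\bar\phi^{-1}(y) \;\cong\; \begin{cases} \mathrm{pt}, & G_y \in \{T, N\}, \\[2pt] G/N \cong \R P^2, & G_y = G. \end{cases}
$$
Both possibilities are rationally acyclic, so Vietoris--Begle applied to the map of Borel constructions $EG\times_G \bar\phi$ gives
$$
H_G(G \cdot X^T;\Q) \;\cong\; H_G(G \times_N X^T;\Q) \;\cong\; H_N(X^T;\Q) \;\cong\; H_T(X^T;\Q)^W,
$$
where the middle step uses the standard identification $EG\times_G (G\times_N X^T) \simeq EG\times_N X^T$ (a universal $N$-bundle for $X^T$), and the last step again invokes $|W|$ invertible.

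Finally, with the middle and rightmost vertical isomorphisms established and the rightmost square commuting (both verticals are induced functorially from the inclusion $X^T \subset X$ together with the transfer, so naturality is automatic), the five lemma upgrades the display to the leftmost isomorphism $H_G(X, G\cdot X^T;\Q) \cong H_T(X, X^T;\Q)^W$ and hence to an isomorphism of long exact sequences. The main obstacle is the fiber analysis of $\bar\phi$: the only nontrivial fiber occurs when $G_y = G$, where it is $G/N \cong \R P^2$, and the argument hinges on the rational acyclicity of $\R P^2$ — a feature special to $G = SU(2)$, which is precisely why the proposition requires rational coefficients.
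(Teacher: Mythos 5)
Your proposal is correct and has the same skeleton as the paper's proof — both obtain the middle and relative vertical arrows from the standard isomorphism $H_G(Y;\Q)\cong H_T(Y;\Q)^W$ (valid for pairs as well) and then close the diagram with the five lemma — but you treat the essential right-hand isomorphism differently. The paper reduces it to the claim that the restriction $H_T(G\cdot X^T)^W\to H_T(X^T)^W$ is an isomorphism and cites \S 4 of \cite{b1} for that claim; you instead prove $H_G(G\cdot X^T)\cong H_N(X^T)\cong H_T(X^T)^W$ directly by resolving $G\cdot X^T$ through $G\times_N X^T$, where $N=N_G(T)$, and using Lemma \ref{bylemy} to check that every fibre is a point or $G/N\cong \R P^2$, hence rationally acyclic, so a Vietoris--Begle argument on Borel constructions applies. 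What your route buys is a self-contained proof of exactly the input the paper outsources, and in a style consistent with the paper's own cohomological-orbit-space technique. Two minor caveats: the Vietoris--Begle step should be formulated for sheaf (or \v{C}ech) cohomology and closed surjections, or run on finite-dimensional approximations of $EG$ — the same care the paper takes in Proposition \ref{strongweak}; since all spaces here are finite complexes and their Borel constructions this is routine. Also, the rational acyclicity of the exceptional fibre is not really special to $SU(2)$: for any compact connected group one has $H^*(G/N(T);\Q)\cong H^*(G/T;\Q)^W\cong \Q$, so your resolution argument generalizes; what is genuinely rank-one is the trichotomy of stabilizers $T$, $N$, $G$ from Lemma \ref{bylemy} that makes the fibre check so short.
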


\begin{proof}
The morphism of long exact sequences (\ref{toyourk}) factors through the well known isomorphism

\begin{equation*}\begin{CD}\label{howyorkto}
\xymatrix{ \ar[r] & H_T(X,G \cdot X^T)^W  \ar[r] &   H_T(X)^W   \ar[r] &   H_T(G \cdot X^T)^W  \ar[r] &    \\
 \ar[r] &H_G(X,G \cdot X^T)   \ar[r]  \ar[u]^{\cong} &   H_G(X)     \ar[r]  \ar[u]^{\cong} &     H_G(G \cdot X^T) \ar[r]  \ar[u]^{\cong} &     }
\end{CD}\end{equation*}
so by the five lemma, we only need establish that the map $H_T(G \cdot X^T)^W  \rightarrow H_T(X^T)^W$ induced by inclusion of spaces is an isomorphism. This result was established in \S 4 of \cite{b1}.
\end{proof}

By Lemma \ref{bylemy}, $G$ acts with finite stabilizers on $X - G\cdot X^T$. Since we are working with rational coefficients we obtain (see \cite{bred}):

$$H_G( X, G \cdot X^T) \cong H(X/G, (G \cdot X^T)/G) = H(X/G, X^T/W).$$

Consequently, if we understand well the localization map $H_T(X) \rightarrow H_T(X^T)$, we can compute $H(X/G, X^T/W)$. For instance:

\begin{prop}\label{helpful}
Suppose a nice $G$-space $X$ is $T$-equivariantly formal for the restricted $T$ action. Then the Poincar\'e polynomial for the pair $(X/G, X^T/W)$ satisfies
\begin{equation}\label{poinpoly2}
P_t(X/G,X^T/W) = t[P_t^G(G \cdot X^T)-P_t^{G}(X)]
\end{equation}
and $H(X/G,X^T/W)$ has trivial cup product.
\end{prop}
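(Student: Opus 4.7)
The plan is to apply the long exact sequence of $G$-equivariant cohomology for the pair $(X, G \cdot X^T)$ and combine it with the identification $H_G^*(X, G\cdot X^T) \cong H^*(X/G, X^T/W)$ noted just before the statement, which is available because $G$ acts with finite stabilizers on $X - G\cdot X^T$ (Lemma \ref{bylemy}) and we work rationally.

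First I would show that the restriction $H_G(X) \to H_G(G \cdot X^T)$ is injective. By hypothesis $X$ is $T$-equivariantly formal, so the Borel localization map $H_T(X) \to H_T(X^T)$ is injective. Taking $W$-invariants preserves this injectivity, and the vertical isomorphisms of Proposition \ref{yorkysit} translate it into injectivity of $H_G(X) \to H_G(G \cdot X^T)$. Consequently the long exact sequence of the pair breaks into short exact sequences
$$
0 \to H_G^{k-1}(X) \to H_G^{k-1}(G \cdot X^T) \to H_G^{k}(X, G \cdot X^T) \to 0,
$$
and comparing Poincaré polynomials together with $P_t(X/G, X^T/W) = P_t^G(X, G\cdot X^T)$ yields the formula (\ref{poinpoly2}) immediately.

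For the trivial cup product claim, the decisive observation is that the natural map $j^*\co H_G(X, G\cdot X^T) \to H_G(X)$ is identically zero, since its image sits inside the kernel of the injection just established. The cup product on relative cohomology then vanishes by the standard identity $\alpha \cup \beta = j^*(\alpha)\cdot\beta$, where the right hand side refers to the $H_G(X)$-module structure on $H_G(X, G\cdot X^T)$. This identity is immediate at the cochain level: any relative cochain representative of $\alpha$ that vanishes on $G\cdot X^T$ also represents $j^*\alpha$ as an absolute cochain on $X$, and cup product of cochains is compatible with this reinterpretation. Thus $\alpha \cup \beta = 0$ for all $\alpha,\beta$.

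I do not anticipate a serious obstacle, since the argument is essentially bookkeeping once Proposition \ref{yorkysit} is in hand. The one point that requires mild care is confirming that the identification $H_G(X, G\cdot X^T) \cong H(X/G, X^T/W)$ respects the multiplicative, and not merely the additive, structure; this follows from the ring-level version of the finite-stabilizer quotient theorem for rational coefficients (the same Bredon-style input that was used just before the proposition to establish the additive isomorphism).
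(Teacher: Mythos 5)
Your argument is essentially the paper's own proof: both use $T$-equivariant formality together with Proposition \ref{yorkysit} to split the long exact sequence of the pair $(X, G\cdot X^T)$ into short exact sequences, which gives the Poincar\'e polynomial formula, and both identify $H_G(X,G\cdot X^T)$ with $H(X/G, X^T/W)$ via the finite-stabilizer (Bredon) result. Your cup-product step, phrased as $j^*=0$ together with the identity $\alpha\cup\beta = j^*(\alpha)\cdot\beta$, is exactly an unpacking of the paper's one-line observation that the boundary map surjects onto $H_G(X,G\cdot X^T)$, so the proposal is correct and takes the same route.
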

\begin{proof}
By equivariant formality we have a short exact sequence $$0 \rightarrow H_T^*(X) \rightarrow H_T^*(X^T) \rightarrow H_T^{*+1}(X, X^T) \rightarrow 0$$ so by Proposition \ref{yorkysit} the sequence
$$0 \rightarrow H_G^*(X) \rightarrow H_G^*(G \cdot X^T) \rightarrow H_G^{*+1}(X, G \cdot X^T) \rightarrow 0$$
is also exact. Equation (\ref{poinpoly2}) follows.  The cup product on $H_G(X, G \cdot X^T) \cong H(X/G,X^T/W)$ is trivial because the boundary map in (\ref{howyorkto})
 surjects onto $H_G(X, G \cdot X^T)$.
\end{proof}

Next we would like to study the cohomology of the orbit space $X/G$ using the long exact sequence of the pair $(X/G, X^T/W)$. The following lemma is helpful.

\begin{lem}\label{prtolam}
Suppose that $X$ is a nice $G$-space . Then there is a morphism of long exact sequences:

\begin{equation*}\begin{CD}
\xymatrix{  \ar[r]&  H_T(X,X^T)^W  \ar[r] &  H_T(X)^W  \ar[r] & H_T(X^T)^W  \ar[r] & \\
\ar[r] & H(X/G, X^T/W)  \ar[r]  \ar[u]& H(X/G)\ar[r] \ar[u] &H(X^T)^W \ar[r] \ar[u] &\\}
\end{CD}\end{equation*}
where the map $H(X^T)^W \rightarrow H_T(X^T)^W = (H(X^T)\otimes H(BT))^W$, is induced by the inclusion $$H(X^T) = H(X^T) \otimes 1 \hookrightarrow H(X^T) \otimes H(BT).$$ 
\end{lem}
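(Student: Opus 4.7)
The plan is to construct the morphism of long exact sequences by applying the long exact sequence functor to a single natural map of pairs, and then to translate the resulting upper row into the form stated in the lemma by appealing to Proposition \ref{yorkysit}.

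Concretely, I would begin with the canonical quotient map $\pi\co X_G = EG \times_G X \rightarrow X/G$, $[e, x] \mapsto [x]$. This is continuous and restricts to the analogous map $(G \cdot X^T)_G \rightarrow (G \cdot X^T)/G = X^T/W$, so it is a map of pairs $(X_G, (G \cdot X^T)_G) \rightarrow (X/G, X^T/W)$. Naturality of the relative long exact sequence then produces a morphism of long exact sequences connecting the bottom row of the stated diagram, $H(X/G, X^T/W) \rightarrow H(X/G) \rightarrow H(X^T/W)$, with the $G$-equivariant version $H_G(X, G \cdot X^T) \rightarrow H_G(X) \rightarrow H_G(G \cdot X^T)$. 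To put the upper row into the form stated in the lemma, I apply the vertical isomorphisms of Proposition \ref{yorkysit}, which identify this $G$-equivariant sequence with the $W$-invariants of the $T$-equivariant long exact sequence of the pair $(X, X^T)$, and I use Theorem \ref{cov} to identify $H(X^T/W)$ with $H(X^T)^W$.

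The only substantive step is then to verify that the rightmost vertical map, $H(X^T)^W \rightarrow H_T(X^T)^W$, is really the claimed inclusion $H(X^T) \otimes 1 \hookrightarrow H(X^T) \otimes H(BT)$ on $W$-invariants. Here I would use the identification $(G \cdot X^T)_G \simeq EN_G(T) \times_{N_G(T)} X^T$ (obtained by absorbing the $G$-factor of $G \cdot X^T \cong G \times_{N_G(T)} X^T$ into $EG$) together with the fact that $T$ acts trivially on $X^T$. This gives $(G \cdot X^T)_G \cong (BT \times X^T)/W$ with $W$ acting diagonally, and under this identification the restriction of $\pi$ becomes the projection $(BT \times X^T)/W \rightarrow X^T/W$ onto the $X^T$-factor. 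On cohomology the projection induces $\alpha \mapsto 1 \otimes \alpha$ at the level of $W$-invariants, which is the stated inclusion.

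The main obstacle I anticipate is the bookkeeping required to confirm that the identification of the upper long exact sequence from Proposition \ref{yorkysit} is compatible, up to sign and on the nose, with this concrete geometric description of $(G \cdot X^T)_G \rightarrow X^T/W$. The proof of Proposition \ref{yorkysit} invokes the isomorphism $H_T(G \cdot X^T)^W \cong H_T(X^T)^W$ established in \cite{b1}, and one must trace through that equivalence to be sure the rightmost vertical map of the lemma emerges as the naive inclusion rather than as a twist by a $W$-character coming from $\det\lie{t}$.
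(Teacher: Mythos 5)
Your proposal is correct and is essentially the paper's (implicit) argument, which the paper dismisses as ``Straightforward'': pull back along the map of pairs $(EG\times_G X,\, EG\times_G(G\cdot X^T)) \rightarrow (X/G,\, X^T/W)$, then compose with the isomorphisms of Proposition \ref{yorkysit} and Theorem \ref{cov}. One small caution: $G\cdot X^T \cong G\times_{N_G(T)}X^T$ is not a homeomorphism in general (points of $X^T$ may have stabilizer larger than $N_G(T)$; the natural map is only a rational cohomology equivalence, which is exactly what the cited result from \cite{b1} supplies), but this does not affect your conclusion, since the composite $ET\times_T X^T \rightarrow EG\times_G(G\cdot X^T)\rightarrow X^T/W$ is literally projection onto $X^T$ followed by the quotient map, so the rightmost vertical map is $\alpha \mapsto \alpha\otimes 1$ on $W$-invariants with no character twist.
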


\begin{proof}
Straightforward.
\end{proof}

\subsection{The cohomology of $X_n(\mathcal{C})/G$}

In this section we apply the general strategy laid out in \S \ref{gneyork} to compute the Poincar\'e polynomial for $H(X/G)$ where $X = X_n(\mathcal{C})$ is an $G=SU(2)$-representation variety as defined in \S \ref{resout}. According to \S \ref{almostfinshed} these representation varieties must be isomorphic to one of $X_n^r$, $X_n^s$ or $X_n^r \times G/T$ as $G$-spaces, and we work with the latter notation.

\begin{lem}\label{dumbyorkys}
Let $X = X_n(\mathcal{C})$. Then the Poincar\'e polynomial for $H_G(G \cdot X^T)$ satifies:
\begin{equation}
P_t^G( G \cdot X^T) = \begin{cases}
\dfrac{(1+t)^n}{1-t^2}+\dfrac{(1-t)^n}{1+t^2} &\text{ if $X = X_n(\id)$}\\
\\
\dfrac{(1+t)^n}{1-t^2} &\text{ if $X = X_n(-\id)$}\\
\\
\dfrac{2(1+t)^n}{1-t^2} & \text{if $X \cong X_n^r \times G/T$}.
                     \end{cases}
\end{equation}
\end{lem}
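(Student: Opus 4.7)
The plan is to invoke Proposition \ref{yorkysit} to obtain $H_G(G \cdot X^T) \cong H_T(X^T)^W$, so the calculation reduces to computing $W$-invariants in $H(X^T) \otimes H(BT)$. For $SU(2)$, the Weyl group $W \cong \Z_2$ acts by inversion on $T$, hence by $(-1)^k$ on $H^k(T)$, and by negation on the generator $c_1 \in H^2(BT)$; accordingly, the action on $H^k(T^n) \otimes H^{2l}(BT)$ is by $(-1)^{k+l}$.

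The key step in each case is to identify the orbit structure of $W$ on the components of $X^T$. For $X_n(\pm \id)$, Lemma \ref{twotoris} gives $X^T \cong T^n \sqcup T^n$, and the $W$-action sends $(t_0,\ldots,t_n) \mapsto (t_0^{-1}, \ldots, t_n^{-1})$. Comparing with the component-switching involution $t_0 \mapsto -t_0$ of Lemma \ref{deckd} reduces the question to whether $t_0^{-1} = t_0$ or $t_0^{-1} = -t_0$ at a representative fixed point. In the $+\id$ case, $t_0 = \id$ and $t_0 = -\id$ represent the two components and both satisfy $t_0^{-1} = t_0$, so $W$ preserves each component. In the $-\id$ case, picking $t_0 = J \in T$ with $J^2 = -\id$ gives $J^{-1} = -J$, so the $W$-action coincides with the component-switching involution, and hence $W$ swaps the two components. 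For the generic case, the identification $X_n(\mathcal{C}) \cong G/T \times X_n^r$ gives $X^T = (G/T)^T \times (X_n^r)^T$, i.e.\ four copies of $T^n$; since $W$ exchanges the two $(G/T)^T$-points, the combined action groups the four components into two $W$-orbits of size two, regardless of the parity of $n$.

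The final step is a direct invariant calculation. For a single copy of $T^n$ preserved by $W$, the invariants in bidegree $(k, 2l)$ are exactly those with $k+l$ even, giving Poincar\'e series
$$\frac{1}{2(1-t^4)}\bigl[(1+t)^n(1+t^2) + (1-t)^n(1-t^2)\bigr] = \frac{1}{2}\left[\frac{(1+t)^n}{1-t^2} + \frac{(1-t)^n}{1+t^2}\right].$$
For a pair of $T^n$-components swapped by $W$, the fixed vectors in each $c_1^l$-graded piece are of the form $(a, (-1)^l \sigma(a))$ with $\sigma$ inversion on $H(T^n)$, hence abstractly isomorphic to $H(T^n)$, giving the series $(1+t)^n/(1-t^2)$. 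Assembling: $X_n(\id)$ has two preserved components and contributes $\tfrac{(1+t)^n}{1-t^2} + \tfrac{(1-t)^n}{1+t^2}$; $X_n(-\id)$ has one swapped pair and contributes $\tfrac{(1+t)^n}{1-t^2}$; and the generic case has two swapped pairs and contributes $\tfrac{2(1+t)^n}{1-t^2}$.

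The main obstacle is the bookkeeping of the $W$-action on the components of $X^T$, particularly the verification that the preserve/swap dichotomy is governed by $\epsilon$ and the correct assembly with the $G/T$ factor in the generic case. Once this combinatorial structure is in place, the remaining invariant calculation is elementary.
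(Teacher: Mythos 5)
Your proposal is correct and follows essentially the same route as the paper: reduce via Proposition \ref{yorkysit} to computing $H_T(X^T)^W$, determine whether $W$ preserves or interchanges the components of $X^T\cong T^n\sqcup T^n$ (preserved for $+\id$, swapped for $-\id$, two freely permuted pairs in the generic case), and then take invariants of $H(T^n)\otimes H(BT)$ with $W$ acting by $(-1)^{k+l}$. You merely make explicit two points the paper leaves implicit — the verification of the preserve/swap dichotomy at representative points and the averaging computation of the invariant Poincar\'e series — and both are carried out correctly.
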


\begin{proof}
By Proposition \ref{yorkysit} we know that $$H_G(G \cdot X^T) \cong H_T(X^T)^W \cong (H(X^T)\otimes H(BT))^W.$$ When $X = X_n(\pm \id)$, we have $X^T \cong T^n \cup T^n$ by Lemma \ref{twotoris}. On $X_n(+\id)^T$, $W$ preserves the components and acts diagonally on each copy of $T^n$ via the usual Weyl group action on $T$ so $$H_T(X_n(+\id)^T)^W \cong 2 H_T(T^n/W)$$ from which the formula above follows. On $X_n(-\id)^T$, $W \cong \Z_2$ interchanges the connected components, so $$H_T(X_n(-\id)^T)^W \cong H_T(T^n)$$ from which the formula follows. 

Finally, for $(X_n^r \times G/T)^T = (X_n^{r})^T \times (G/T)^T$ and $W$ transposes $(G/T)^T \cong S^0$, so $$H_G((X_n^r\times G/T)^T)^W \cong H_T((X_n^r)^T)$$  and the formula follows. 
\end{proof}

\begin{prop}\label{jader}
Let $X= X_n(\mathcal{C})$ be the character variety defined in \S \ref{resout} . The Poincar\'e polynomial of the pair $(X/G, X^T/W)$ satisfies:

\begin{multline*}
  P_t(X/G, X^T/W )= \\ \begin{cases}
 t[\dfrac{(1+t)^n}{1-t^2}+\dfrac{(1-t)^n}{1+t^2} - \dfrac{(1+t^3)^n +t^2 (t+t^2)^n}{1-t^4} ] & \text{if $\mathcal{C} = \id$ and n is odd}\\
\\
 t[\dfrac{(1+t)^n}{1-t^2}+\dfrac{(1-t)^n}{1+t^2} - \dfrac{(1+t^3)^n + (t+t^2)^n}{1-t^4} ] & \text{if $\mathcal{C} = \id$ and n is even}\\
\\
t[ \dfrac{(1+t)^n}{1-t^2} - \dfrac{(1+t^3)^n + (t+t^2)^n}{1-t^4} ] & \text{if $\mathcal{C} = -\id$ and n is odd}\\
\\
t[ \dfrac{(1+t)^n}{1-t^2} - \dfrac{(1+t^3)^n + t^2(t+t^2)^n}{1-t^4} ]  & \text{if $\mathcal{C} = -\id$ and n is even}\\
\\
t[  \dfrac{2(1+t)^n}{1-t^2} - \dfrac{(1+t^3)^n + (t+t^2)^n}{1-t^2}]& \text{if $\mathcal{C} \cong G/T$}\\
\end{cases}
\end{multline*}
and $H(X/G, X^T/W)$ has trivial cup product.
\end{prop}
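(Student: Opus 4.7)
The plan is to read off the polynomial identities directly from Proposition \ref{helpful}, combined with Theorem \ref{onepoly} and Lemma \ref{dumbyorkys}, after verifying the equivariant-formality hypothesis in each case.

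First I would check that each $X = X_n(\mathcal{C})$ is a nice $G$-space that is $T$-equivariantly formal. Each $X_n(\mathcal{C})$ is a real algebraic variety and hence a finite CW complex. For $\mathcal{C} = \pm \id$, the formality is Corollary \ref{torefer}. For $\mathcal{C}$ a regular conjugacy class, we have $X_n(\mathcal{C}) \cong G/T \times X_n^r$ with the diagonal action; since $G/T \cong S^2$ is $T$-equivariantly formal (its $T$-fixed set consists of two points, matching $\dim H^*(S^2)$) and $X_n^r$ is $T$-equivariantly formal, the equivariant Künneth formula (Lemma \ref{urbano}) gives formality of the product. This clears the way to apply Proposition \ref{helpful}, which reduces the computation to $P_t(X/G,X^T/W) = t\bigl[P_t^G(G\cdot X^T) - P_t^G(X)\bigr]$ and yields triviality of the cup product on $H(X/G,X^T/W)$ for free.

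Next I would compute $P_t^G(X)$ using equivariant formality: since $X$ is $G$-equivariantly formal (equivalently $T$-equivariantly formal, as noted after Theorem \ref{formalityresult}) and $P_t(BG) = 1/(1-t^4)$, we have $P_t^G(X) = P_t(X)/(1-t^4)$ whenever $\mathcal{C} = \pm \id$. To match $X_n(\pm \id)$ with $X_n^r$ or $X_n^s$, note that $(-\id)^n = \id$ iff $n$ is even, so $X_n(+\id) = X_n^r$ for $n$ even and $X_n^s$ for $n$ odd, while $X_n(-\id)$ is the opposite, and the four corresponding numerators are read off from Theorem \ref{onepoly}. For the remaining case $X \cong G/T \times X_n^r$, the standard identification $H_G(G/T \times Y) \cong H_T(Y)$ together with $T$-formality of $X_n^r$ gives $P_t^G(X) = P_t(X_n^r)/(1-t^2) = \bigl[(1+t^3)^n+(t+t^2)^n\bigr]/(1-t^2)$. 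Finally Lemma \ref{dumbyorkys} supplies $P_t^G(G\cdot X^T)$ in each of the three situations.

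Substituting these two ingredients into the formula of Proposition \ref{helpful} and tracking parity gives the five displayed expressions directly. There is no serious obstacle; the only care needed is bookkeeping of the case split (four central cases indexed by $(\mathcal{C},n \bmod 2)$, plus the regular case), and remembering that in the regular case the denominator is $1-t^2$ rather than $1-t^4$ because one factor of $H^*(BT)$ has been absorbed into $H_G(G/T) \cong H(BT)$.
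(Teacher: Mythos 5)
Your proposal is correct and follows essentially the same route as the paper: invoke equivariant formality (Corollary \ref{torefer}, extended to the regular case), apply Proposition \ref{helpful}, and substitute $P_t^G(X)=P_t(X)/(1-t^4)$ together with the fixed-point data of Lemma \ref{dumbyorkys} and the Betti numbers of Theorem \ref{onepoly}. Your handling of the regular case via $H_G(G/T\times X_n^r)\cong H_T(X_n^r)$ with denominator $1-t^2$ agrees with the paper's uniform formula, since $P_t(G/T\times X_n^r)/(1-t^4)=P_t(X_n^r)/(1-t^2)$.
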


\begin{proof}
Because the spaces $X$ are $G$-equivariantly formal, the Poincar\'e polynomials satisfy:
$$ P_t^G(X) = \frac{P_t(X)}{1-t^4}$$
Simply apply Proposition \ref{helpful}, inputting Poincar\'e polynomials from Lemma \ref{dumbyorkys} and Theorem \ref{onepoly}.
\end{proof}

The Poincar\'e polynomials computed in Proposition \ref{jader} are interesting in their own right. The subspace $X^{irr} := X - G\cdot X^T$ is exactly the set of irreducible representations in $X = Hom_{\mathcal{C}}(\pi_1(\Sigma_n),G)$, so $$H_{cpt} (X^{irr}) \cong H(X/G, X^T/W)$$
where $H_{cpt}(.)$ denotes compactly supported cohomology. The group $G/Z(G) \cong SO(3)$ acts freely on $X^{irr}$ so $X^{irr}/G$ is a manifold whenever $X$ is a manifold.
\begin{cor}\label{hiddencameras}
Let $X = X_n(\mathcal{C})$ for generic $\mathcal{C}$ or let $X = X_n^r$.
The canonical map $\phi \co H_{cpt}(X^{irr}/G) \rightarrow H(X^{irr}/G)$ is zero.
\end{cor}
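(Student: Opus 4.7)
The plan is to exploit two facts: the triviality of the cup product on $H^*_{cpt}(M)$ where $M := X^{irr}/G$, and Poincar\'e-Lefschetz duality for $M$ as an oriented manifold.

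First I would verify that $M$ is a smooth oriented manifold. In both hypotheses, $X$ is a smooth manifold ($X_n^r$ as a regular level set of $\Box_n$ by Proposition \ref{singu}, and $X_n(\mathcal{C}) \cong X_n^r \times G/T$ for generic $\mathcal{C}$ by (\ref{proddduct})). Since $Z(G)$ acts trivially by conjugation, the free action on $X^{irr}$ is really by $SO(3) \cong G/Z(G)$, so $M = X^{irr}/SO(3)$ is a manifold of dimension $d := \dim X - 3$; orientability follows because $X$ is orientable (as a transverse level set inside $G^{n+1}$) and the connected group $SO(3)$ preserves orientation. Then excision applied to the compactification $X/G \supset M$ with closed complement $X^T/W$ identifies $H^*_{cpt}(M) \cong H^*(X/G, X^T/W)$ as graded rings, so by Proposition \ref{helpful} the cup product on $H^*_{cpt}(M)$ is identically zero.

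The heart of the argument is a standard duality computation. The natural map $\phi : H^*_{cpt}(M) \to H^*(M)$ is a morphism of $H^*(M)$-modules: if $\gamma, \delta \in H^*_{cpt}(M)$ are represented by compactly supported cocycles $c_\gamma, c_\delta$, then both $\phi(\gamma) \cup \delta$ (module action of $H^*(M)$ on $H^*_{cpt}(M)$) and $\gamma \cup \delta$ (internal product on $H^*_{cpt}$) are represented by the same compactly supported cocycle $c_\gamma \cup c_\delta$, hence agree in $H^*_{cpt}(M)$. By the triviality established above, $\phi(\gamma) \cup \delta = 0$ for every choice of $\gamma, \delta \in H^*_{cpt}(M)$. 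Poincar\'e-Lefschetz duality now provides a perfect pairing $H^k(M) \otimes H^{d-k}_{cpt}(M) \to H^d_{cpt}(M) \to \mathbb{Q}^{\pi_0(M)}$ (via the trace on each oriented component); if $\phi(\gamma) \neq 0$, this would produce some $\delta \in H^{d-k}_{cpt}(M)$ with $\phi(\gamma) \cup \delta \neq 0$ in $H^d_{cpt}(M)$, contradicting the preceding step. Hence $\phi \equiv 0$.

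The main obstacle I anticipate is not the ring-theoretic manipulation, which is essentially formal, but rather pinning down the geometric preliminaries: that $X/G$ and $X^T/W$ are sufficiently nice for excision $H^*_{cpt}(M) \cong H^*(X/G, X^T/W)$ (as rings, not just as graded vector spaces), and that orientability of each component of $M$ holds so that Poincar\'e-Lefschetz duality applies on the nose. If a component turns out to be non-orientable the argument still goes through over $\mathbb{Q}$ via the orientation double cover, but this requires an extra verification.
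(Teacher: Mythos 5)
Your argument is correct and follows essentially the same route as the paper: identify $H_{cpt}(X^{irr}/G)$ with $H(X/G, X^T/W)$, invoke the triviality of its cup product (Propositions \ref{helpful}/\ref{jader}), and use Poincar\'e duality on the orientable quotient manifold together with the module identity $\phi(\gamma)\cup\delta = \gamma\cup\delta$ to force $\phi = 0$. Your treatment is slightly more careful than the paper's about which duality pairing is used and about the dimension in the generic-$\mathcal{C}$ case, but the substance is identical.
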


\begin{proof}
Because $X^{irr}/G$ is an open subset of the compact space $X/G$, we have $H_{cpt}(X^{irr}/G) \cong H(X/G, X^{red}/G)$.
$X^{irr}/G$ is the quotient of a $3n$ dimensional, orientable smooth manifold by the free action by $G/Z(G)$, so is itself an orientable (noncompact) manifold of dimension $3n-3$. Suppose that $\alpha \in H^*_{cpt}(X^{irr})$ satisfies $\phi(\alpha) \neq 0$.  Then by Poincar\'e duality, there exists $\beta \in H^{3n-3-*}(X^{irr})$ such that $\phi(\alpha) \cup \beta = \alpha \cup \beta \in H^{3n-3}_{cpt}(X^{irr})$ is non zero, which contradicts triviality of the cup product.
\end{proof}

Corollary \ref{hiddencameras} is the analogue of Theorem 0.2.1 of Hausel \cite{hau} in the context of moduli spaces of stable $SU(2)$-Higgs bundles, where it was used as evidence in support of a conjecture that the $L^2$-cohomology of the moduli space is trivial.

\begin{thm}\label{5cases}
Let $X = X_n(\mathcal{C})$ be the character variety defined in \S \ref{resout} . 
The Poincar\'e polynomial of the orbit space $X/G$ is

\begin{multline*}
P_t(X/G)= \\ \begin{cases}
 t[\dfrac{(1+t)^n}{1-t^2}+\dfrac{(1-t)^n}{1+t^2} - \dfrac{(1+t^3)^n +t^2 (t+t^2)^n}{1-t^4} - (1+t)^n-(1-t)^n] +1+t & \text{if $\mathcal{C} = \id$ and n is odd}\\
\\
 t[\dfrac{(1+t)^n}{1-t^2}+\dfrac{(1-t)^n}{1+t^2} - \dfrac{(1+t^3)^n + (t+t^2)^n}{1-t^4}  - (1+t)^n - (1-t)^n] +(1+t)(1 + t^n)& \text{if $\mathcal{C} = \id$ and n is even}\\
\\
t[ \dfrac{(1+t)^n}{1-t^2} - \dfrac{(1+t^3)^n + (t+t^2)^n}{1-t^4} - (1+t)^n] +(1+t) (1 + t^n) & \text{if $\mathcal{C} = -\id$ and n is odd}\\
\\
t[ \dfrac{(1+t)^n}{1-t^2} - \dfrac{(1+t^3)^n + t^2(t+t^2)^n}{1-t^4} -(1+t)^n]  +1+t& \text{if $\mathcal{C} = -\id$ and n is even}\\
\\
t[  \dfrac{2(1+t)^n}{1-t^2} - \dfrac{(1+t^3)^n + (t+t^2)^n}{1-t^2} - 2(1+t)^n]  +(1+t)(2 + 2t^n) & \text{if $\mathcal{C} \cong G/T$}.\\
\end{cases}
\end{multline*}

\end{thm}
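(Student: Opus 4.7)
The plan is to apply the long exact sequence in cohomology of the pair $(X/G, X^T/W)$,
\begin{equation*}
\cdots \to H^{k-1}(X^T/W) \xrightarrow{\delta^{k-1}} H^k(X/G, X^T/W) \to H^k(X/G) \to H^k(X^T/W) \xrightarrow{\delta^k} \cdots,
\end{equation*}
which by a standard exactness count yields
\[
P_t(X/G) \;=\; P_t(X/G, X^T/W) \;+\; P_t(X^T/W) \;-\; (1+t)\,R(t),
\]
where $R(t) = \sum_k \operatorname{rank}(\delta^k)\, t^k$. Proposition~\ref{jader} already supplies $P_t(X/G, X^T/W)$, so the two remaining ingredients are $P_t(X^T/W)$ and the rank polynomial $R(t)$ of the connecting map.

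First I compute $P_t(X^T/W)$ directly in each of the five cases from the description of $X^T$ in \S\ref{locusoffixedpoints} together with the induced action of the Weyl group $W\cong\Z_2$. For $X=X_n(+\id)$, Lemma~\ref{twotoris} gives $X^T\cong T^n\sqcup T^n$, and $W$ preserves each component acting by inversion, producing Poincar\'e polynomial $(1+t)^n+(1-t)^n$. For $X=X_n(-\id)$, $W$ interchanges the two components of $X^T$ (compare Lemma~\ref{dumbyorkys}), so $X^T/W\cong T^n$ and $P_t(X^T/W)=(1+t)^n$. For $X\cong X_n^r\times G/T$, $W$ acts freely by swapping the two $T$-fixed points of $G/T$, and one identifies $X^T/W\cong (X_n^r)^T$ with Poincar\'e polynomial $2(1+t)^n$.

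The substantive step is the evaluation of $R(t)$. Via the morphism of long exact sequences in Lemma~\ref{prtolam}, the non-equivariant map $\delta$ coincides with the restriction of the equivariant connecting map $\delta^T\co H_T(X^T)^W\to H_T^{*+1}(X,X^T)^W$ to the subspace $H(X^T)^W\otimes 1\subset H_T(X^T)^W$. By equivariant formality (Theorem~\ref{formalityresult}) applied as in Proposition~\ref{helpful}, $\delta^T$ is surjective with kernel $\operatorname{im}(i^*\co H_T(X)\to H_T(X^T))^W$, so
\[
\ker\delta \;\cong\; \bigl(H(X^T)^W\otimes 1\bigr)\cap \operatorname{im}(i^*)^W.
\]
Propositions~\ref{razersnik3}, \ref{razersnik} and \ref{razersnik2} give an explicit bigraded description of $\operatorname{im}(i^*)$ as a direct sum of pieces $H^k(T^n)\otimes H^{2l}(BT)$ with inequalities between $k$ and $l$ that distinguish the invariant, regular skew-invariant, and singular skew-invariant parts. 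Extracting the $l=0$ slice and then imposing $W$-invariance (using that $W$ acts on $H(T^n)$ by inversion and on $\tau$-eigenspaces of $H(X^T)$ through the pattern established in \S\ref{relationships1}) isolates $\ker\delta$, hence $R(t)=P_t(X^T/W)-P_t(\ker\delta)$. Substituting these values into the displayed formula and separating cases by the sign $\mathcal{C}=\pm\id$ and the parity of $n$ (which determines whether $X_n(\mathcal{C})$ is $X_n^r$ or $X_n^s$ via the identity $X_n^r=X_n((-\id)^n)$), or by whether $\mathcal{C}\cong G/T$, produces the five stated expressions.

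The main obstacle is purely combinatorial: the interplay of the $W$-parity, the $\tau$-parity and the constraint $k\lessgtr l$ on the bidegree pieces must be tracked simultaneously in each case, and in the $\mathcal{C}\cong G/T$ case one must additionally keep track of the free $W$-swap on $(G/T)^T$ when relating $H(X^T)^W$ to $H((X_n^r)^T)$. Beyond this bookkeeping the argument is a mechanical application of the LES and the preceding propositions.
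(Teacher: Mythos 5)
Your proposal is correct and follows essentially the same route as the paper: the long exact sequence of the pair $(X/G, X^T/W)$ with $P_t(X/G,X^T/W)$ from Proposition \ref{jader}, the Weyl-quotient computation of $P_t(X^T/W)$, and the identification of $\ker\delta$ with the $l=0$ slice of the localization image via Lemma \ref{prtolam} and Propositions \ref{razersnik3}, \ref{razersnik}, \ref{razersnik2} (your rank formula $P_t(X/G)=P_t(X/G,X^T/W)+P_t(X^T/W)-(1+t)R(t)$ is equivalent to the paper's version with $P_t(\ker d)$). The only cosmetic difference is that you carry the $\mathcal{C}\cong G/T$ case through explicitly, whereas the paper suppresses it, and the paper records the final kernel values ($1$ in the singular case, $1+t^n$ in the regular case) that your ``combinatorial bookkeeping'' step would produce.
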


\begin{proof}

To simplify the discussion, we omit the case $\mathcal{C} = G/T$. We use the long exact cohomology sequence of the pair

$$   \rightarrow H^*(X/G, X^T/W) \rightarrow H^*(X/G) \rightarrow H^*(X^T/W) \rightarrow^{d} H^{*+1}(X/G, X^T/W) \rightarrow .$$
By exactness we obtain the equation

\begin{equation}\label{jadbal}
P_t(X/G) = P_t(X/G,X^T/W) - tP_t(X^T/W) +(1+t)P_t( \ker (\phi))
\end{equation}
and we turn to computing the terms on the right of (\ref{jadbal}). 

It follows easily from the description of the $W$ action on $X^T$ in the proof of Lemma \ref{dumbyorkys} that:

\begin{equation}\label{centrede}
P_t(X^T/W) = \begin{cases}
(1+t)^n  & \text{if $\mathcal{C}= -\id$}\\
(1+t)^n + (1-t)^n & \text{if $\mathcal{C} = \id$}.\\
\end{cases}
\end{equation}
By Lemma \ref{prtolam}, the coboundary map $d$ factors through:

\begin{equation}\begin{CD}\label{monkout}
\xymatrix{ H(X/G) \ar[r]^{i^*} \ar[d]^{q}& H^*(X^T/W) \ar[r]^{d} \ar[d] \ar[dr]^{\psi}& H^{*+1}(X/G,X^T/W) \ar[d]_{p}^{\cong} \ar[r]^{j^*}& H(X/G) \\
             H_T(X)^W \ar[r]^{i^*_T} & H_T^*(X^T)^W \ar[r]^{d_T} & H_T^{*+1}(X, X^T)^W &}
\end{CD}\end{equation}
so $\ker( d) = \ker( \psi)$.
It follow from the description of the localization map in \S \ref{chapter6} that 

\begin{align*}
\ker( \psi)  = 
\begin{cases} H^0(X^T/W)_+ & \text{ if $X = X_n^s$} \\
                          H^0(X^T/W)_+ \oplus H^n(X^T/W)_- & \text{ if $X = X_n^r$} \\
                          \end{cases}
                          \end{align*}
and in particular
\begin{equation}\label{getitfud}
P_t(\ker(d)) = P_t(\ker( \phi))  = 
\begin{cases} 1 & \text{ if $X = X_n^s$} \\
                          1 + t^n & \text{ if $X = X_n^r$}. \\
                          \end{cases}
                          \end{equation}
Substituting Proposition \ref{jader}, (\ref{centrede}) and (\ref{getitfud}) into (\ref{jadbal}) gives the answer.
\end{proof}

\begin{cor}\label{highbark}
For the singular representation variety $X_n^s$, the reduced cohomology of the orbit space $\tilde{H}(X_n^s/G) \cong H^{>0}(X_n^s/G)$ has trivial cup product.
\end{cor}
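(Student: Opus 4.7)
The plan is to exploit the long exact sequence of the pair $(X_n^s/G,\,(X_n^s)^T/W)$ together with the triviality of the cup product on the relative cohomology, which is already in hand from Proposition \ref{helpful}. The key observation is that for the singular variety $X_n^s$ (in contrast to $X_n^r$), the restriction map $i^*$ to the fixed-point quotient is zero in positive degrees, so every class in $\tilde{H}(X_n^s/G)$ lifts to relative cohomology; once lifted, its products vanish automatically.

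More concretely, I would first consider the long exact sequence
\begin{equation*}
\cdots \to H^*(X_n^s/G,\,(X_n^s)^T/W) \xrightarrow{j^*} H^*(X_n^s/G) \xrightarrow{i^*} H^*((X_n^s)^T/W) \xrightarrow{d} H^{*+1}(X_n^s/G,\,(X_n^s)^T/W) \to \cdots
\end{equation*}
and read off from equation (\ref{getitfud}) in the proof of Theorem \ref{5cases} that for $X = X_n^s$ the kernel $\ker(d)$ is one-dimensional and lies entirely in degree $0$. By exactness $\mathrm{im}(i^*)=\ker(d)$, so $i^*$ vanishes on $H^{>0}(X_n^s/G)=\tilde{H}(X_n^s/G)$. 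Hence every $\beta\in\tilde{H}(X_n^s/G)$ lies in $\ker(i^*)=\mathrm{im}(j^*)$, and therefore admits a lift $\alpha\in H^*(X_n^s/G,\,(X_n^s)^T/W)$ with $j^*(\alpha)=\beta$.

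Next, $X_n^s$ is $T$-equivariantly formal by Theorem \ref{formalityresult}, so Proposition \ref{helpful} (equivalently the last sentence of Proposition \ref{jader}) guarantees that the cup product on $H^*(X_n^s/G,\,(X_n^s)^T/W)$ is identically zero. Since $j^*$ is a (non-unital) ring homomorphism intertwining the relative and absolute cup products, for any $\beta_1,\beta_2\in\tilde{H}(X_n^s/G)$ with lifts $\alpha_1,\alpha_2$ we get
\begin{equation*}
\beta_1\cup\beta_2 \;=\; j^*(\alpha_1)\cup j^*(\alpha_2) \;=\; j^*(\alpha_1\cup\alpha_2) \;=\; j^*(0) \;=\; 0,
\end{equation*}
which is the desired triviality.

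There is no serious obstacle here: the two main inputs --- surjectivity of $j^*$ onto $\tilde{H}(X_n^s/G)$ and triviality of the relative cup product --- are both already established. The only conceptual point worth noting is why the analogous statement fails for the regular variety $X_n^r$: there the kernel of $d$ contains an additional $H^n(X^T/W)_-$ summand (the $1+t^n$ in (\ref{getitfud})), so some positive-degree classes of $H^*(X_n^r/G)$ do not lift to relative cohomology, and the argument above breaks down precisely as it should.
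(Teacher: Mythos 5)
Your proof is correct and follows essentially the same route as the paper: the paper's own argument also deduces surjectivity of $H(X_n^s/G,(X_n^s)^T/W)\rightarrow \tilde{H}(X_n^s/G)$ from the proof of Theorem \ref{5cases} (i.e.\ from the fact that $\ker(d)$ is concentrated in degree $0$, as in (\ref{getitfud})) and then invokes the triviality of the relative cup product from Proposition \ref{jader}. Your write-up merely makes the surjectivity step and the ring-map property of $j^*$ explicit, which is a faithful elaboration rather than a different proof.
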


\begin{proof}
In the singular case, the proof of Theorem \ref{5cases} shows that the natural map $$ H(X_n^s/G, (X_n^s)^T/W) \rightarrow \tilde{H}(X_n^s/ G)$$
is surjective.  By Proposition $\ref{jader}$, $H(X_n^s/G, (X_n^s)^T/W)$ has trivial cup product, so $\tilde{H}(X_n^s/ G)$ must also.
\end{proof}

\subsection{ $X_n(\epsilon)/G \rightarrow SU(2)^n/SU(2)$ is a cohomological orbit space}

Let $X = X_n^r$ or $X_n^s$. The $\Z_2$ action on $X$ defined in Proposition \ref{somemanyprops}, commutes with the $G$ action, so the quotient $X/G$ inherits a $\Z_2$ action. The projection map $\rho \co X \rightarrow G^n$ is $G$-equivariant, and so descends to a map $\overline{\rho} \co X/G \rightarrow G^n/G$. 

\begin{lem}\label{sweatytee}
Let $X$ denote the representation variety $X_n^r$ or $X_n^s$. The map $\overline{\rho} : X/G \rightarrow G^n/G$ is a strong cohomological $\Z_2$-orbit space, inducing an isomorphism $$ H(G^n/G) \cong H(X/G)_+$$ where $H(X/G)_+$ is the +1 eigenspace of the induced $\Z_2$ action.
\end{lem}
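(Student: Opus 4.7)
The plan is to verify that $(\overline{\rho}\co X/G \to G^n/G, \Z_2)$ satisfies the three defining conditions of a strong cohomological orbit space (Definition \ref{princ}); Proposition \ref{strongweak} will then upgrade this to a cohomological $\Z_2$-orbit space, and Corollary \ref{invt} will yield the isomorphism $H(G^n/G) \cong H(X/G)^{\Z_2} = H(X/G)_+$.

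Conditions (i) and (ii) should be immediate. The map $\rho\co X \to G^n$ is a closed surjection by Proposition \ref{somemanyprops}, and this property passes through the surjective quotients $X \to X/G$ and $G^n \to G^n/G$. The multiplicative $\Z_2$-action on $X$ commutes with the conjugation $G$-action because $-\id \in Z(G)$, so it descends to $X/G$, and $\Z_2$-invariance of $\rho$ yields the required factorization of $\overline{\rho}$ through $(X/G)/\Z_2$.

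The main work is condition (iii). For $[y] \in G^n/G$ represented by $y = (g_1,\ldots,g_n)$, the fiber of the descended map $h\co (X/G)/\Z_2 \to G^n/G$ over $[y]$ identifies, using $G$-equivariance of $\rho$, with the double quotient $\sqrt{\eta}/(G_y \times \Z_2)$, where $\eta = \epsilon \Box_{n-1}(y)^{-1}$, the stabilizer $G_y = C_G(g_1,\ldots,g_n)$ acts by conjugation, and $\Z_2$ acts by multiplication by $-\id$. Lemma \ref{S2S0} splits the analysis into two cases. In the generic case $\eta \neq -\id$, $\sqrt{\eta}$ is a pair of points $\{\pm g_0\}$; since $G_y \subseteq C_G(\eta)$ and $g_0 \in C_G(\eta)$, and either the pair is central ($\eta = \id$) or $C_G(\eta)$ is an abelian maximal torus, the $G_y$-action on $\sqrt{\eta}$ is trivial, and the $\Z_2$-swap collapses the pair to a single point. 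In the exceptional case $\eta = -\id$, $\sqrt{\eta} \cong S^2$ and the multiplicative $\Z_2$-action is the antipodal map, so $\sqrt{\eta}/\Z_2 \cong \R P^2$ already has the rational cohomology of a point; one then shows that quotienting further by $G_y$ preserves this property, splitting into the subcases $G_y$ finite (where Theorem \ref{cov} gives $H(\R P^2/G_y;\Q) \cong H(\R P^2;\Q)^{G_y} = \Q$) and $G_y$ containing a maximal torus $T'$ (where $T'$ acts through $T'/\{\pm\id\} \cong SO(2)$ on $S^2$ by rotation with interval quotient, and any residual discrete action further collapses this to a contractible space).

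The expected main obstacle is the $\eta = -\id$ branch of condition (iii): one must enumerate the possible stabilizer subgroups of $SU(2)$ and verify case by case that each produces a rationally acyclic quotient of $\R P^2$. Once condition (iii) is established, Proposition \ref{strongweak} and Corollary \ref{invt} close the argument with no further work.
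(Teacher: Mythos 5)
Your proposal is correct and follows essentially the same route as the paper: it verifies axioms (i)--(ii) of Definition \ref{princ} directly, identifies the fibre of $h$ over $[y]$ with $\sqrt{\eta}/(G_y \times \Z_2)$, checks rational acyclicity by cases using Lemma \ref{S2S0} together with the possible stabilizers $Z(G)$, a maximal torus, or $G$, and then concludes via Proposition \ref{strongweak} and Corollary \ref{invt}. The only loose phrase is the subcase $\eta = -\id$ with $G_y = G$, where the residual action on the interval is not discrete; but there $G$ acts transitively on $\sqrt{-\id} \cong S^2$ by conjugation, so the quotient is a point and the conclusion stands.
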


\begin{proof}
The first two axioms in Definition \ref{princ} are easily verified. To check axiom 3,  we must show that 
$$ H(\bar{\rho}^{-1}([y])/\Z_2) \cong H(pt) $$ for each $y \in G^n$ representing  $[y] \in G^n/G$. We have

\begin{equation*}
(\overline{\rho})^{-1}([y])/\Z_2 \cong \rho^{-1}(y)/(\Z_2 \times G_y)
\end{equation*}
where $G_y$ is the stabilzer of $y$. By Proposition \ref{somemanyprops}, $\rho^{-1}(y)/\Z_2$ is either a point or $\R P^2$, while $G_y$ must be one of $Z(G)$, $G$ or a maximal torus $T$.

 If $\rho^{-1}(y)/\Z_2$ is a point, then $\rho^{-1}(y)/(\Z_2 \times G_y)$ is also a point. If $\rho^{-1}(y)/\Z_2 \cong \R P^2$ then $\rho^{-1}(y)/(\Z_2 \times G_y)$ is a point, a line segment or $\R P^2$,  depending on $G_y$.  In all cases $h^{-1}(y)$ has trivial cohomology.  
\end{proof}

\subsection{Cup product on orbit space}

We now turn to the cup product structure on the cohomology of the orbit space. In Corollary \ref{highbark}, the product structure on $H(X^s_n/G)$ was shown to be trivial, so we will concentrate on the regular case $X_n^r/G$. 

Considering the diagram (\ref{monkout}), we obtain a short exact sequence of rings

\begin{equation*}
0 \rightarrow \im (j^*) \rightarrow H(X_n^r/G) \rightarrow \ker (d) \rightarrow 0.
\end{equation*}
By (\ref{getitfud}), $R := \ker (d)$ is isomorphic to the truncated polynomial ring $ \Q [\alpha]/(\alpha^2)= Q \{ 1, \alpha\}$, where $\alpha$ lives in degree $n$. 

\begin{lem}\label{strtrek}
The ideal $\im (j^*)$ has trivial cup product. Equivalently, 
\begin{equation}\label{wierd pub}
0 \rightarrow im(j^*) \rightarrow H(X_n^r/G) \rightarrow R \rightarrow 0
\end{equation}
is a Hochschild extension.
\end{lem}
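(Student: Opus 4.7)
The plan is to verify the two conditions defining a Hochschild extension of $R$ by the $R$-module $\im(j^*)$: first that $\im(j^*)$ is a two-sided ideal of $H^*(X_n^r/G)$, and second that $\im(j^*)^2 = 0$. The first condition is automatic from the exact sequence: by exactness $\im(j^*) = \ker(i^*)$, and the kernel of the ring homomorphism $i^* \co H^*(X_n^r/G) \to H^*((X_n^r)^T/W)$ is always an ideal. So the real content is the square-zero statement.

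The key fact to invoke is that the relative cohomology $H^*(X_n^r/G, (X_n^r)^T/W)$ itself carries a graded-commutative cup product (induced by the relative diagonal), and that the connecting map $j^*$ in the long exact sequence of the pair is a ring homomorphism with respect to this product: $j^*(a \smile b) = j^*(a) \smile j^*(b)$ for all $a, b$ in the relative cohomology. This is a standard property of the LES of a pair, nothing specific to our setting.

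With that in hand, I combine it with Proposition \ref{jader}, which already established that the cup product on $H^*(X_n^r/G, (X_n^r)^T/W)$ vanishes identically. For arbitrary $\alpha, \beta \in \im(j^*)$, write $\alpha = j^*(a)$ and $\beta = j^*(b)$; then
\begin{equation*}
\alpha \smile \beta = j^*(a) \smile j^*(b) = j^*(a \smile b) = j^*(0) = 0,
\end{equation*}
so $\im(j^*)^2 = 0$, which is precisely the Hochschild extension property for (\ref{wierd pub}).

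There is no real obstacle here; the argument reduces to multiplicativity of the LES of a pair combined with Proposition \ref{jader}. The only point worth flagging is conceptual rather than technical: the triviality of the cup product on the relative cohomology was itself a consequence of surjectivity of the equivariant boundary map from the diagram (\ref{howyorkto}), so the Hochschild structure ultimately reflects the surjectivity feeding into Proposition \ref{helpful}.
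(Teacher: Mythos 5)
Your proposal is correct and takes essentially the same route as the paper: the paper's proof also pushes the triviality of the relative cup product forward through the multiplicative map $j^*$, only phrasing it via $\phi = j^* \circ p^{-1}\circ d_T$ and the surjectivity of the equivariant boundary $d_T$ (which is exactly what underlies the trivial-product statement in Proposition \ref{jader}). The only cosmetic difference is that you invoke Proposition \ref{jader} on $H(X_n^r/G,(X_n^r)^T/W)$ directly instead of detouring through the isomorphism $p$ to the equivariant relative cohomology.
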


\begin{proof}

We introduce 
\begin{equation}\label{phigh}
\phi := j^* \circ p^{-1} \circ d_T.
\end{equation}
Because $d_T$ is surjective and $p$ is an isomorphism we know that $\im (j^*) = \im ( \phi)$. The image of $d_T$ has trivial cup product, so $\im (\phi) = \im (j^*)$ must as well. 
\end{proof}

Because $\im (j^*)$ has trivial cup product, the cup product action of $H(X/G)$ on $\im(j^*)$ descends to a (bi)module action by $H(X/G)/ \im(j^*) \cong R$. This can be described explicitly:

\begin{prop}\label{sogassy}
The structure of $\im(j^*)$ as a $R$-module satisfies and is determined by:
\begin{equation*}
\alpha \cdot \phi(\beta) = \phi( (\alpha \otimes 1) \cup \beta)
\end{equation*}
where $\alpha$ is the generator of $R \subset H(X^T/W) = H(X^T)^W$ and $\beta \in H_T(X^T)^W = (H(X^T) \otimes H(BT))^W$.
\end{prop}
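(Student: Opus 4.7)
The plan is to chase the product $\tilde{\alpha}\cdot\phi(\beta)$ around the commutative diagram (\ref{monkout}) using the module properties of each arrow. Fix a lift $\tilde{\alpha}\in H(X/G)$ of $\alpha\in R=\ker(d)\subset H(X^T/W)$, so $i^*(\tilde{\alpha})=\alpha$; by Lemma~\ref{strtrek} the element $\tilde{\alpha}\cdot\phi(\beta)$ depends only on $\alpha$, which legitimizes the notation $\alpha\cdot\phi(\beta)$.

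First I would exploit $H(X/G)$-linearity of $j^*$, which holds because the long exact sequence of the pair $(X/G,X^T/W)$ is a sequence of $H(X/G)$-modules. Writing $\phi(\beta)=j^*(p^{-1}(d_T(\beta)))$ and pulling $\tilde{\alpha}$ across $j^*$ gives
\[
\tilde{\alpha}\cdot\phi(\beta) \;=\; j^*\bigl(\tilde{\alpha}\cdot p^{-1}(d_T(\beta))\bigr).
\]
Next I would push the module action across the comparison isomorphism $p$: by naturality, $p$ intertwines the $H(X/G)$-module structure on $H(X/G,X^T/W)$ with the $H_T(X)^W$-module structure on $H_T(X,X^T)^W$ via the map $q$, so $p(\tilde{\alpha}\cdot\gamma)=q(\tilde{\alpha})\cdot p(\gamma)$. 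Then, since the long exact sequence of $(X,X^T)$ is a sequence of $H_T(X)^W$-modules, the connecting map $d_T$ satisfies the derivation property
\[
q(\tilde{\alpha})\cdot d_T(\beta) \;=\; d_T\bigl(i_T^*(q(\tilde{\alpha}))\cup \beta\bigr).
\]
Finally, commutativity of (\ref{monkout}) identifies $i_T^*(q(\tilde{\alpha}))=\psi(i^*(\tilde{\alpha}))=\psi(\alpha)=\alpha\otimes 1$ by the explicit formula for $\psi$ in Lemma~\ref{prtolam}. Chaining the four equalities yields $\tilde{\alpha}\cdot\phi(\beta)=\phi((\alpha\otimes 1)\cup\beta)$, and since $R=\Q\{1,\alpha\}$ with $1$ acting as the identity, this single identity determines the entire $R$-module structure.

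The only step that requires genuine care — and which I expect to be the main obstacle — is the module compatibility of $p$. Concretely, $p$ is the composite of the rational isomorphism $H(X/G,X^T/W)\cong H_G(X,G\cdot X^T)$ (valid because $G$ acts with finite stabilizers on $X-G\cdot X^T$) with the Weyl-averaging isomorphism $H_G\cong H_T^W$; both factors are induced by natural maps of spaces (the collapse $EG\times_G X \to X/G$ and restriction from $G$ to $T$), and the middle vertical arrow $q$ in (\ref{monkout}) is induced by the very same collapse map. Hence the required compatibility $p(\tilde{\alpha}\cdot\gamma)=q(\tilde{\alpha})\cdot p(\gamma)$ is an instance of the general functoriality of cup products under a natural transformation, and the rest of the argument is a purely diagrammatic matter of pushing $\tilde{\alpha}$ through linear maps.
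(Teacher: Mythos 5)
Your argument is correct and is essentially the paper's own proof: lift $\alpha$ to $H(X/G)$, use that the diagram (\ref{monkout}) is a commutative diagram of $H(X/G)$-modules (being induced by maps of pairs), push the action through $j^*$, $p$, and the connecting map $d_T$ via its module property, and identify $i_T^*(q(\tilde{\alpha}))=\alpha\otimes 1$; your extra care about the module compatibility of $p$ just makes explicit what the paper compresses into one sentence. The only quibble is notational: the map you call $\psi$ (the vertical arrow $H(X^T)^W\to H_T(X^T)^W$, $x\mapsto x\otimes 1$, from Lemma \ref{prtolam}) is not the $\psi$ of diagram (\ref{monkout}), which there denotes the diagonal composite into $H_T^{*+1}(X,X^T)^W$.
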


\begin{proof}
Because the diagram (\ref{monkout}) is induced by a map between pairs of spaces, it is in fact a commutative diagram of $H(X/G)$-modules in the usual way.


Let $\alpha' \in H(X/G)$ satisfy $i^*(\alpha') = \alpha$. Then for $\phi (\beta) \in \im (\phi) = \im(j^*)$, we have 
\begin{align*}\alpha \cdot \phi (\beta) = \alpha' \cup \phi(\beta) = \phi( i_T^*(q(\alpha')) \cup \beta)\\ = \phi( (\alpha \otimes 1) \cup \beta)
\end{align*} as stated.
\end{proof}


\begin{rmk}
Proposition \ref{sogassy} and Lemma \ref{strtrek} determine the cup product structure on $H(X/G)$ almost completely. Given a vector space splitting $s\co R \rightarrow H(X/G)$ of the short exact sequence (\ref{wierd pub}) satisfying $s(1) = 1$, the only remaining uncertainty is the value of the product $s(\alpha) \cup s(\alpha)$. Unfortunately, we are unable to determine this. If $H(X/G)$ is to inherit a bigrading from the bigrading on $H(X^T) \otimes H(BT)$, we can show that the Hochschild class of (\ref{wierd pub}) must vanish, so we suspect that it does.
\end{rmk}

\bibliographystyle{plain}
\bibliography{thesis}

\begin{thebibliography}{10}

\bibitem{ab2}
MF~Atiyah and R.~Bott.
\newblock {The Yang-Mills equations over Riemann surfaces}.
\newblock {\em Philosophical Transactions of the Royal Society of London.
  Series A, Mathematical and Physical Sciences}, 308(1505):523--615, 1983.

\bibitem{b1}
T.J. Baird.
\newblock {Cohomology of the space of commuting n--tuples in a compact Lie
  group}.
\newblock {\em Algebraic \& Geometric Topology}, 7:737--754, 2007.

\bibitem{b2}
T.J. Baird.
\newblock {\em {The moduli space of flat G-bundles over a nonorientable
  surface}}.
\newblock Doctoral thesis, University of Toronto, 2008.

\bibitem{bor}
A.~Borel.
\newblock {\em {Seminar on Transformation Groups}}.
\newblock Princeton University Press, 1960.

\bibitem{bred}
G.E. Bredon.
\newblock {Cohomological aspects of transformation groups}.
\newblock {\em Proceedings ofthe Conference on Transformation Groups, New
  Orleans}, pages 245--280, 1967.

\bibitem{br}
G.E. Bredon.
\newblock {\em {Sheaf Theory}}.
\newblock Springer, 1997.

\bibitem{clm}
E.~Cappell, R.~Lee, and EY~Miller.
\newblock {The action of the Torelli group on the homology of representation
  spaces is nontrivial}.
\newblock {\em Topology}, 39(4):851--871, 2000.

\bibitem{du}
A.~Durfee.
\newblock {Neighbourhoods of algebraic sets}.
\newblock {\em Trans. Amer. Math. Soc}, 276(2), 1983.

\bibitem{hau}
T.~Hausel.
\newblock {Geometry of the moduli space of Higgs bundles}.
\newblock {\em Arxiv preprint math.AG/0107040}, 2001.

\bibitem{hl}
N.K. Ho and C.C.M. Liu.
\newblock {Connected components of the space of surface group representations
  II}.
\newblock {\em Arxiv preprint math.SG/0406069}, 2004.

\bibitem{hl2}
N.K. Ho and C.C.M. Liu.
\newblock {Yang-Mills connections on nonorientable surfaces}.
\newblock {\em Arxiv preprint math.SG/0605587}, 2006.

\bibitem{mi}
J.W. Milnor.
\newblock {\em {Topology from the Differentiable Viewpoint}}.
\newblock Princeton University Press, 1997.

\bibitem{ps}
R.~Palais and T.~Stewart.
\newblock {Deformations of compact differentiable transformation groups}.
\newblock {\em Amer. J. Math}, 82:935--937, 1960.

\bibitem{ram}
D.A. Ramras.
\newblock {The stable moduli space of flat connections over a surface}.
\newblock {\em Arxiv preprint arXiv:0810.1784}, 2008.

\end{thebibliography}

\end{document}